\documentclass[12pt,reqno]{amsart}
\usepackage{amssymb}
\usepackage{amsmath}
\usepackage{enumitem}
\usepackage{mathrsfs}
\usepackage[all]{xy}
\setcounter{tocdepth}{1}
\usepackage{hyperref}
\usepackage{marginnote}

\usepackage{tikz-cd}

\usepackage{stmaryrd}

\usepackage[margin=1.25in]{geometry}

\RequirePackage{mathrsfs} 

\newtheorem{theorem}{Theorem}[section]
\newtheorem{lemma}[theorem]{Lemma}
\newtheorem{proposition}[theorem]{Proposition}
\newtheorem{corollary}[theorem]{Corollary}
\newtheorem{conjecture}[theorem]{Conjecture}

\theoremstyle{definition}
\newtheorem*{ack}{Acknowledgements}
\newtheorem*{con}{Conventions}
\newtheorem{remark}[theorem]{Remark}
\newtheorem{example}[theorem]{Example}
\newtheorem{definition}[theorem]{Definition}

\numberwithin{equation}{section} \numberwithin{figure}{section}

\DeclareMathOperator{\Pic}{Pic}
\DeclareMathOperator{\PicS}{\mathbf{Pic}}

\DeclareMathOperator{\Spec}{Spec}

\DeclareMathOperator{\Br}{Br}

\let\Im\relax
\DeclareMathOperator{\Im}{Im}

\DeclareMathOperator{\Hilb}{Hilb}
\DeclareMathOperator{\chr}{char}

\newcommand{\GL}{\mathrm{GL}}
\newcommand{\PGL}{\mathrm{PGL}}

\newcommand{\Qbar}{\overline{\QQ}}

\newcommand\PP{\mathbb{P}}

\newcommand\ZZ{\mathbb{Z}}

\newcommand\QQ{\mathbb{Q}}

\newcommand\OO{\mathcal{O}}
\renewcommand\H{\mathrm{H}}

\renewcommand\P{\mathbb{P}}

\newcommand\Z{\mathbb{Z}}
\newcommand\N{\mathbb{N}}
\newcommand\Q{\mathbb{Q}}

\renewcommand\O{\mathcal{O}}

\usepackage{color}

\definecolor{orange}{rgb}{1,0.5,0}

\newcommand{\dan}[1]{{\color{blue} \sf $\clubsuit\clubsuit\clubsuit$ Dan: [#1]}}
\newcommand{\ari}[1]{{\color{red} \sf $\clubsuit\clubsuit\clubsuit$ Ari: [#1]}}

\author{Ariyan Javanpeykar}
\address{Ariyan Javanpeykar \\
	Institut f\"{u}r Mathematik\\
	Johannes Gutenberg-Universit\"{a}t Mainz\\
	Staudingerweg 9, 55099 Mainz\\
	Germany.}
\email{peykar@uni-mainz.de}

\author{Daniel Loughran}
\address{Daniel Loughran \\
Department of Mathematical Sciences \\
University of Bath \\
Claverton Down \\
Bath \\
BA2 7AY \\
UK.}
\urladdr{https://sites.google.com/site/danielloughran/}

\author{Siddharth Mathur}
\address{Siddharth Mathur \\
Mathematisches Institut\\
Heinrich-Heine-Universit\"at\\40204 D\"usseldorf, Germany.}
\urladdr{https://sites.google.com/view/sidmathur/home}

\subjclass[2010]
{14G05,  
(11G35,  
14D23.)   
}

\title{Good reduction and cyclic covers}

\begin{document}
	
\begin{abstract}
	We prove finiteness results for sets of varieties over number fields
	with good reduction outside a given finite set of places
	using cyclic covers. We obtain a version of
	the Shafarevich conjecture for weighted projective surfaces, 
	double covers of abelian varieties, 
	and reduce the Shafarevich conjecture for hypersurfaces to the case
	of hypersurfaces of high dimension.	These are special cases of a general
	set-up for integral points on moduli stacks of cyclic covers, and our arithmetic
	results are achieved via a version of the Chevalley--Weil theorem for stacks.
\end{abstract}

	\maketitle
	\tableofcontents

	\thispagestyle{empty}

	\section{Introduction}

The Shafarevich conjecture \cite{Shaf1962}, as proved by Faltings \cite{Faltings2}, states that given a number field $K$, a finite set $S$ of places of $K$, and an integer $g\geq 2$, there are only finitely many smooth projective curves of genus $g$ over $K$ with good reduction outside of $S$. In this paper we prove analogues of this property for other classes of varieties using a new technique based upon cyclic covers and their moduli stacks.

As motivation for our method, consider hypersurfaces of degree $r$ in projective space. Our first result propagates the Shafarevich property from large dimension to small dimension. We achieve this using cyclic covers, specifically given a smooth hypersurface 
$$X: \quad f(x)=0 \quad \subset \PP^{n+1}_K,$$
of degree $r$, we want to consider the hypersurface
\begin{equation} \label{eqn:cyclic_cover}
	x_{n+2}^{r} = f(x) \quad \subset \PP^{n+2}_K
\end{equation}
and then interpolate between these two types of objects. However, one immediately runs into problems, since the hypersurface $X$ is not canonically defined by a fixed equation $f$: one can choose a different scalar multiple of $f$ without changing $X$. Over non-algebraically closed fields, these different choices yield non-isomorphic cyclic covers so there is no uniform way to associate a cyclic cover to a hypersurface. To overcome this ambiguity we consider all cyclic covers at once. This quickly leads one to study these objects in moduli and so we proceed by considering stacks of hypersurfaces and cyclic covers. The moduli stack framework is  well suited to Shafarevich-type problems as  smooth hypersurfaces of degree $r$ in $\mathbb{P}^{n+1}_{\OO_{K,S}}$ give rise to $\OO_{K,S}$-integral points on the moduli stack of hypersurfaces of degree $r$ in $\mathbb{P}^{n+1}_{\OO_{K,S}}$.

The new geometric input in the paper is a detailed study of these stacks. We create a new general framework that both clarifies and remedies the above ambiguity, and should be of independent interest. We work in the general setup of divisorial pairs  which builds on the case of cyclic covers of projective spaces treated in \cite{ArsieVistoli}. Our main geometric result is that the morphism of stacks given by associating to a cyclic cover its branch locus is \emph{proper \'etale} (Theorem \ref{thm:properetale}). In fact, in the case of hypersurfaces in $\mathbb{P}^n$, we even show that the associated stack of cyclic covers is a $\mu_r$-gerbe (Example \ref{ex:cyclicprojectivespace}). This allows us to perform a descent, \`{a} la Fermat, and get finiteness results through an application of a stacky version of the Chevalley--Weil theorem proved by the first-named authors in \cite{JLalg}. 

We now explain the applications of our method obtained in this paper.

\raggedbottom 

\subsection{Hypersurfaces in projective space}
Our first application concerns the following conjecture,
originally proposed in \cite[Conj.~1.4]{JL}.

	\begin{conjecture}[Shafarevich conjecture  for projective hypersurfaces]\label{conji} 
		Let $r\geq 2$ and $n\geq 1$. Then for all number fields $K$ and finite sets $S$ of finite places of $K$, the set of $\OO_{K,S}$-linear isomorphism classes of smooth hypersurfaces of degree $r$ in $\mathbb{P}^{n+1}_{\OO_{K,S}}$ is finite.
	\end{conjecture}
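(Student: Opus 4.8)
The plan is to deduce Conjecture~\ref{conji} from the case of hypersurfaces of small degree by propagating finiteness of integral points down a tower of cyclic covers, using a Chevalley--Weil theorem for moduli stacks. For fixed $r$ and $n$ write $\mathcal{H}=\mathcal{H}_{r,n}$ for the moduli stack of smooth degree-$r$ hypersurfaces in $\PP^{n+1}$; it is a smooth finite-type algebraic stack over $\ZZ[1/r]$, namely the quotient of the smooth locus in $\PP\bigl(\H^0(\PP^{n+1},\OO(r))\bigr)$ by $\PGL_{n+2}$. The target is that $\mathcal{H}(\OO_{K,S})$ is finite for all $K$ and all $S$; a preliminary step is to match this with the count of $\OO_{K,S}$-linear isomorphism classes of hypersurfaces, the two differing by a $\Gm$- or $\mu_r$-gerbe whose effect is controlled cohomologically (here one uses that, away from a short list of pairs $(r,n)$, the polarisation $\OO_X(1)$ is recovered from the canonical bundle of $X$ up to finite ambiguity).

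The mechanism is the cyclic cover construction, applied at the level of moduli. To a smooth $X=V(f)\subset\PP^{n+1}$ and a prime $p\mid r$ one attaches the $\mu_p$-cover $Z_p(X)=V(w^p-f)\subset\PP(1^{n+2},r/p)$: a smooth weighted hypersurface of degree $r$ carrying a $\mu_p$-action whose ramification locus recovers $X$. This is the hypersurface incarnation of the cyclic-cover formalism of the paper, and it upgrades to a morphism of moduli stacks $\mathcal{Z}_p\to\mathcal{H}_{r,n}$ which is a gerbe banded by $\mu_p$ (over $\ZZ[1/r]$ -- this is the only place $r$ must be inverted, the primes above $r$ being absorbed into $S$). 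Moreover $\mathcal{Z}_p$ is a locally closed substack of the moduli stack $\mathcal{H}'$ of smooth degree-$r$ weighted hypersurfaces in $\PP(1^{n+2},r/p)$, whose members have canonical degree $r-(n+2)-r/p$, strictly smaller than the canonical degree $r-(n+2)$ of $X$.

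Granting this, the argument is a descent. If $X$ is defined over $\OO_{K,S}$ and has good reduction outside $S$, then $Z_p(X)$ has good reduction outside $S\cup\{v\mid r\}$; the Chevalley--Weil theorem for stacks then yields a finite extension $L/K$ and a finite $T\supseteq S$, depending only on $p,r,n,K,S$, such that every point of $\mathcal{H}_{r,n}(\OO_{K,S})$ is the image of a point of $\mathcal{Z}_p(\OO_{L,T})$, and the latter set injects into $\mathcal{H}'(\OO_{L,T})$. Since each step strictly lowers the canonical degree, after finitely many iterations one reaches weighted hypersurfaces of small (canonical) degree -- the Fano and Calabi--Yau range -- for which Conjecture~\ref{conji}, in its weighted-projective form, is the remaining input. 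Because Chevalley--Weil enlarges the pair $(K,S)$ at every stage, the base case is needed \emph{uniformly over all number fields}, which is exactly how the conjecture is phrased, so the logic closes up.

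The crux is the base case. After the reduction one must still prove Shafarevich-type finiteness for smooth (weighted) hypersurfaces of small degree: quadrics are classical, and the curve and K3 cases follow from Faltings and Andr\'e, but the general Fano and Calabi--Yau cases -- cubic hypersurfaces being the first genuinely open instance -- are out of reach and belong to the Lang--Vojta circle of conjectures. A secondary, more technical obstacle is the stack-theoretic bookkeeping: verifying that the cyclic-cover morphisms are honest $\mu_p$-gerbes rather than merely quasi-finite maps, tracking the gerbe relating $\mathcal{H}$ to genuine hypersurfaces when lifting integral points, and analysing cyclic covers at primes dividing $r$ so that good reduction is truly preserved in both directions along the whole tower.
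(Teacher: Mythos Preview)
The statement is Conjecture~\ref{conji}, which the paper does \emph{not} prove; it is left open. You acknowledge this yourself (``the general Fano and Calabi--Yau cases \dots\ are out of reach''), so your proposal is not a proof but a reduction strategy, and the right comparison is with the paper's own reduction result, Theorem~\ref{thm:Shaf_via_Fano}.

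Structurally your strategy and the paper's coincide: the stack of cyclic covers is a $\mu$-gerbe over the stack of branch hypersurfaces (so stacky Chevalley--Weil transfers arithmetic hyperbolicity across it), and it maps quasi-finitely into another hypersurface stack (so arithmetic hyperbolicity of the target pulls back). The difference is the target. You take a prime $p\mid r$ and form the degree-$p$ cover $V(w^p-f)\subset\PP(1^{n+2},r/p)$, landing in a \emph{weighted} projective space. The paper takes the full degree-$r$ cover $V(x_{n+2}^r-f)\subset\PP^{n+2}$, landing in \emph{ordinary} projective space; the relevant map $\mathcal{H}^{\mathrm{sm}}(N,r,1)\to\mathcal{C}_{(r;N)}$ is shown to be unramified via a tangent-space computation (Proposition~\ref{unramifiedmapofstacks}), and one then descends from $\mathcal{C}_{(r;N)}$ to $\mathcal{C}_{(r;N-1)}$ (Proposition~\ref{prop:shaf_conjec}).

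The paper's route has the practical advantage that the iteration stays inside ordinary projective spaces throughout, so the open base case is precisely Conjecture~\ref{conji} for Fano hypersurfaces. Your route leaves ordinary projective space after one step, so to iterate you need the entire framework (gerbe structure, unramified/locally-closed embedding, separatedness) redeveloped for weighted hypersurfaces, and your base case becomes Shafarevich for \emph{weighted} Fano hypersurfaces---a priori a strictly larger open problem. Your assertion that $\mathcal{Z}_p\hookrightarrow\mathcal{H}'$ is a locally closed immersion also needs an argument; the paper's analogue (Proposition~\ref{unramifiedmapofstacks}) is a genuine deformation-theoretic computation, not a formality.
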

		
	Here by a \emph{linear isomorphism}, we mean an isomorphism induced by an automorphism of the ambient projective space.
For $n=1$ the conjecture follows from Faltings's finiteness theorem \cite{Faltings2} (one needs to be slightly careful with conics and cubics; see Proposition \ref{prop:plane_curves} for details). In higher dimensions, Conjecture \ref{conji} is known in the following cases: quadrics \cite[Prop.~5.1]{JL}, cubic surfaces \cite[Thm.~4.5]{Scholl}, quartic surfaces \cite[Cor.~1.3.2]{Andre}, sextic surfaces \cite[Thm.~1.3]{JLFano}, and cubic and quartic threefolds \cite[Thm.~1.1]{JL}. This conjecture has various equivalent reformulations in terms of existence of hypersurfaces with good reduction outside of a given finite set of finite places, in a closer vein to Shafarevich's original conjecture (see Proposition \ref{prop:Rewriting}).

Our first result says that to prove the conjecture, one may assume that $n$ is arbitrarily large with respect to $r$.

\begin{theorem}\label{thm:Shaf_via_Fano}
	Let $r \geq 2$ and $ n \geq 1$. Suppose that the Shafarevich conjecture holds for $n$-dimensional smooth hypersurfaces of degree $r$.
	Then it holds for $m$-dimensional smooth hypersurfaces of degree $r$
	for all $1\leq m \leq n$.
\end{theorem}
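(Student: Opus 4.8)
\emph{Strategy.} The plan is to raise the dimension by taking iterated cyclic covers, and to transfer finiteness back down along a construction with finite fibres. Since a single cyclic cover raises the dimension by exactly one, it suffices to prove the implication: if the Shafarevich conjecture holds for $(d+1)$-dimensional smooth hypersurfaces of degree $r$ (over all number fields and all finite $S$), then it holds for $d$-dimensional ones; applying this for $d = n-1, n-2, \dots, m$ then yields the theorem. We may assume $r \geq 3$, since for $r = 2$ both the hypothesis and the conclusion hold unconditionally (quadrics, \cite[Prop.~5.1]{JL}).

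\emph{The cyclic cover.} Fix $K$, $S$, and a smooth hypersurface $X = \{f = 0\} \subset \mathbb{P}^{d+1}_{\OO_{K,S}}$ of degree $r$. Introducing a new variable $w$ and enlarging $S$ to $S' := S \cup \{\mathfrak{p} : \mathfrak{p} \mid r\}$, set
\[
  Y \;:=\; \{\, w^{r} - f(x_0,\dots,x_{d+1}) = 0 \,\} \;\subset\; \mathbb{P}^{d+2}_{\OO_{K,S'}},
\]
a hypersurface of degree $r$ and dimension $d+1$. By the Jacobian criterion, using $r \in \OO_{K,S'}^{\times}$ and the Euler relation, $Y$ is smooth over $\OO_{K,S'}$ \emph{if and only if} $X$ is smooth over $\OO_{K,S}$; indeed the projection $Y \to \mathbb{P}^{d+1}$ away from $[0:\dots:0:1] \notin Y$ is the degree-$r$ cyclic cover branched along $X$. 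It carries the faithful linear $\mu_r$-action $[x:w] \mapsto [x : \zeta w]$, with quotient $\mathbb{P}^{d+1}$ and fixed subscheme $Y^{\mu_r} = Y \cap \{w = 0\} = X$.

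\emph{Finite fibres.} The crucial point is that $X \mapsto Y$ induces a map with finite fibres on linear isomorphism classes. Fix $Y$ up to $\OO_{K,S'}$-linear isomorphism and suppose $X_1, X_2$ have cyclic covers linearly isomorphic to $Y$. Any linear isomorphism between the covers $\{w^r = f_i\}$ carries the distinguished order-$r$ automorphism $[x:w]\mapsto[x:\zeta w]$ of one to some order-$r$ element $\tau$ of the linear automorphism group of the other, and then $X_i$ is recovered, up to $\overline{K}$-linear isomorphism, as the intersection of $Y$ with the codimension-one eigenspace component of the fixed locus of $\tau$. Now $\Aut^{\mathrm{lin}}(Y_{\overline{K}})$ is \emph{finite}: since $Y$ is a smooth hypersurface of degree $r \geq 3$, the point $[Y]$ is stable for the $\PGL_{d+3}$-action on $\mathbb{P}(\mathrm{H}^0(\mathbb{P}^{d+2}, \OO(r)))$, and the stabiliser of a stable point is finite (equivalently, Matsumura--Monsky). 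Hence there are only finitely many such $\tau$, finitely many resulting slices, and therefore finitely many possibilities for $X$ up to $\overline{K}$-linear isomorphism. Passing from $\overline{K}$ to $K$ costs only a finite factor by Hermite--Minkowski applied to the finite group $\Aut(X)$ (good reduction outside $S$ bounds the ramification of the twisting cocycle), and passing from $K$-forms to $\OO_{K,S}$-linear models, together with the residual ambiguity of rescaling $f$ by a unit, is controlled by the finite groups $\OO_{K,S'}^{\times}/(\OO_{K,S'}^{\times})^{r}$ and the \'etale cohomology of the finitely many punctured local rings at places of $S'\setminus S$ with coefficients in $\Aut^{\mathrm{lin}}(Y)$.

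\emph{Conclusion and main difficulty.} Granting this: if the Shafarevich conjecture holds in dimension $d+1$ and degree $r$, apply it over $(K,S')$ to obtain finitely many classes $[Y]$; pulling back along the cyclic cover then gives finitely many classes $[X]$ in dimension $d$ over $(K,S)$, and iterating from $d=n-1$ down to $d=m$ proves the theorem. The geometric inputs are soft --- cyclic covers preserve smoothness and degree, and smooth hypersurfaces of degree $\geq 3$ have finite linear automorphism group --- so I expect the real obstacle to be the arithmetic bookkeeping in the ``finite fibres'' step: reconciling $\overline{K}$-, $K$-, and $\OO_{K,S}$-linear isomorphism classes and their integral models. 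This is exactly the descent that the present paper packages into a Chevalley--Weil theorem for (stacks of) cyclic covers, and invoking that machinery should streamline the argument considerably.
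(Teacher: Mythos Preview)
Your strategy is the same as the paper's: reduce to the one-step implication, take the cyclic cover $Y=\{w^r=f\}$, and argue that ``$X\mapsto Y$'' has finite fibres so that finiteness of $Y$'s pulls back. The paper organises this via moduli stacks: it translates the Shafarevich conjecture into arithmetic hyperbolicity of $\mathcal{C}_{(r;n),k}$ (Proposition~\ref{prop:Rewriting}), then uses the diagram
\[
\mathcal{C}_{(r;N-1),k}\xleftarrow{\text{proper \'etale}}\mathcal{H}^{\mathrm{sm}}(N,r,1)_k\xrightarrow{\text{quasi-finite}}\mathcal{C}_{(r;N),k}
\]
together with the stacky Chevalley--Weil theorem (Theorem~\ref{thm:chev_weil}) and Lemma~\ref{lem:qf}. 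Two points of comparison. First, your ``finite fibres'' argument via finiteness of $\Aut^{\mathrm{lin}}(Y_{\bar K})$ is more elementary than the paper's route: the paper instead shows that the right-hand map above is \emph{unramified} by a tangent-space computation (Proposition~\ref{unramifiedmapofstacks}), which is stronger than needed but of independent interest. Your Matsumura--Monsky argument would already give quasi-finiteness and would suffice for the application. Second, your descent from $\bar K$ to $K$ to $\OO_{K,S}$ is where the real work hides, as you yourself note. The sketch you give (Hermite--Minkowski, $\OO_{K,S'}^\times/(\OO_{K,S'}^\times)^r$, local cohomology at $S'\setminus S$) is correct in spirit but not rigorous as written: one must know, for instance, that two $\OO_{K,S}$-hypersurfaces which become linearly isomorphic over $\bar K$ actually differ by a class in $\H^1_{\et}(\OO_{K,S},\underline{\Aut}^{\mathrm{lin}})$, which needs the Isom-schemes to be finite \'etale over $\OO_{K,S}$---a consequence of $\mathcal{C}_{(r;n)}$ being separated Deligne--Mumford for $r\geq 3$. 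This is precisely what the paper's twisting lemma (Theorem~\ref{thm:twists}) and the $\mu_r$-gerbe structure of the left-hand map package cleanly; carrying it out by hand is possible but would essentially reprove those results.
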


  We emphasize that, in Theorem \ref{thm:Shaf_via_Fano}, we assume, for \emph{every} number field $K$ and every finite set of finite places $S$ of $K$, the finiteness of $\OO_{K,S}$-linear isomorphism classes smooth hypersurfaces of degree $r$ in $\mathbb{P}^{n+1}_{\OO_{K,S}}$.  

In particular, it suffices to consider the case of Fano hypersurfaces (where $r \leq n$).

\begin{corollary}
	The Shafarevich conjecture for all smooth hypersurfaces follows from the Shafarevich conjecture for all smooth Fano hypersurfaces.
\end{corollary}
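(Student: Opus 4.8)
The plan is to deduce this as a formal consequence of Theorem~\ref{thm:Shaf_via_Fano}, using only the elementary fact that a smooth hypersurface $X$ of degree $r$ in $\PP^{n+1}$ is Fano once $n \geq r$: by adjunction its canonical bundle is $\OO_X(r-n-2)$, which is anti-ample precisely when $r - n - 2 < 0$, and this holds as soon as $n\geq r$ (in fact already for $n \geq r-1$).

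First I would fix $r \geq 2$ and $m \geq 1$, and set $n := \max(m,r)$, so that $m \leq n$ and $r \leq n$. Every smooth hypersurface of degree $r$ in $\PP^{n+1}$ is then a Fano variety by the remark above, so the hypothesis---the Shafarevich conjecture for smooth Fano hypersurfaces---applies and yields the Shafarevich conjecture for $n$-dimensional smooth hypersurfaces of degree $r$.

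Next I would invoke Theorem~\ref{thm:Shaf_via_Fano} with this value of $n$: since the conjecture holds for $n$-dimensional smooth hypersurfaces of degree $r$, it holds for $m'$-dimensional smooth hypersurfaces of degree $r$ for every $1 \leq m' \leq n$, and in particular for $m' = m$. As $r \geq 2$ and $m \geq 1$ were arbitrary, this establishes the Shafarevich conjecture for all smooth hypersurfaces.

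I do not anticipate any genuine obstacle here: all of the mathematical content is contained in Theorem~\ref{thm:Shaf_via_Fano}, and the corollary is purely formal. The only points needing (minimal) care are the numerics of the adjunction formula that pin down the Fano range, and the observation that the conclusion of Theorem~\ref{thm:Shaf_via_Fano} permits the source dimension $n$ to be chosen arbitrarily large relative to $r$ --- which is exactly what allows one to descend from the Fano case to every dimension $m$.
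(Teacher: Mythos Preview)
Your proposal is correct and matches the paper's approach: the corollary is stated without proof in the paper, being an immediate formal consequence of Theorem~\ref{thm:Shaf_via_Fano} together with the adjunction computation $\omega_X \cong \OO_X(r-n-2)$, exactly as you describe.
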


 This quite surprised the authors, as often in arithmetic geometry the most difficult case is that of varieties of general type.  
In fact, as one may take $n$ to be much larger than $r$, one need only consider unirational hypersurfaces by \cite{HMP}.

\subsection{Hypersurfaces in weighted projective space}

We expect that a similar statement to Conjecture \ref{conji} should hold for hypersurfaces in a weighted projective space. Our next theorem proves this for some such surfaces. 
	
\begin{theorem}\label{thm:weighted}  
		Let $r\geq 2$  be an integer   and let $A\subset \mathbb {C}$ be an integrally closed $\ZZ$-finitely generated subring such that $2\in A^\times$. Then the set of $A$-isomorphism classes of smooth surfaces of degree $2r$ in $\PP(1,1,1,r)_A$ is finite.
\end{theorem}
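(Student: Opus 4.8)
The plan is to reduce, using $2 \in A^\times$, to the case of double covers of $\PP^2_A$ branched along a smooth plane curve of degree $2r$, and then to deduce finiteness from the Shafarevich conjecture for plane curves, from the properness and étaleness of the branch morphism, and from the stacky Chevalley--Weil theorem. Concretely, write a surface $X$ of degree $2r$ in $\PP(1,1,1,r)_A$, in coordinates $x_0,x_1,x_2$ of weight $1$ and $w$ of weight $r$, as $\{a w^2 + b(\underline x)\,w + c(\underline x) = 0\}$ with $a \in A$ and $b, c$ forms of degrees $r$ and $2r$ in $x_0,x_1,x_2$. The first step — and the crux — is that smoothness of $X$ over $A$ forces $a \in A^\times$. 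Suppose $a$ lies in a maximal ideal $\mathfrak p$. In the affine cone over the fibre $X_{\kappa(\mathfrak p)}$ inside $\AA^4_{\kappa(\mathfrak p)}$, the point $(0,0,0,1)$ lies on the cone (since $a \equiv 0$ and $b,c$ vanish at $\underline x = 0$), and the gradient of the defining equation there equals $(0,0,0,2a) \equiv 0$, because $\deg b = r \geq 2$ and $\deg c = 2r \geq 4$ force $b$ and all $\partial_{x_i}b$, $\partial_{x_i}c$ to vanish at $\underline x = 0$. Hence $X_{\kappa(\mathfrak p)}$ is singular at the point $[0\!:\!0\!:\!0\!:\!1]$ — which is also the singular point of $\PP(1,1,1,r)$ — contradicting smoothness of $X$ over $A$. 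So $a \in A^\times$, and since $2 \in A^\times$ we may complete the square by an $A$-linear automorphism of $\PP(1,1,1,r)_A$, bringing $X$ into the form $w^2 = f(\underline x)$ with $\deg f = 2r$. This realises $X$ as the double cover of $\PP^2_A$ branched along $D := \{f = 0\}$, and (again using $2 \in A^\times$) $X$ is smooth over $A$ if and only if $D$ is a smooth plane curve of degree $2r$ over $A$. Thus, up to isomorphism, the surfaces in question are exactly the $A$-points of the moduli stack $\mathcal C$ of double covers of $\PP^2$ branched along smooth plane curves of degree $2r$; in the language of \S\ref{sec:stacks} (building on \cite{ArsieVistoli}), $\mathcal C$ is the stack of cyclic covers of degree two attached to the moduli stack $\mathcal P$ of pairs $(\PP^2, D)$ with $D$ a smooth plane curve of degree $2r$.

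It then suffices to show that $\mathcal C(A)$ is finite. The set $\mathcal P(A)$ maps to the set of $A$-isomorphism classes of smooth plane curves of degree $2r$, which is finite by the Shafarevich conjecture for plane curves (Proposition~\ref{prop:plane_curves}): since $2r \geq 4$, the exceptional cases of conics and cubics do not occur, and the statement over an arbitrary $\ZZ$-finitely generated integrally closed subring of $\CC$ follows from the case of rings of $S$-integers by spreading out. By Example~\ref{ex:cyclicprojectivespace}, the branch morphism $\mathcal C \to \mathcal P$ is a $\mu_2$-gerbe; as $2 \in A^\times$, the group scheme $\mu_2$ is étale, so this morphism is proper and étale (an instance of Theorem~\ref{thm:properetale}). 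Applying the Chevalley--Weil theorem for stacks of \cite{JLalg} to this proper étale morphism propagates finiteness of $\mathcal P(A)$ to finiteness of $\mathcal C(A)$, completing the proof. (Concretely, once $D$ is fixed, the double covers of $\PP^2_A$ branched along it form a torsor under $H^1_{\et}(A,\mu_2)$, a finite group — its size controlled by $A^\times/(A^\times)^2$, finite because $A^\times$ is finitely generated, together with the $2$-torsion of $\Pic A$.)

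The main obstacle is the passage to the normal form $w^2 = f(\underline x)$, and within it the claim $a \in A^\times$; this is precisely where both hypotheses, $r \geq 2$ and $2 \in A^\times$, are essential (for $r = 1$ the gradient computation above fails, and without $2 \in A^\times$ one cannot complete the square). Once the normal form is established, everything else is supplied by results already in hand — Theorem~\ref{thm:properetale} and Example~\ref{ex:cyclicprojectivespace} for the geometry of the stacks, \cite{JL} for plane curves, and \cite{JLalg} for Chevalley--Weil — the only downstream care being to ensure that these inputs are available over a general $\ZZ$-finitely generated integrally closed subring of $\CC$ rather than only over rings of $S$-integers in number fields.
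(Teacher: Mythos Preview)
Your approach is essentially the paper's: reduce a smooth degree-$2r$ surface in $\PP(1,1,1,r)_A$ to a double cover of $\PP^2_A$ branched along a smooth plane curve of degree $2r$, then invoke finiteness of $\pi_0(\mathcal{H}^{\mathrm{sm}}(2,2,r)(A))$ (the paper's Theorem~\ref{thm:H2rd}), which in turn rests on arithmetic hyperbolicity of $\mathcal{C}_{(2r;1)}$ and the $\mu_2$-gerbe structure. The paper is terser in the reduction step---it simply says ``project to the first three coordinates and use Lemma~\ref{lem:doublecovers}''---whereas you explicitly establish $a\in A^\times$ and complete the square. This extra explicitness is useful, since the projection is only defined on $X$ once one knows $X$ avoids $[0\!:\!0\!:\!0\!:\!1]$.

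One technical remark on your singularity argument: showing that the affine cone in $\AA^4$ is singular at $(0,0,0,1)$ does not \emph{directly} imply $X_{\kappa(\mathfrak p)}$ is singular at $[0\!:\!0\!:\!0\!:\!1]$, because the quotient map $\AA^4\setminus\{0\}\to\PP(1,1,1,r)$ is neither smooth nor flat at points with nontrivial $\GG_m$-stabiliser. The cleanest fix is to bypass the gradient computation entirely: since $r\geq 2$, the point $[0\!:\!0\!:\!0\!:\!1]$ is singular in $\PP(1,1,1,r)$, and $X$ is a Cartier divisor there (as $r\mid 2r$); but a Cartier divisor passing through a singular point of the ambient space is always itself singular at that point (embedding dimension drops by at most one when quotienting by a single equation). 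This gives the contradiction immediately once you observe $a\equiv 0\pmod{\mathfrak p}$ forces $[0\!:\!0\!:\!0\!:\!1]\in X_{\kappa(\mathfrak p)}$.
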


We prove this by using  moduli stacks of cyclic covers and performing a descent to appeal to Faltings' result for curves. For $r=2$, Theorem \ref{thm:weighted} recovers the Shafarevich conjecture for  del Pezzo surfaces of degree two due to Scholl \cite{Scholl}, and for $r=3$, the Shafarevich conjecture for polarized K3 surfaces of degree 2 due to Andr\'e \cite{Andre}. For any $r\geq 4$, Theorem \ref{thm:weighted} gives new cases of the Shafarevich conjecture for simply connected surfaces of general type which a priori have no relation to abelian varieties.
	
\subsection{Double covers of abelian varieties}
Our final application concerns double covers of abelian varieties. In its most precise form the result concerns the arithmetic hyperbolicity of a certain moduli stack parametrising such varieties, which in particular also gives finiteness results for double covers of torsors under abelian varieties. For the introduction, we content ourselves with the following slightly less precise statement. 

\begin{theorem}[Informal version of Theorem \ref{thm:Ggp}]  \label{mainthm:gentype}  
	Let $K$ be a number field and $S$ a finite set of finite places of $K$.
	Let $p,g \in \N$ with $g=2$ or $g \geq 4$.
	Then the set of isomorphism classes of smooth general type varieties $X$ over $K$ of dimension $g$, geometric genus $p$,  whose Albanese map over $\bar{K}$ is a double cover, and with good reduction outside of $S$, is finite.
\end{theorem}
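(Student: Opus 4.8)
The plan is to attach to such an $X$ a double cover of an abelian variety, recognise it as an integral point of a moduli stack of cyclic covers, and then reduce --- via the proper--\'etaleness of the branch morphism (Theorem~\ref{thm:properetale}) and the stacky Chevalley--Weil theorem of \cite{JLalg} --- to finiteness theorems that are already available. Concretely, let $X$ be as in the statement; after enlarging $S$ we may assume $2\in\OO_{K,S}^\times$. Over $\overline K$ the Albanese realises $X$ as a double cover $\pi\colon X\to A$ of $A=\Alb(X)$ (of dimension $g$), branched over a smooth divisor $D$ and attached to a line bundle $\mathcal L$ with $\mathcal L^{\otimes 2}\cong\OO_A(D)$. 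Since $\omega_X\cong\pi^*\mathcal L$ and $\pi$ is finite, $\kappa(X)=\kappa(A,\mathcal L)$, so $X$ is of general type iff $\mathcal L$ is big; on an abelian variety the pseudoeffective and nef cones coincide, so the big cone is the ample cone and $\mathcal L$ is in fact \emph{ample}. Thus $D$ is a smooth ample divisor (of class $2c_1(\mathcal L)$), Serre duality on $A$ gives $p=1+h^0(A,\mathcal L)$, so $h^0(A,\mathcal L)=p-1$ and the polarisation type of $\mathcal L$ ranges over a finite set depending only on $g$ and $p$. Over $K$ the abelian variety must be replaced by the Albanese torsor, so the datum attached to $X$ is a double cover of a torsor under an abelian $g$-fold branched over a smooth divisor of the fixed numerical type --- an $\OO_{K,S}$-point of a finite-type moduli stack $\mathcal C$ of such covers, with good reduction of $X$ spreading out to the Albanese, the cover, and the branch divisor (the last step using $2\in\OO_{K,S}^\times$). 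The sharp form of the theorem is the arithmetic hyperbolicity of $\mathcal C$.

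Let $\mathcal D$ be the moduli stack of pairs $(A,D)$ (a torsor under an abelian $g$-fold with a smooth divisor of the fixed type) and $b\colon\mathcal C\to\mathcal D$ the branch morphism, which is proper and \'etale by Theorem~\ref{thm:properetale}. By the stacky Chevalley--Weil theorem \cite{JLalg} it suffices to show $\mathcal D$ is arithmetically hyperbolic: integral points of $\mathcal C$ lie over the finitely many integral points of $\mathcal D$, and each fibre of $b$ is finite \'etale. Forgetting $D$ gives a morphism $\mathcal D\to\mathcal A$ to the moduli stack of (torsors under) polarised abelian $g$-folds of the relevant type, whose integral points over $\Spec\OO_{K,S}$ form a finite set by Faltings' theorem \cite{Faltings2} (Zarhin's trick reduces the polarisation type, and a torsor that carries an ample divisor of bounded type has class annihilated by the polarisation isogeny, hence lies in a finite set). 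So everything reduces to arithmetic hyperbolicity of the fibres of $\mathcal D\to\mathcal A$. A fibre, over $(A,\lambda)$, is an open substack of a quotient of $|\mathcal L^{\otimes 2}|\cong\PP^{2^{g}(p-1)-1}$ by $\mathcal G(\mathcal L^{\otimes 2})\rtimes\Aut(A,\lambda)$, and its arithmetic hyperbolicity is precisely the finiteness --- up to translation and polarised automorphism --- of smooth divisors $D\in|\mathcal L^{\otimes 2}|$ with good reduction; this is the heart of the matter.

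When $g=2$ this reduces to Faltings once more. By adjunction $D$ is then a smooth curve of genus $4p-3\ge 2$, hence (finiteness of curves of bounded genus with good reduction \cite{Faltings2}) ranges over finitely many abstract curves; and for each $A$ and each such curve, a closed immersion $D\hookrightarrow A$ onto an ample divisor factors, up to translation, through the Albanese $D\to\Jac(D)$ by a surjection $\Jac(D)\twoheadrightarrow A$ whose kernel is an abelian subvariety of $\Jac(D)$ carrying an induced polarisation of bounded type. Since there are only finitely many abelian subvarieties of bounded exponent, the case $g=2$ follows.

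When $g\ge 4$ the divisor $D$ is a higher-dimensional variety of general type and this route is closed; instead one needs the finiteness of smooth ample divisors in a fixed abelian variety with good reduction. I would obtain this by a Lawrence--Venkatesh-style study of the family $\{\pi_D\colon X_D\to A\}$ over the smooth locus of $|\mathcal L^{\otimes 2}|$: sufficiently large monodromy of the primitive weight-$(g-1)$ local system, together with near-finiteness of the attached $p$-adic period map, confines the integral points to a proper closed substack, and Noetherian induction then finishes. Establishing the big-monodromy input is the main obstacle, and it is presumably also the reason $g=3$ is excluded: that is the borderline weight-two case --- $D$ is then a surface of general type with irregularity $3$, not of K3 type --- where this Hodge-theoretic input is unavailable, while $g=2$ is handled directly by Faltings.
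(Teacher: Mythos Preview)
Your strategy is exactly the one the paper follows: pass from $X$ to its Albanese double cover, view this as a point of the cyclic-cover stack $\mathcal{X}_2$ over the stack $\mathcal{AH}^{\mathrm{sm}}_{g,2^g(p-1)}$ of smooth abelian hypersurfaces, use Theorem~\ref{thm:properetale} plus stacky Chevalley--Weil to descend, and then reduce arithmetic hyperbolicity of $\mathcal{AH}^{\mathrm{sm}}_{g,d}$ to (i) Faltings for abelian varieties, (ii) finiteness of polarisations and of torsors with bounded polarisation class, and (iii) finiteness of smooth divisors in a fixed polarised abelian scheme with good reduction. Your computations ($h^0(\mathcal L)=p-1$, branch degree $2^g(p-1)$, and genus $4p-3$ when $g=2$) all match the paper's.

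The only substantive gap is in your treatment of step (iii) for $g\ge 4$. You present the big-monodromy input for a Lawrence--Venkatesh argument as ``the main obstacle'' still to be established. In fact this is precisely the main theorem of Lawrence--Sawin \cite{lawrence2020shafarevich}, which the paper imports as a black box (see Theorem~\ref{thm:sawinhyp}); nothing further is proved here. So your proof becomes complete once you replace the sketch in your final paragraph by a citation of that result. Your explanation of why $g=3$ is excluded is consistent with the restriction in \cite{lawrence2020shafarevich}.

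One minor remark: for $g=2$ the paper simply writes ``Faltings'', whereas you spell out the passage through $\Jac(D)$ and the finiteness of abelian subvarieties of bounded exponent to control the embeddings $D\hookrightarrow A$. Your elaboration is correct and arguably clearer than the paper's one-word citation.
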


We achieve this using recent work of Lawrence and Sawin on a version of the Shafarevich conjecture for hypersurfaces in abelian varieties \cite{lawrence2020shafarevich}, proved using the new  method of Lawrence and Venkatesh \cite{VL18}.

\subsection{Cyclic covering stacks}
Our main theorem on stacks of cyclic covers is Theorem \ref{thm:properetale}, which is the crucial new geometric input in the paper. We state here an informal version of this result and refer the reader to \S \ref{sec:stacks} for appropriate background and a complete statement.
We expect this result to be of independent interest and useful in other contexts where cyclic covers arise.

\begin{theorem}[Informal version of Theorem \ref{thm:properetale}] 
	Let $r \in \N$.
	Let $\mathcal{X}$ be the moduli stack over $\Z[1/r]$ whose objects are pairs $(X,D)$,
	where $X$ is a smooth proper variety with an ample divisor $D$. 
	Let $\mathcal{X}_r$ be the moduli stack 
whose objects are pairs $(X,D)\in \mathcal{X}$ together with a uniform cyclic covering of $X$ of degree $r$ ramified exactly along $D$. Then the natural morphism
	$$\mathcal{X}_r \to \mathcal{X},$$
	which associates to such a  cover its base variety and branch divisor,
	is proper \'etale (i.e. $\mathcal{X}_r$ is an \'etale gerbe over a finite \'etale cover of $\mathcal{X}$).
\end{theorem}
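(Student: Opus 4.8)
The plan is to show that $\mathcal{X}_r \to \mathcal{X}$ is proper, separated, and formally étale, and then to assemble these into "proper étale." The key structural input is that a uniform cyclic cover of degree $r$ of $X$ ramified exactly along $D$ is equivalent data to a line bundle $L$ on $X$ together with an isomorphism $L^{\otimes r} \cong \mathcal{O}_X(D)$ — this is the formulation going back to \cite{ArsieVistoli} (one forms $\operatorname{Spec}$ of the sheaf of algebras $\bigoplus_{i=0}^{r-1} L^{-i}$, with algebra structure twisted by the section cutting out $D$). So a point of $\mathcal{X}_r$ lying over $(X,D) \in \mathcal{X}$ is precisely an $r$-th root of the line bundle $\mathcal{O}_X(D)$ in the Picard groupoid of $X$. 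Thus, fppf-locally on the base, the fiber of $\mathcal{X}_r \to \mathcal{X}$ over a fixed $(X,D)$ is a torsor under $\mathbf{Pic}[r]$, the $r$-torsion of the relative Picard stack; and as a stack it is a $\mu_r$-gerbe over this torsor, the $\mu_r$ coming from automorphisms of $L$ that do not move the covering (cf.\ Example \ref{ex:cyclicprojectivespace}).

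First I would establish that the morphism is \emph{formally étale}, which is the geometric heart. Given a square-zero extension $T_0 \hookrightarrow T$ and a point of $\mathcal{X}$ over $T$ together with a lift to $\mathcal{X}_r$ over $T_0$, we must show the lift extends uniquely. Since $X \to T$ is smooth and proper (and $D$ a divisor flat over $T$), deformation theory of line bundles together with a section trivializing $L^{\otimes r}$ reduces this to the statement that the $r$-torsion subscheme of $\mathbf{Pic}_{X/T}$ is étale over $T$ — equivalently, that multiplication by $r$ on the Picard scheme is étale — which holds precisely because we inverted $r$ (so $[r]$ is étale on the abelian-variety part, and it is the identity, hence étale, on the Néron–Severi part, which we may assume is unramified after suitable localization). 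The $\mu_r$-gerbe structure is unobstructed and rigid since $r$ is invertible, so it contributes no obstruction and no ambiguity. Concretely, one shows the (co)tangent complex of $\mathcal{X}_r \to \mathcal{X}$ vanishes: infinitesimal automorphisms and deformations of an $r$-th root over a fixed $(X,D)$ are governed by $\mathrm{H}^i(X, \mathcal{O}_X)[r]$-type groups killed by the invertibility of $r$.

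Next I would prove \emph{properness} via the valuative criterion. Let $R$ be a DVR with fraction field $F$, and suppose we have $(X,D)$ over $R$ together with an $r$-th root $L_F$ of $\mathcal{O}_{X_F}(D_F)$ over the generic fiber; we must extend $L_F$ to an $r$-th root over $R$, uniquely up to unique isomorphism. Since $X/R$ is smooth proper, $L_F$ extends to \emph{some} line bundle $\mathcal{L}$ on $X$ (using that $\mathbf{Pic}_{X/R}$ is separated and that line bundles extend over codimension $\geq 2$, or by taking the closure of the graph); the two line bundles $\mathcal{L}^{\otimes r}$ and $\mathcal{O}_X(D)$ agree on the generic fiber, hence differ by a line bundle pulled back from $\operatorname{Spec} R$ — trivial since $R$ is local — after possibly twisting $\mathcal{L}$ by such. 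This gives an $r$-th root over $R$; for existence in the stacky sense one also lifts the gerbe class, which is automatic as $\mathrm{H}^2_{\mathrm{fppf}}(X, \mu_r)$ behaves well over a strictly henselian base and $r$ is invertible. Separatedness — uniqueness of the extension — again follows from separatedness of $\mathbf{Pic}_{X/R}$ together with the fact that $\mathbf{Pic}[r]$ is unramified over $R$, so two $r$-th roots agreeing generically agree integrally. Since $\mathcal{X}_r \to \mathcal{X}$ is then representable by algebraic spaces (the $\mu_r$-gerbe being neutralized fppf-locally), quasi-compact, separated, universally closed, and formally étale, it is proper étale.

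The main obstacle I anticipate is not any single hard estimate but rather the bookkeeping needed to handle the \emph{stacky} fibers correctly: one must track not only line bundles and their $r$-th roots but the full $\mu_r$-gerbe structure of $\mathcal{X}_r$ over $\mathcal{X}$ through every deformation- and valuative-criterion argument, and verify that all relevant cohomology of $\mu_r$ is étale-locally trivial precisely because $r$ is inverted. In particular, making the equivalence "uniform cyclic cover $\leftrightarrow$ $r$-th root of $\mathcal{O}(D)$, up to the gerbe" functorial and fppf-local in a way compatible with the stack structure on $\mathcal{X}$ — and checking that $D$ being \emph{ample} (needed for $\mathcal{X}$ to be an algebraic stack with reasonable properties) plays no role in the étaleness — is where the care must go. The geometric content, that $[r]: \mathbf{Pic} \to \mathbf{Pic}$ is étale once $r$ is invertible, is standard; packaging it as a clean statement about the morphism of moduli stacks is the real work.
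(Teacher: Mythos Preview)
Your strategy is essentially the same as the paper's: both recognize that a uniform cyclic cover is an $r$-th root of $\mathcal{O}_X(D)$, so the morphism $\mathcal{X}_r \to \mathcal{X}$ is governed by the $r$-th power map on the Picard stack, which is proper quasi-finite and, once $r$ is inverted, \'etale. The paper packages this more cleanly than you do: it realizes $\mathcal{X}_r \to \mathcal{X}$ \emph{literally} as a base change of $(\cdot)^r: \mathbf{Pic}_{\mathcal{U}/\mathcal{Y}} \to \mathbf{Pic}_{\mathcal{U}/\mathcal{Y}}$ via the diagram in Definition~\ref{def:universalbranch}, so all properties follow at once from this single morphism. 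For properness and quasi-finiteness, the paper splits $\mathbf{Pic}_{X/S} \simeq \Pic_{X/S} \times B\mathbb{G}_m$ (after acquiring a section) and treats the factors separately; for \'etaleness it observes that $(\cdot)^r$ on $B\mathbb{G}_{m,X}$ is a $\mu_r$-gerbe, hence formally \'etale when $r$ is invertible, and then invokes the fact that Weil restriction along $X \to S$ preserves formally \'etale morphisms (Lemma~\ref{lem:weilrestriction}). This avoids your direct deformation-theory computation and the separate valuative-criterion argument, though both are valid routes to the same facts.

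One genuine error: you assert at the end that $\mathcal{X}_r \to \mathcal{X}$ is ``representable by algebraic spaces.'' It is not. You yourself note earlier that the fiber carries a $\mu_r$-gerbe structure coming from automorphisms of $L$ that fix $L^{\otimes r}$; these are nontrivial relative automorphisms, so the morphism has nontrivial relative inertia and cannot be representable. The paper is explicit about this (see the Conventions: proper \'etale morphisms are finite \'etale iff representable, and $G$-gerbes for finite \'etale $G$ are the basic non-representable examples; and Corollary~\ref{cor:gerbenotorsion}). This does not damage the conclusion---proper \'etale makes sense for non-representable morphisms of stacks---but you should drop the representability claim and instead check proper, quasi-finite, locally of finite presentation, and formally \'etale directly at the level of stacks.
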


 \subsection*{Outline of the paper}
 In \S\ref{section:arithmetic_hyperbolicity} we recall some basic properties of arithmetic hyperbolicity from \cite{JLalg}. In \S \ref{sec:stacks} we then introduce the stacks we shall need and study their geometry in detail. In the remaining sections we use various special cases of these constructions to derive the applications stated in the introduction.

	\begin{ack}   We thank David Rydh for many useful comments and suggestions. We are grateful to Jack Hall and  Angelo Vistoli for helpful discussions, and  Brian Lawrence and Will Sawin for help with the proof of Theorem \ref{thm:sawinhyp}.   We thank the referee for helpful comments. The first named author gratefully acknowledges  support of   the IHES where part of this work was completed, as well as the University of Paris-Saclay for its hospitality.  The second-named author is supported by EPSRC grant EP/R021422/2. The third named author conducted this research in the framework of the research training group GRK 2240: Algebro-geometric Methods
in Algebra, Arithmetic and Topology, which is funded by the Deutsche Forschungsgemeinschaft. 
	\end{ack}

\begin{con}  We follow the conventions of \cite{JLalg} concerning varieties, and rings of $S$-integers in number fields, etc.

  Concerning gerbes, we follow the (standard) conventions of the stacks project \cite[Tag~06QB]{stacks-project}. For the definition of a $G$-gerbe, for a sheaf of groups $G$, see	 \cite[3.2]{EHKV}. For a gerbe \emph{banded} by an abelian group scheme $G$ see \cite[12.2.2]{OlssonBook}.

  If $\mathcal{F}$ is a locally free sheaf (of finite rank) on an algebraic stack $\mathcal{X}$ over a scheme $S$ and $r$ is an integer, we let $\mathcal{F}^r$ denote the $r$-th tensor power of $\mathcal{F}$. Moreover, we let $\mathbb{V}(\mathcal{F})$ denote the associated  vector bundle:
  \[v: \text{Spec}_{\mathcal{X}}(\text{Sym}^*\mathcal{F}^{\vee}) \to \mathcal{X}.\]  
 In particular, objects of $\mathbb{V}(\mathcal{F})(T)$ are pairs $x \in \mathcal{X}(T)$ and $s:\mathcal{O}_{\mathcal{X}_{T}} \to \mathcal{F}|_{\mathcal{X}_{T}}$. Finally, let $\mathbb{V}^{\circ}(\mathcal{F})$ denote the complement in $\mathbb{V}(\mathcal{F})$ of the zero section. Note that some authors will refer to the above vector bundle as $\mathbb{V}(\mathcal{F}^\vee)$ (see, for example, \cite[Tag~01M2]{stacks-project}).

Let $f: \mathcal{Z} \to \mathcal{S}$ and $g: \mathcal{X} \to \mathcal{Z}$ be morphisms of algebraic stacks. We denote by $f_*\mathcal{X}$ the Weil restriction of $\mathcal{X}$, viewed as a stack over $\mathcal{S}$. For any $\mathcal{S}$-scheme $T$ we have
\[f_*\mathcal{X}(T)=\text{Hom}_{\mathcal{Z}}(\mathcal{Z} \times_{\mathcal{S}} T, \mathcal{X}).\]
The stack $f_*\mathcal{X}$ is  algebraic if $f$ is proper and flat of finite presentation and $f \circ g$ is locally of finite presentation, quasi-separated, and has affine stabilizers \cite[Thm.~1.3]{hall2019coherent}. 
 
For an algebraic stack $\mathcal{X}$ we denote its Picard group by $\Pic \mathcal{X}$. For a representable morphism of algebraic stacks $f: \mathcal{X} \to \mathcal{Y}$, we denote by $\Pic_{\mathcal{X}/\mathcal{Y}}$ the sheafification of the Picard functor with respect to the fppf topology, and by $\PicS_{\mathcal{X}/\mathcal{Y}}=f_*(B\mathbb{G}_{m, \mathcal{X}})$, the Picard stack.

The Picard stack is algebraic when $f$ is a flat proper morphism of finite presentation. If, in addition, $f$ is cohomologically flat in dimension zero, the Picard functor is representable by an algebraic space (see \cite[Tags 0D2C and 0D04]{stacks-project}).  

For a stack $\mathcal{X}$, the universal object is the object of $\mathcal{X}$ corresponding to the identity morphism $\mathcal{X} \to \mathcal{X}$.
\end{con}

	\section{Arithmetic hyperbolicity and integral points on stacks}\label{section:arithmetic_hyperbolicity}
 
	\emph{Throughout this section $k$ is an algebraically closed field of characteristic zero}. We start with introducing the notion of arithmetic hyperbolicity for algebraic stacks following \cite[\S4]{JLalg}.

\begin{definition} \label{defn:arithmetic_hyperbolicity}
		A finitely presented algebraic stack  $X$ over  $k$ is \emph{arithmetically hyperbolic over $k$} if there exists a $\ZZ$-finitely generated subring    $A\subset k$ and a model $\mathcal X$ of $X$ over $A$  such that, for every $\ZZ$-finitely generated subring  $A\subset A'\subset k$, the set 
		$$\Im[ \pi_0(\mathcal X(A'))~\to~\pi_0(\mathcal X(k))] \quad \text{is finite}.  $$
\end{definition}
	
Note that $\mathcal{X}(A)$ is a groupoid, so we consider finiteness properties related to $\pi_0(\mathcal{X}(A))$, which denotes the set of isomorphism classes of objects in $\mathcal{X}(A)$.
We emphasise that arithmetic hyperbolicity is a statement about the image of the integral
points inside the points over the algebraic closure. The advantage of this definition
is that one may pass to a finite field extension which simplifies many
arguments by trivialising any Galois-theoretic data. To get a result about finiteness
of integral points over the ground field, one uses the twisting lemma below. It
requires one to establish separation properties of the stack, but crucially these can be achieved using geometric arguments about the objects they parametrize
which, a priori, have nothing to do with arithmetic. The ``finite diagonal'' hypothesis in the statement holds for separated Deligne--Mumford stacks, for example.

\begin{theorem}[Twisting lemma, {\cite[Thm.~4.23]{JLalg}}]\label{thm:twists}
	Let $X$ be a finite type algebraic stack   over $k$ with finite diagonal. The following statements are equivalent.
	\begin{enumerate}
		\item The stack $X$ is arithmetically hyperbolic over $k$. 
		\item For all $\ZZ$-finitely generated integrally closed subrings $A\subset k$  and all     models $\mathcal X\to \Spec A$ for $X$ over $A$ with finite diagonal, the  set  $ \pi_0(\mathcal X(A)) $ of isomorphism classes of $A$-integral points on $\mathcal{X}$ is finite.
	\end{enumerate}
\end{theorem}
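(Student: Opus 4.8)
The plan is to prove the two implications separately, treating $(2)\Rightarrow(1)$ as the soft direction and concentrating on $(1)\Rightarrow(2)$, where the genuine content—a twisting/Chevalley--Weil argument—lies.

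\emph{The implication $(2)\Rightarrow(1)$.} Since $X$ has finite diagonal and finite diagonal spreads out, there is a $\ZZ$-finitely generated integrally closed subring $A_0\subset k$ and a model $\mathcal X_0$ of $X$ over $A_0$ with finite diagonal; I would verify Definition~\ref{defn:arithmetic_hyperbolicity} using this model. Given a $\ZZ$-finitely generated subring $A_0\subset A'\subset k$, let $B$ be the integral closure of $A'$ in $\kappa(A')$. By finiteness of normalization for finitely generated $\ZZ$-algebras, $B$ is again $\ZZ$-finitely generated, and it is integrally closed with $A_0\subset A'\subset B\subset k$. Base change along $A'\to B$ factors the map $\pi_0(\mathcal X_0(A'))\to \pi_0(\mathcal X_0(k))=\pi_0(X(k))$ through $\pi_0((\mathcal X_0)_B(B))$. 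As $(\mathcal X_0)_B$ has finite diagonal, $\pi_0((\mathcal X_0)_B(B))$ is finite by $(2)$, so the image is finite, which is exactly arithmetic hyperbolicity.

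\emph{The implication $(1)\Rightarrow(2)$: reduction to fibers and finiteness of the image.} Fix an integrally closed $\ZZ$-finitely generated $A\subset k$ and a model $\mathcal Y$ of $X$ over $A$ with finite diagonal, and consider $\Phi\colon \pi_0(\mathcal Y(A))\to \pi_0(\mathcal Y(k))=\pi_0(X(k))$ sending $\xi$ to its geometric fiber $\xi_k$. It suffices to show that $\Phi$ has finite image and finite fibers. For the image, both $\mathcal Y$ and the good model $\mathcal X_0$ from $(1)$ have generic fiber $X$, so the resulting $k$-isomorphism $\mathcal Y_k\cong (\mathcal X_0)_k$ spreads out to an isomorphism $\mathcal Y_{A'}\cong (\mathcal X_0)_{A'}$ over some $\ZZ$-finitely generated $A'\subset k$ containing both $A$ and $A_0$ (after inverting finitely many elements, absorbed into $A'$). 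Base change along $A\to A'$ then factors $\Phi$ through $\Im[\pi_0(\mathcal X_0(A'))\to\pi_0(X(k))]$, which is finite by $(1)$; hence $\Im\Phi$ is finite.

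\emph{Finiteness of the fibers and the main obstacle.} Fix $x\in\Im\Phi$ with representative $\xi_0\in\mathcal Y(A)$. If $\xi$ lies in the fiber then $\xi_k\cong\xi_{0,k}$; since $\overline{\kappa(A)}\subset k$ and $\underline{\Isom}_A(\xi_0,\xi)$ is of finite type, this isomorphism already exists over $\overline{\kappa(A)}$, so $\xi$ is a geometric twist of $\xi_0$. Finite diagonal makes $G:=\underline{\Aut}_A(\xi_0)$ finite over $A$ and $I_\xi:=\underline{\Isom}_A(\xi_0,\xi)$ a finite $A$-scheme which is a $G$-pseudo-torsor, a genuine torsor at the generic point. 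The class of $\xi$ is thus encoded by the finite cover of $\Spec A$ trivialising $I_\xi$, of degree at most $|G|$ and finite \'etale over a dense open of $\Spec A$ containing the generic point. One then argues that, because $\xi$ and $\xi_0$ are $A$-integral and $\mathcal Y$ is separated, the valuative criterion together with normality of $A$ forces these covers to extend to honest fppf $G$-torsors over all of $\Spec A$, so that $\Phi^{-1}(x)$ injects into the pointed set $H^1_{\fppf}(\Spec A, G)$; finiteness of the fiber then follows from finiteness of $H^1_{\fppf}(\Spec A, G)$ for finite $G$ over the normal, finite-type $\ZZ$-scheme $\Spec A$, a Hermite--Minkowski--type statement. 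The hard part is precisely this last extension: upgrading ``geometrically isomorphic and $A$-integral'' to ``fppf-locally isomorphic over all of $\Spec A$''. The obstruction is ramification of the $\Isom$-covers at the finitely many positive-characteristic closed points where $G$ may fail to be \'etale or even flat (as with $\mu_r$ at primes dividing $r$, which is exactly why the geometric constructions elsewhere in the paper are carried out over $\ZZ[1/r]$), and it is this finiteness of torsors under finite group schemes over an arithmetic base, rather than any of the soft descent bookkeeping, that carries the weight of the theorem.
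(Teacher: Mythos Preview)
The paper does not prove this statement: it is quoted verbatim from \cite[Thm.~4.23]{JLalg}, so there is no argument in the present text to compare your proposal against.

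That said, your outline follows the expected shape of the argument in \cite{JLalg}. The direction $(2)\Rightarrow(1)$ and the ``finite image'' part of $(1)\Rightarrow(2)$ are routine spreading-out, and you handle them correctly. For the fibers you rightly reduce to a twisting problem and identify the two genuine inputs: that $\underline{\Isom}_A(\xi_0,\xi)$ is an honest fppf $G$-torsor over all of $\Spec A$, and that $H^1_{\fppf}(\Spec A,G)$ is finite for finite $A$-group schemes $G$ over a normal arithmetic base. One refinement worth noting: the torsor-extension step is somewhat cleaner than your sketch suggests. Since $I_\xi$ is \emph{finite} (hence affine) over $A$, any $L$-point, with $L/\kappa(A)$ finite, automatically factors through the integral closure $B$ of $A$ in $L$; this already gives a $B$-section trivialising the pseudo-torsor after a finite base change, without invoking the valuative criterion directly. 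The genuinely delicate issues are flatness of $G$ over the mixed-characteristic base (you gesture at this with the $\mu_r$ example) and the finiteness of $H^1$ in higher dimension, both of which are dealt with in the cited reference rather than here.
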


The classical Chevalley--Weil theorem says that arithmetic hyperbolicity descends along finite \'etale morphisms of integral varieties over $k$. The following result provides a stacky extension of this result.

	\begin{theorem}[Stacky Chevalley--Weil, {\cite[Thm~5.1]{JLalg}}]\label{thm:chev_weil}
		Let $f:X\to Y$ be a surjective proper \'etale morphism of finitely presented algebraic stacks over $k$. Then $X$ is arithmetically hyperbolic if and only if $Y$ is arithmetically hyperbolic.
	\end{theorem}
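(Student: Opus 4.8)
The plan is to prove the two implications separately, after a preliminary spreading-out step. Using the limit formalism I would first choose a common $\mathbb{Z}$-finitely generated subring $A\subseteq k$ and a model $f\colon \mathcal{X}\to\mathcal{Y}$ over $A$ which is itself proper \'etale and surjective of finite presentation, invoking that arithmetic hyperbolicity is insensitive to the choice of model (one may always enlarge $A$). The two directions are then very different in character: one is a formal fibre-counting argument, while the other carries the genuine Chevalley--Weil content.

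For the implication ``$Y$ arithmetically hyperbolic $\Rightarrow X$ arithmetically hyperbolic'', I would fix a $\mathbb{Z}$-finitely generated $A\subseteq A'\subseteq k$ and use that composition with $f$ relates $\pi_0(\mathcal{X}(A'))\to\pi_0(X(k))$ to $\pi_0(\mathcal{Y}(A'))\to\pi_0(Y(k))$. By hypothesis the image $T$ of the latter is finite, so the image of the former lands in the preimage $\pi_0(f)^{-1}(T)\subseteq\pi_0(X(k))$. The crucial point is that $\pi_0(f)$ has finite fibres: over the algebraically closed field $k$, the fibre of a proper \'etale morphism over a point is a proper \'etale stack, hence a finite disjoint union of classifying stacks $BG_i$ of finite groups, each contributing a single isomorphism class of $k$-points (as $\H^1(k,G_i)$ is trivial). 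Thus $\pi_0(f)^{-1}(T)$ is finite and $X$ is arithmetically hyperbolic.

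The reverse implication is the substantial one. Fix a $\mathbb{Z}$-finitely generated $A\subseteq A'\subseteq k$, which I may assume integrally closed by passing to its normalisation, and let $y\in\mathcal{Y}(A')$; I want to show that $[y]\in\pi_0(Y(k))$ ranges over a finite set as $y$ varies. Pulling back $f$ along $y$ yields a proper \'etale surjection $\mathcal{X}_y\to\Spec A'$ of bounded degree (the degree of $f$ being bounded over the Noetherian base). Being \'etale over a scheme, $\mathcal{X}_y$ is Deligne--Mumford with finite inertia, so its coarse space $C_y\to\Spec A'$ exists and is finite \'etale of bounded degree, and the residual gerbe $\mathcal{X}_y\to C_y$ is banded by a finite group. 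The heart of the matter is to lift $y$ to an \emph{integral} point of $\mathcal{X}$: I would produce a single finite extension $A''$ with $A'\subseteq A''\subseteq k$ and $\kappa(A'')/\kappa(A')$ finite over which every such $C_y$ splits (a generic section of a finite \'etale cover over a normal base spreads to a global section) and every such gerbe becomes neutral, so that $\mathcal{X}_y(A'')\neq\emptyset$ for all $y$ simultaneously. Each $y$ then lifts to some $x\in\mathcal{X}(A'')$ with $\pi_0(f)([x])=[y]$, whence the set of $[y]$ is contained in $\pi_0(f)\big(\Im[\pi_0(\mathcal{X}(A''))\to\pi_0(X(k))]\big)$, which is finite because $X$ is arithmetically hyperbolic. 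As $A'$ was arbitrary, $Y$ is arithmetically hyperbolic.

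The main obstacle is exactly the uniformity in the last step: producing a \emph{single} finite extension $A''$ that works for all integral points $y$ at once. This rests on a Hermite--Minkowski--type finiteness, namely that a normal finitely generated $\mathbb{Z}$-algebra admits only finitely many connected finite \'etale covers of bounded degree, equivalently that its \'etale fundamental group has finitely many open subgroups of each bounded index. Granting this, one takes $A''$ to trivialise the finitely many covers $C_y$, and I expect the remaining technical care to lie in the gerbe part: ensuring that a section of the coarse space lifts to a section of the stack after a bounded further cover, which amounts to splitting the corresponding class in $\H^2_{\et}(\Spec A'',G)$ by a finite \'etale cover of bounded degree, so that the enlargement of $A''$ remains uniform over all $y$.
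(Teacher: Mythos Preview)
The paper does not prove this theorem; it is simply quoted from \cite[Thm~5.1]{JLalg} with no argument given, so there is no in-paper proof to compare your proposal against.

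For what it is worth, your outline has the expected shape of a stacky Chevalley--Weil argument. The easy direction is precisely Lemma~\ref{lem:qf} (quasi-finite morphisms reflect arithmetic hyperbolicity from target to source), and your fibre description over the algebraically closed $k$ is correct. For the hard direction you have correctly isolated both the strategy---lift every integral point $y\in\mathcal{Y}(A')$ along $f$ after a controlled finite extension of $A'$---and the essential obstacle, namely producing a \emph{single} extension $A''$ that works uniformly for all $y$. The Hermite--Minkowski-type input you name (only finitely many connected finite \'etale covers of $\Spec A'$ of bounded degree, for $A'$ normal and $\mathbb{Z}$-finitely generated) is indeed the arithmetic heart of the matter; once a uniform degree bound is extracted from finite presentation of $f$ over the Noetherian model, the rest is the bookkeeping with coarse spaces and residual gerbes that you describe. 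One point worth making explicit in a full write-up is why the coarse space $C_y\to\Spec A'$ is \'etale and not merely finite: this uses that in characteristic zero the coarse quotient of a finite \'etale scheme by a finite group action remains finite \'etale.
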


We will make use of the following from \cite[Prop.~4.16]{JLalg}.

\begin{lemma}\label{lem:qf}
		Let $  Y\to Z$ be a quasi-finite morphism of finitely presented algebraic stacks over $k$. If $Z$ is arithmetically hyperbolic, then 
		$Y$ is arithmetically hyperbolic. 
\end{lemma}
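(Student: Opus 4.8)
The plan is to verify Definition \ref{defn:arithmetic_hyperbolicity} for $Y$ directly, producing a suitable model of $Y$ and controlling the image of its integral points by transporting the analogous control already available for $Z$. First I would spread everything out. Since $Z$ is arithmetically hyperbolic, I fix a $\ZZ$-finitely generated subring $A_0\subseteq k$ and a model $\mathcal{Z}_0$ of $Z$ over $A_0$ as in Definition \ref{defn:arithmetic_hyperbolicity}. Because $Y$, $Z$ and $f\colon Y\to Z$ are all finitely presented over $k=\varinjlim A'$, a standard limit (spreading-out) argument descends $Y$ and $f$ to a finitely presented model $\mathcal{Y}$ over some $\ZZ$-finitely generated $A\supseteq A_0$ together with a morphism $F\colon \mathcal{Y}\to \mathcal{Z}:=\mathcal{Z}_0\times_{A_0}A$ whose base change to $k$ recovers $f$. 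For every $A\subseteq A'\subseteq k$ one has $\mathcal{Z}(A')=\mathcal{Z}_0(A')$, so the finiteness property of $\mathcal{Z}_0$ is inherited by $\mathcal{Z}$; importantly, I do \emph{not} need $F$ to be quasi-finite, only that it restricts to $f$ over $k$.

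Now fix a $\ZZ$-finitely generated $A\subseteq A'\subseteq k$ and take any $y\in\mathcal{Y}(A')$, with base change $y_k\in Y(k)$ and image $F(y)\in\mathcal{Z}(A')$. Since $(F(y))_k=f(y_k)$, the class $[f(y_k)]$ lies in $\mathrm{Im}[\pi_0(\mathcal{Z}(A'))\to\pi_0(\mathcal{Z}(k))]$, which is finite by arithmetic hyperbolicity of $Z$; call the resulting classes $z_1,\dots,z_m\in\pi_0(Z(k))$. The second ingredient is geometric: for each $z_i$, choosing a representative $z_i\colon\Spec k\to Z$, the fibre $Y_{z_i}:=Y\times_{Z,z_i}\Spec k$ is quasi-finite over $k$ (base change of $f$ preserves ``of finite type'' and ``locally quasi-finite''), hence is a quasi-compact, locally quasi-finite, finite type algebraic stack over the algebraically closed field $k$. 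Its underlying topological space is therefore a finite discrete set, and since every torsor under a finite type group scheme over an algebraically closed field has a rational point and is thus trivial, each residual gerbe contributes exactly one isomorphism class; consequently $\pi_0(Y_{z_i}(k))$ is finite.

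Finally I would assemble these. Any $y_k$ with $[f(y_k)]=z_i$ becomes, after a choice of isomorphism $f(y_k)\xrightarrow{\sim}z_i$, the image under the projection $Y_{z_i}\to Y$ of a $k$-point of $Y_{z_i}$, so $[y_k]\in\mathrm{Im}[\pi_0(Y_{z_i}(k))\to\pi_0(Y(k))]$. Hence
\[
\mathrm{Im}[\pi_0(\mathcal{Y}(A'))\to\pi_0(\mathcal{Y}(k))]\;\subseteq\;\bigcup_{i=1}^{m}\mathrm{Im}[\pi_0(Y_{z_i}(k))\to\pi_0(Y(k))]
\]
is finite, which is precisely arithmetic hyperbolicity of $Y$ with respect to the model $\mathcal{Y}$ over $A$. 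The step I expect to be the main obstacle is the geometric finiteness of $\pi_0(Y_{z_i}(k))$: one must unwind the stacks-project notion of (locally) quasi-finiteness for non-representable morphisms to guarantee that the fibre genuinely has finitely many points and that the relevant automorphism cohomology over $k$ vanishes. By contrast, the spreading-out step — keeping $\mathcal{Y}$ finitely presented and compatible with the base model $\mathcal{Z}_0$ — is routine, and the arithmetic input is used only as a black box.
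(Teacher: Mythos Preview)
Your argument is correct. The paper itself does not prove this lemma at all: it simply quotes \cite[Prop.~4.16]{JLalg} as a black box. So you have supplied a genuine proof where the paper offers only a citation, and your strategy---spread out to a common model, use arithmetic hyperbolicity of $Z$ to pin the image down to finitely many $z_i\in\pi_0(Z(k))$, then bound each fibre $\pi_0(Y_{z_i}(k))$ using quasi-finiteness over the algebraically closed field---is exactly the natural one, and is essentially what the cited reference does.

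Two small comments on execution. First, your observation that $F:\mathcal{Y}\to\mathcal{Z}$ need not itself be quasi-finite is well taken and worth keeping: you only use quasi-finiteness of $f$ over $k$, which sidesteps any spreading-out of that property. Second, the step you flag as the obstacle is fine but your phrasing via torsors is slightly oblique. The cleanest way to see $\pi_0(Y_{z_i}(k))$ is finite: the topological space $|Y_{z_i}|$ is finite discrete (finite type plus locally quasi-finite over a field), and any two $k$-points landing at the same $p\in|Y_{z_i}|$ factor through the residual gerbe at $p$, which is a gerbe over $\Spec\kappa(p)=\Spec k$; over an algebraically closed field every gerbe is neutral and connected, so all its $k$-objects are isomorphic. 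That replaces the torsor-triviality sentence with a statement directly about gerbes and avoids worrying about whether the automorphism groups are affine.
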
  

We also require the following finiteness statement in cohomology \cite[Prop.~5.1]{GilleMoretBailly}.

\begin{lemma} \label{lem:GMB}
	Let $K$ be a number field, $S$ a finite set of finite places of $K$ and $G$ a finite
	type affine group scheme over $\O_{K,S}$. Then $\H^1(\O_{K,S}, G)$ is finite.
\end{lemma}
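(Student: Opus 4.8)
The plan is to argue by d\'evissage, reducing to a short list of group schemes for which the finiteness is either elementary or classical. Write $R=\O_{K,S}$; throughout we use fppf cohomology, which coincides with \'etale cohomology when the group is smooth. The mechanism is the following: if $1\to A\to G\to Q\to 1$ is an exact sequence of finite type affine $R$-group schemes with $A$ \emph{commutative}, then $Q$ acts on $A$, every class $c\in\H^1(R,Q)$ determines a twisted commutative form ${}_{c}A$ of $A$ (again finite type and affine), and the fibre of $\H^1(R,G)\to\H^1(R,Q)$ over $c$ is a quotient of $\H^1(R,{}_{c}A)$. Consequently, if $\H^1(R,Q)$ is finite and $\H^1(R,{}_{c}A)$ is finite for each of the finitely many $c\in\H^1(R,Q)$, then $\H^1(R,G)$ is finite. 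All the d\'evissage steps below will be by commutative normal subgroups, so it suffices to prove the finiteness for a few building blocks and for their commutative twisted forms.

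The elementary blocks are immediate: for $G=\mathbb{G}_a$ (or any $R$-form of a vector group) one has $\H^1(R,G)=\H^1(\Spec R,\O_R)=0$ since $\Spec R$ is affine; for $G=\mathbb{G}_m$ one has $\H^1(R,\mathbb{G}_m)=\Pic R$, which is finite, being a quotient of the class group of $K$; and for $G$ finite \'etale over $R$, a $G$-torsor is a finite \'etale $R$-scheme of degree at most the order of $G$, so finiteness of $\H^1(R,G)$ is precisely the Hermite--Minkowski finiteness of number fields of bounded degree unramified outside $S$. The one non-elementary input is the finiteness of $\H^1(R,G)$ for $G$ a reductive group scheme over $R$ (in particular for any torus over $R$): this is a classical consequence of the finiteness of class numbers of arithmetic groups (Borel), resting ultimately on reduction theory, the Hasse principle for simply connected groups, and Lang's theorem over finite residue fields.

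It remains to reduce a general finite type affine $R$-group scheme $G$ to these blocks. One may assume $G$ is flat of finite presentation. Since $G_K$ is smooth (characteristic zero), after enlarging $S$ to some finite $S'$ the base change $G_{\O_{K,S'}}$ becomes smooth with connected fibres; the Levi decomposition of $G_K$ then spreads out, over $\O_{K,S'}$, to a filtration of $G_{\O_{K,S'}}$ by commutative normal subgroup schemes whose successive quotients are vector groups, with reductive quotient $G_{\O_{K,S'}}/R_u$. Hence the d\'evissage mechanism together with the two previous paragraphs gives finiteness of $\H^1(\O_{K,S'},G)$. Finally, $\H^1(R,G)$ is recovered from $\H^1(\O_{K,S'},G)$ together with the local cohomology sets $\H^1(\O_v,G)$ for the finitely many $v\in S'\setminus S$ — each of which is finite, this being the local analogue of the above (and absorbing the infinitesimal phenomena at residue characteristics dividing the order of a finite flat part of $G$) — via a patching argument showing that a $G$-torsor over $R$ is pinned down, up to finite ambiguity, by its restrictions to $\O_{K,S'}$ and to these $\O_v$.

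The main obstacle, I expect, is this last patching step: one needs a Mayer--Vietoris/Beauville--Laszlo style analysis glueing torsors over $\Spec\O_{K,S'}$ and the $\Spec\O_v$ ($v\in S'\setminus S$) along the punctured spectra, controlled by the finiteness of fppf cohomology of $G$ over complete discrete valuation rings with finite residue field. The other genuinely non-formal ingredient is the finiteness of $\H^1(R,G)$ in the reductive case, which one should simply cite.
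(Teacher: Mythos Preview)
The paper does not prove this lemma; it simply records the statement and cites \cite[Prop.~5.1]{GilleMoretBailly}. Your proposal is therefore not comparable to a ``paper's proof'' in the usual sense, but it is a reasonable sketch of the strategy one actually finds in that reference (d\'evissage to reductive groups, tori, finite groups, and vector groups, with the reductive case resting on Borel's finiteness of class numbers).

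That said, there are two genuine gaps in your outline. First, the assertion that after enlarging $S$ the group $G_{\O_{K,S'}}$ becomes ``smooth with connected fibres'' is false: enlarging $S$ cannot connect $G_K$ if it is already disconnected. You must first d\'evisse along $1\to G^0\to G\to\pi_0(G)\to 1$, where $\pi_0(G)$ is finite \'etale (one of your building blocks) but the kernel $G^0$ is typically non-commutative, contrary to your stated mechanism that all kernels are commutative. The fix is standard---the twisting description of the fibres of $\H^1(R,G)\to\H^1(R,Q)$ works for arbitrary normal $A$, the fibre over $c$ (when nonempty) being a quotient of $\H^1(R,{}_{c'}A)$ for any lift $c'$---but you should say this, and note that the relevant twists of $G^0$ are again connected linear. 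Second, you correctly flag the patching step from $\O_{K,S'}$ back to $\O_{K,S}$ as the main obstacle, but you do not carry it out; in Gille--Moret-Bailly this is handled with care and is not a formality. A minor further point: ``one may assume $G$ is flat'' also deserves a word of justification, since the lemma as stated does not impose flatness.
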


\section{Moduli stacks of cyclic covers} \label{sec:stacks}

We now study moduli stacks of cyclic covers in a general setting; the special case of cyclic covers of projective space was introduced in \cite{ArsieVistoli}.  Although we use the conventions of $loc.\ cit.$, we develop a theoretical framework in a greater generality (see Definition \ref{def:universalbranch}).

Our main result is that these moduli stacks are proper \'etale over a moduli stack of divisorial pairs (Theorem \ref{thm:properetale}). Since we work in a great generality, we require a lot of set-up and background, which we now introduce. To help the reader, we have various running examples throughout.

\subsection{Polarizing line bundles}

We introduce the mother of all moduli stacks (of smooth varieties) $\mathcal{P}ol^{\mathcal{L}}$ and its variant $\mathcal{P}ol^\Lambda$. Many of the moduli stacks we consider in this paper will be realized as a locally closed locus in $\mathcal{P}ol^\Lambda$ (Examples  \ref{ex:BSpol}, \ref{ex:k3s}). Others can easily be realized as a  vector bundle over $\mathcal{P}ol^{\mathcal{L}}$ (Examples \ref{ex:BS}, \ref{defn:ars}).

Recall that there is an algebraic stack, locally of finite presentation over $\Spec \mathbb{Z}$, parametrizing objects $(f: X \to S, L)$ where $f$ is a proper, flat morphism of finite presentation and $L$ is a relatively ample line bundle on $X/S$ (see \cite[Tags 0D1M, 0D4X, 0DPU]{stacks-project}). The locus of such pairs where $f$ is smooth with geometrically connected fibres is open (see, for example, \cite[Appendix E.1 (12)]{torstenalg}), and we denote the resulting open substack by $\mathcal{P}ol^{\mathcal{L}}$. As explained in \cite[4.2]{abramovich2011stable}, there is an associated algebraic stack parametrizing pairs $(f: X \to S, \lambda)$ where $f$ is as above and $\lambda$ is a relatively ample section of the Picard sheaf $\Pic_{X/S}$; we denote it by $\mathcal{P}ol^\Lambda$. In fact, there is a morphism $\mathcal{P}ol^{\mathcal{L}} \to \mathcal{P}ol^\Lambda$ sending $(f: X \to S, L) \mapsto (f: X \to S, [L])$ where $[L] \in \text{Pic}_{X/S}(S)$ (see Remark \ref{rem:correslb}).

\begin{definition} \label{def:canpol} 
An algebraic stack $\mathcal{X}^{\mathcal{L}}$ is said to be a moduli stack of \emph{varieties with a polarizing line bundle} if there is an immersion $\mathcal{X}^{\mathcal{L}} \hookrightarrow \mathcal{P}ol^{\mathcal{L}}$.
An algebraic stack $\mathcal{X}^\Lambda$ is said to be a moduli stack of \emph{polarized varieties} if there is an immersion $\mathcal{X}^\Lambda \hookrightarrow \mathcal{P}ol^\Lambda$. 
\end{definition}

\begin{example} \label{ex:BSpol} Let $\mathcal{B}_{(r;n)}$ denote the locus of $\mathcal{P}ol^{\mathcal{L}}$ where the universal object $(u: \mathcal{U} \to \mathcal{P}ol^{\mathcal{L}}, L)$  is a relative Brauer-Severi scheme of dimension $n+1$ such that $L$ has degree $r$ on the geometric fibers of $u$. This is open since it coincides with the locus where the relative tangent bundle $\mathcal{T}_{X/S}$ is a relatively ample vector bundle of rank $n+1$ and $u_*L$ is locally free of rank ${n+1+r}\choose{r}$, by a theorem of Mori \cite[Thm. 8]{Mori79} (see also \cite[Prop.~4.4]{hartshorne1966ample}). 
\end{example}

\begin{example} \label{ex:k3s} Let $\mathcal{F}_{2}$ denote the category of \emph{degree two polarized K3 surfaces}. More precisely, the groupoid $\mathcal{F}_2(T)$ consists of pairs $(X \to T, \lambda)$ where $X \to T$ is a proper smooth morphism whose  fibres are K3 surfaces and $\lambda \in \Pic_{X/T}(T)$ is locally representable by an ample line bundle $L$ satisfying $L^2=2$. Morphisms are cartesian diagrams that preserve the section of $\Pic_{X/T}(T)$. This is an example of a moduli stack of polarized varieties (see \cite[4.2.1 and 4.3.3]{Rizov}). Moduli stacks of K3 surfaces of higher degree are also of this form (see \cite{Rizov}).\end{example}

\begin{remark} \label{rem:correslb} 
Given a moduli stack of varieties with a polarizing line bundle $\mathcal{X}^{\mathcal{L}}$, we may associate to it a moduli stack of polarized varieties $\mathcal{X}^\Lambda$ (see \cite[4.2]{abramovich2011stable}). Indeed, define $\mathcal{X}^\Lambda$ to be the rigidification $\mathcal{X}^{\mathcal{L}}\fatslash \text{ } \mathbb{G}_m$ where $\mathbb{G}_m$ acts by scalar multiplication on the polarizing line bundles. This yields a $\mathbb{G}_m$-gerbe morphism $\mathcal{X}^{\mathcal{L}} \to \mathcal{X}^\Lambda$ sending $(X \to S, L) \mapsto (X \to S, [L])$, where $[L]$ denotes the class of $L$ in $\Pic_{X/S}$.

Conversely, given a moduli stack of polarized varieties $\mathcal{X}^{\Lambda}$, the universal object $(\mathcal{U} \to \mathcal{X}^{\Lambda}, \lambda)$ induces a morphism $\lambda: \mathcal{X}^{\Lambda} \to \Pic_{\mathcal{U}/\mathcal{X}}$. We set 
\[\mathcal{X}^{\mathcal{L}}=\mathcal{X}^{\Lambda} \times_{\Pic_{\mathcal{U}/\mathcal{X}^{\Lambda}}} \textbf{Pic}_{\mathcal{U}/\mathcal{X}^{\Lambda}},\]
where $\textbf{Pic}_{\mathcal{U}/\mathcal{X}^{\Lambda}} \to \Pic_{\mathcal{U}/\mathcal{X}^{\Lambda}}$ is the natural $\mathbb{G}_m$-gerbe (see \cite[Tags 0DME and 0DNH]{stacks-project}).  This yields a moduli stack of varieties with polarizing line bundles $\mathcal{X}^{\mathcal{L}}$. These two processes yield a correspondence between moduli stacks of polarized varieties and moduli stacks of varieties with polarizing line bundle. 
\end{remark}

A polarized variety $(X \to S, \lambda)$ does not canonically induce a map to projective space in general, but as the following  shows, there is a canonical map to a Brauer-Severi scheme. This will allow us to interpret certain moduli stacks of polarized varieties as moduli stacks of ramified coverings (e.g.~Proposition \ref{prop:k3s}), and vice-versa.

\begin{proposition} \label{prop:factorBS} 
Let $\pi: \mathcal{X}^{\mathcal{L}} \to \mathcal{X}^\Lambda$ be the $\mathbb{G}_m$-gerbe map in Remark \ref{rem:correslb} and suppose $(u: \mathcal{U} \to \mathcal{X}^{\mathcal{L}}, L)$ and $(v: \mathcal{V} \to \mathcal{X}^{\Lambda}, \lambda)$ are the corresponding universal objects. Assume that $u_*L$ is locally free of rank $r+1$, compatible with arbitrary base change, and the natural map $u^*u_*L \to L$ is surjective. Then there is a factorization
\[\mathcal{V} \to \mathcal{P} \to \mathcal{X}^{\Lambda}\]
where $\mathcal{P}$ is a relative Brauer-Severi scheme of dimension $r$ over $\mathcal{X}^{\Lambda}$. Moreover, the restriction of $[\mathcal{O}_{\mathcal{P}}(1)] \in \Pic_{\mathcal{P}/\mathcal{X}^{\Lambda}}(\mathcal{X}^{\Lambda})$ to $\mathrm{Pic}_{\mathcal{V}/\mathcal{X}^{\Lambda}}(\mathcal{X}^{\Lambda})$ is $\lambda$. 
\end{proposition}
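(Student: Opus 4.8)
The plan is to construct the Brauer--Severi scheme $\mathcal{P}$ directly from the locally free sheaf $v_*\lambda$ on $\mathcal{X}^\Lambda$ — or rather, from the twisted form thereof — and then check that the map $\mathcal{V}\to\mathcal{X}^\Lambda$ factors through it. First I would pass to the cover $\pi:\mathcal{X}^{\mathcal{L}}\to\mathcal{X}^\Lambda$, where the polarization is represented by an honest line bundle $L$ on $\mathcal{U}$. By hypothesis $\mathcal{E}:=u_*L$ is locally free of rank $r+1$ and its formation commutes with arbitrary base change, and $u^*u_*L\to L$ is surjective; therefore the standard universal property of $\mathbb{P}(\mathcal{E})$ (in Grothendieck's convention, $\Proj$ of the symmetric algebra, classifying line-bundle quotients) yields a morphism $\mathcal{U}\to\mathbb{P}(\mathcal{E})$ over $\mathcal{X}^{\mathcal{L}}$ pulling $\mathcal{O}(1)$ back to $L$. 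This takes care of everything after base change along the gerbe $\pi$; the real content is descending $\mathbb{P}(\mathcal{E})$ down to $\mathcal{X}^\Lambda$, where no such line bundle $L$ and hence no such sheaf $\mathcal{E}$ exists globally.

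For the descent, I would use that $\pi:\mathcal{X}^{\mathcal{L}}\to\mathcal{X}^\Lambda$ is a $\mathbb{G}_m$-gerbe, with $\mathbb{G}_m$ acting on $L$ by scaling. Under this action $\mathcal{E}=u_*L$ is a weight-one sheaf, so it is a \emph{twisted} sheaf on $\mathcal{X}^\Lambda$ relative to the gerbe class; equivalently, $\mathcal{E}$ carries a $\mathbb{G}_m$-linearization of weight $1$. Projectivization kills the scaling: $\mathbb{P}(\mathcal{E})$ with its induced $\mathbb{G}_m$-action has the $\mathbb{G}_m$ acting trivially on the base and acting on $\mathcal{O}(1)$ with some fixed weight, so $\mathbb{P}(\mathcal{E})$ descends along the gerbe $\pi$ to a relative Brauer--Severi scheme $\mathcal{P}\to\mathcal{X}^\Lambda$ of relative dimension $r$. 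Concretely, one can either invoke $\mathbf{fppf}$ descent for the (representable, proper, smooth) morphism $\mathbb{P}(\mathcal{E})\to\mathcal{X}^{\mathcal{L}}$ equipped with the canonical descent datum coming from the two projections $\mathcal{X}^{\mathcal{L}}\times_{\mathcal{X}^\Lambda}\mathcal{X}^{\mathcal{L}}\cong \mathcal{X}^{\mathcal{L}}\times B\mathbb{G}_m$ (the isomorphism of the two pullbacks of $\mathbb{P}(\mathcal{E})$ being exactly the statement that projectivization is insensitive to twisting by a line bundle), or phrase it sheaf-theoretically: $\mathcal{P}$ is the Brauer--Severi scheme attached to the Azumaya-type gerbe of trivializations of $\mathcal{E}$. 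Either way, the line bundle $\mathcal{O}_{\mathbb{P}(\mathcal{E})}(1)$ does not descend as a line bundle, but its class in the relative Picard sheaf $\Pic_{\mathcal{P}/\mathcal{X}^\Lambda}$ does, giving the claimed $[\mathcal{O}_{\mathcal{P}}(1)]\in\Pic_{\mathcal{P}/\mathcal{X}^\Lambda}(\mathcal{X}^\Lambda)$.

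Finally I would assemble the factorization $\mathcal{V}\to\mathcal{P}\to\mathcal{X}^\Lambda$: the morphism $\mathcal{U}\to\mathbb{P}(\mathcal{E})$ over $\mathcal{X}^{\mathcal{L}}$ is $\mathbb{G}_m$-equivariant by construction (both sides carry the scaling action, compatibly with $u^*u_*L\to L$), hence descends to a morphism $\mathcal{V}\to\mathcal{P}$ over $\mathcal{X}^\Lambda$ — here I use that $\mathcal{V}\to\mathcal{X}^\Lambda$ is obtained from $\mathcal{U}\to\mathcal{X}^{\mathcal{L}}$ by the same gerbe descent, since $\mathbb{G}_m$ acts trivially on $X$ itself. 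Pulling back the descended class $[\mathcal{O}_{\mathcal{P}}(1)]$ along $\mathcal{V}\to\mathcal{P}$ recovers $[L]$ on $\mathcal{U}$ weight-one-equivariantly, i.e.\ recovers $\lambda\in\Pic_{\mathcal{V}/\mathcal{X}^\Lambda}(\mathcal{X}^\Lambda)$ after descent, which is the last assertion. I expect the main obstacle to be the careful bookkeeping of $\mathbb{G}_m$-weights and the verification that the natural descent datum on $\mathbb{P}(\mathcal{E})$ is effective — that is, making precise and rigorous the slogan ``$\mathbb{P}(\mathcal{E})$ is insensitive to twisting $\mathcal{E}$ by a line bundle'' in the relative/stacky setting, and identifying the resulting descent with a genuine relative Brauer--Severi scheme (as opposed to merely a smooth proper algebraic space with Brauer--Severi fibres); for this one can cite the equivalence between Brauer--Severi schemes and $\mathbb{G}_m$-gerbes together with $\mathrm{PGL}_{r+1}$-torsor descent, e.g.\ via \cite[Tags 0DME, 0DNH]{stacks-project}.
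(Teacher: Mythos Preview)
Your proposal is correct and follows essentially the same route as the paper: construct $\mathcal{U}\to\mathbb{P}(u_*L)\to\mathcal{X}^{\mathcal{L}}$ from the surjection $u^*u_*L\to L$, observe that $L$ (and hence $u_*L$) is $1$-twisted for the $\mathbb{G}_m$-banding, and then descend along the gerbe $\pi$. The paper packages your descent step as a one-word invocation of ``rigidification with respect to $\mathbb{G}_m$'' (citing Lieblich's twisted-sheaf formalism), whereas you spell out the same thing via $\mathbb{G}_m$-equivariance and fppf descent data; these are the same argument, and your concern about effectivity is handled automatically by the rigidification machinery since $\mathbb{P}(u_*L)\to\mathcal{X}^{\mathcal{L}}$ is already representable.
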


\begin{proof} 
Note that $\mathcal{U} \cong \mathcal{V} \times_{\mathcal{X}^{\Lambda}} \mathcal{X}^{\mathcal{L}}$ and that $L$ is $1$-twisted with respect to the $\mathbb{G}_m$-banding on $\mathcal{X}^{\mathcal{L}}$ (see \cite[3.1.1.1]{lieblich2008twisted}). By hypothesis we obtain a morphism
\[\mathcal{U} \to \mathbb{P}(u_*L) \to \mathcal{X}^{\mathcal{L}}\]
The rigidification of these stacks with respect to $\mathbb{G}_m$ yields the desired result. The last statement follows because $\mathcal{O}_{ \mathbb{P}(u_*L)}(1)|_{\mathcal{U}}=L$. 
\end{proof}

\subsection{Stacks of divisorial pairs}

We now introduce algebraic stacks which parametrize pairs $(X,H)$ where $X$ is a variety and $H \subset X$ is an ample Cartier divisor. 

\begin{definition} \label{def:pairs} 
Let $\mathcal{X}$ denote a moduli stack of varieties with polarizing line bundle and let $(u: \mathcal{U} \to \mathcal{X}, L)$ denote its universal object. Suppose that $u_*L$ is locally free and compatible with arbitrary base change on $\mathcal{X}$. An open subset of $\mathbb{V}^{\circ}(u_*L)$ is said to be a moduli stack of \emph{divisorial pairs}.
\end{definition}

\begin{remark} \label{rem:explain2} 
Unravelling the definition, the objects of a moduli stack of divisorial pairs are triples $(f: Y \to S, L, \sigma: \mathcal{O}_Y \to L)$, where $f$ is a smooth proper morphism of finite presentation with geometrically connected fibers, $L$ is a relatively ample line bundle on $Y/S$, $f_*L$ commutes with base change on $S$, and $\sigma$ is injective and remains so on every fibre of $f$. A morphism between triples $(Y' \to S', L', \sigma': \mathcal{O}_{Y'} \to L') \to (Y \to S, L, \sigma: \mathcal{O}_Y \to L)$ over $S' \to S$ is a pair $(f,\phi)$ where

    \begin{center}
    \begin{tikzcd}
  Y' \arrow[r, "f"] \arrow[d] & Y \arrow[d] \\
 S' \arrow[r]
& S 
\end{tikzcd}
\end{center}
is cartesian and $\phi: f^*L \to L'$ is an isomorphism sending $f^*\sigma$ to $\sigma'$. By considering the vanishing of the section $\sigma$, we see that this is equivalent to the category parametrizing objects $(f: Y \to S, i: H \hookrightarrow Y)$ where $f$ is a proper smooth morphism of finite presentation with geometrically connected fibers, $i$ is a locally principal closed immersion which is an ample Cartier divisor on every fibre $Y_s$ (equivalently, $H$ is a relatively ample Cartier divisor which is flat over $S$ by \cite[Tag 062Y]{stacks-project}), and where $f_*\mathcal{O}_Y(H)$ commutes with base change on $T$. A morphism $(f': Y' \to S', i': H' \to Y') \to (f: Y \to S, i: H \to Y)$ is a pair of morphisms $(f,g)$ making all squares in the following diagram cartesian. 

    \begin{center}
    \begin{tikzcd}
 H' \arrow[d,"g"] \arrow[r, "i"] & Y' \arrow[r] \arrow[d, "f"] & S' \arrow[d] \\
 H \arrow[r, "i'"] & Y \arrow[r]
& S
\end{tikzcd}
\end{center}
 \end{remark}

\begin{example} \label{ex:BS} Let $\mathcal{C}_{(r;n)}$ denote the stack parametrizing pairs $(f: P \to S, j: H \hookrightarrow P)$ where $f$ is a Brauer-Severi scheme of dimension $n+1$, $j$ is a closed immersion which is the inclusion of a Cartier divisor of degree $r$ on every fibre, and $H$ is smooth over $S$. Then $\mathcal{C}_{(r;n)}$ is a moduli stack of divisorial pairs. Indeed, note that $\mathcal{C}_{(r;n)}$ can be identified with an open subset of $\mathbb{V}^{\circ}(u_*L)$ over $\mathcal{B}_{(r;n)}$ (see Example \ref{ex:BSpol}). In fact, this is a well-studied object and we give another description in \S \ref{section:pres}. \end{example}

\subsection{Uniform cyclic covers}
Branched covers of varieties are ubiquitous in algebraic geometry. We consider the distinguished subclass consisting of uniform cyclic covers. Their rigid structure allows one to interpolate between the branch locus and the cover in a way we will make precise. We will define the stack of such covers, building on the case of cyclic covers of projective spaces treated in \cite{ArsieVistoli}. We begin with \cite[Def.~2.1]{ArsieVistoli}.

\begin{definition} \label{def:cyclic_cover}
	Let $Y$ be a scheme. A \emph{uniform cyclic cover of degree $r$} of $Y$ consists of a morphism of schemes $f : X \to Y$ together with an action of the group
scheme $\mu_r$ on $X$, such that for each point $y \in Y$, there is an affine neighbourhood $V = \Spec R$ of $y$, together with an element $h \in R$ that is not a zero divisor, and
an isomorphism of $V$-schemes $f^{-1}(V) \cong \Spec R[x]/(x^r - h)$ which is $\mu_r$-equivariant,
when the right hand side is given the obvious action.
\end{definition}

Throughout the paper, for brevity we often simply write ``cyclic cover'' instead of ``uniform cyclic cover''.

\begin{remark} \label{rem:linebundle} To  a cyclic cover $f: X \to Y$ is associated a line bundle $\mathcal{L}$, given by the subsheaf of $f_{\ast}\mathcal{O}_X$ on which $\mu_r$ acts via scalar multiplication. Indeed, the $\mu_r$-action on $X$ over $Y$ is equivalent to a $\mathbb{Z}/r\mathbb{Z}$-grading 
\[f_{\ast}\mathcal{O}_X=\mathcal{O}_Y \oplus \mathcal{L} \oplus \mathcal{L}^{2} \oplus \dots \oplus \mathcal{L}^{r-1} \]  on the algebra $f_{\ast}\mathcal{O}_X$ 
via the eigensheaf decomposition.
In particular, there is an injective map $\mathcal{L}^{r} \to \mathcal{O}_Y$. The \emph{branch divisor} of the cover $f$ is defined to be the Cartier divisor associated to the sheaf of ideals given by the image of $\mathcal{L}^{r}$ in $\OO_Y$. See \cite[\S 2]{ArsieVistoli} for a thorough treatment of the structure of such covers.
\end{remark}

\begin{remark} \label{rem:rth_root}
	The data of a cyclic cover over a scheme $Y$  branched over a Cartier divisor $H \subset Y$ is the same as the data of an $r$th root of the line bundle $\O(H)$ and a chosen section $s \in \O(H)$ defining $H$. Indeed, if $\mathcal{L}$ is such a root, we give the module
	\[\mathcal{O}_Y \oplus \mathcal{L}^{-1} \oplus \dots \mathcal{L}^{1-r}\]
	a $\mathbb{Z}/r\mathbb{Z}$-graded $\mathcal{O}_Y$-algebra structure $\mathcal{A}_s$ by defining multiplication using the morphism $s^{\vee}: \mathcal{L}^{-r}=\mathcal{O}(-H) \to \mathcal{O}_Y$. Then $\Spec_Y \mathcal{A}_s \to Y$ is a cyclic cover branched over $H$. If $s'$ is another section defining $H$, then one can check that $\mathcal{A}_s \simeq \mathcal{A}_{s'}$ as $\mathbb{Z}/r\mathbb{Z}$-graded $\mathcal{O}_Y$-algebras exactly when $\frac{s}{s'}$ admits an $r$th root in $\mathcal{O}_Y^{\times}$. Thus, different choices of $s$ can yield non-isomorphic cyclic covers which share the same branching data. Although this ambiguity complicates the correspondence between cyclic covers and their branch loci, we shall clarify this relationship by studying their moduli.
\end{remark}

\subsection{Relative uniform cyclic covers}

\begin{definition} \label{def:relative} Fix a scheme $S$ and let $Y$ be a flat $S$-scheme of finite presentation. A \emph{relative uniform cyclic cover} of degree $r$ of $Y$ is a uniform cyclic cover $f: X \to Y$ of degree $r$ such that the branch divisor of $f$ is flat over $S$. A \emph{morphism} of relative uniform cyclic covers $X \to Y$ and $X' \to Y$ is a $\mu_r$-equivariant morphism over $Y$.
\end{definition}

\begin{remark} \label{rem:flatness}
 By \cite[Tag 062Y]{stacks-project}, $S$-flatness of the branch divisor is equivalent to the injection $\mathcal{L}^{r} \to \OO_Y$ remaining injective upon restriction to every fibre $Y_s$. In particular, this implies forming the branch divisor (as defined in \ref{rem:linebundle}) of a relative uniform cyclic cover $X \to Y \to S$ is compatible with arbitrary base change on $S$. This implies that relative uniform cyclic covers are stable under arbitrary base change. As such, a relative uniform cyclic cover over $S$ can be thought of as a family of uniform cyclic covers (see Definition \ref{def:cyclic_cover}) parametrized by $S$. 
 \end{remark} 

\begin{example}  \label{ex:branch_degree}
Given a scheme $S$, a Brauer-Severi scheme $P\to S$, and a relative uniform cyclic cover $f:X\to P$ of degree $r$ with associated line bundle $\mathcal{L}$, the branch divisor has degree $rd$ where $d$ is the degree of $\mathcal{L}^{-1}$ on every geometric fibre of $P\to S$.
\end{example}

The following proposition is well-known. It is stated in \cite[Prop.~2.5]{ArsieVistoli} but is missing the hypothesis on $r$ being invertible on $S$; we give a proof with the corrected hypotheses for completeness.

\begin{proposition} \label{rem:smoothness}
Let $Y \to S$ be a smooth morphism of schemes and $g: X \to Y$ be a relative uniform cyclic cover of degree $r$ over $S$ with $r \in \mathcal{O}_S^{\times}$. Then $X$ is smooth over $S$ if and only if the branch divisor $D$ of $g$ is smooth over $S$.
\end{proposition}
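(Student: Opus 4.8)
## Proof proposal for Proposition~\ref{rem:smoothness}

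The plan is to reduce the statement to a purely local computation on the cyclic cover, exploiting the explicit local form $\Spec R[x]/(x^r - h)$ guaranteed by Definition~\ref{def:cyclic_cover}. Since smoothness over $S$ can be checked locally on the source and target, and since both $Y \to S$ and $X \to Y$ are of finite presentation, I would fix a point $\xi \in X$ with image $y \in Y$ and $s \in S$, and pass to an affine neighbourhood $V = \Spec R$ of $y$ over which $g^{-1}(V) \cong \Spec R[x]/(x^r - h)$ equivariantly, with $h \in R$ a nonzerodivisor whose vanishing locus is the branch divisor $D \cap V$. Because $r \in \O_S^\times$, the extension $R \to R[x]/(x^r-h)$ is finite locally free of rank $r$, and it is étale exactly on the locus where $h$ is invertible (the complement of $D$); there the equivalence of the two conditions is immediate since $X \to Y$ is étale there and $Y \to S$ is smooth. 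So the content is concentrated along the ramification locus, i.e. where $x = 0$ and $h = 0$.

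The key step is the local Jacobian computation along the ramification divisor. Working étale-locally I may assume $Y \to S$ is a standard smooth morphism, so $\O_{Y}$ looks like $\O_S[t_1,\dots,t_n]$ near $y$ (after base change and localization); then $\O_X$ near $\xi$ is $\O_S[t_1,\dots,t_n,x]/(x^r - h)$ with $h = h(t_1,\dots,t_n) \in \O_S[t]$ vanishing at the image of $\xi$. The single defining equation $F := x^r - h$ has partial derivatives $\partial F/\partial x = r x^{r-1}$ and $\partial F/\partial t_i = -\partial h/\partial t_i$. At a point with $x = 0$ (forced by $h = 0$ and $r$ invertible, since $x^r = h = 0$ in the residue field implies $x = 0$), we have $\partial F/\partial x = 0$, so $X \to S$ is smooth at $\xi$ if and only if the Jacobian criterion is met using the remaining partials, i.e. if and only if some $\partial h/\partial t_i$ is nonzero at $\xi$ — equivalently, $h$ cuts out a divisor $D$ in $Y$ that is smooth over $S$ at $y$. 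This is exactly the assertion: $X/S$ is smooth at $\xi$ $\iff$ $D/S$ is smooth at $y$. I would phrase the smoothness of $D$ over $S$ via the fibrewise Jacobian criterion (\cite[Tag 01V9]{stacks-project} or the analogous statement for relative complete intersections), noting that $D \hookrightarrow Y$ is a relative effective Cartier divisor flat over $S$ by Remark~\ref{rem:flatness}, so smoothness of $D/S$ is detected fibrewise and, fibre by fibre, by $dh \neq 0$ along $D$.

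To globalize: over the unramified locus $Y \setminus D$ the cover is étale so $X \setminus g^{-1}(D) \to S$ is smooth iff $Y\setminus D \to S$ is, which holds since $Y/S$ is smooth — and conversely this imposes no condition. Over a neighbourhood of the ramification locus the above local computation shows the pointwise equivalence. Since smoothness is an open condition and the ramification locus $g^{-1}(D)$ surjects onto $D$, the two open loci "$X/S$ smooth" and "$g^{-1}(D/S$ smooth$)$" match up, giving: $X/S$ is smooth everywhere $\iff$ $D/S$ is smooth everywhere. One subtlety worth flagging is that I should not assume $Y \to S$ is literally standard smooth but only smooth; the reduction to that case is via the local structure theorem for smooth morphisms, and one must check the cyclic-cover description of $X$ is compatible with that étale localization on $Y$, which it is because relative uniform cyclic covers are stable under base change (Remark~\ref{rem:flatness}).

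The main obstacle I anticipate is bookkeeping rather than conceptual: making the passage to the standard-smooth local model and the equivariant local model $\Spec R[x]/(x^r-h)$ simultaneously, and being careful that "$D$ smooth over $S$" is correctly interpreted (it is a relative Cartier divisor, automatically flat over $S$, so the fibrewise Jacobian criterion applies cleanly). The hypothesis $r \in \O_S^\times$ is used twice and essentially: once to know $R \to R[x]/(x^r-h)$ is finite flat and generically étale, and once to force $x = 0$ on the ramification locus (so that $\partial_x F = r x^{r-1}$ genuinely vanishes there and the derivative in the $x$-direction contributes nothing) — without it the characteristic-$p$ pathologies would break the equivalence.
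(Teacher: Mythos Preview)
Your proposal is correct and follows essentially the same approach as the paper: reduce to the local model $\Spec R[x]/(x^r-h)$, observe that the cover is \'etale away from $D$, and along $D$ compute the Jacobian of $F = x^r - h$ to see that smoothness of $X/S$ is equivalent to $dh \neq 0$, i.e.\ smoothness of $D/S$. The only cosmetic difference is in the localization step: the paper first reduces to $S$ a geometric point (using flatness of $X$ and $D$ over $S$) and then passes to the completed local ring via the Cohen structure theorem to get coordinates $k[[y_1,\dots,y_n]]$, whereas you use the local structure theorem for smooth morphisms to pass \'etale-locally to a standard smooth chart $\O_S[t_1,\dots,t_n]$; both routes lead to the same Jacobian computation.
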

\begin{proof}   Since $X$ and the branch divisor of $g$ are $S$-flat, we may assume $S$ is a geometric point. Moreover, we work locally on $Y$ so we will assume that $X=\Spec R[x]/(x^r-f)$ where $Y=\Spec R$ and $f \in R$ is not a zero divisor.

Away from $V(f)=D$, the morphism $g$ is \'etale because $r$ is invertible, hence smoothness of $X$ is automatic here. Thus, we only have to consider points lying over $D$.  Moreover, we may pass to the complete local ring of a closed point on $Y$ lying along $D$ and so by the Cohen structure theorem (see \cite[Tag 0C0S]{stacks-project}) and the smoothness of $Y$, we can write $g$ as the natural morphism $X=\Spec R[x]/(x^r-f) \to \Spec R$ where $f \in R=k[[y_1,...,y_n]]$. Now, consider the partial derivatives of the equation defining $X$: $\partial_x(x^r-f)=rx^{r-1}$ and $\partial_{y_i}(x^r-f)=\partial_{y_i}(-f)$. Then, these $n+1$ equations have a common zero $(r_1,...,r_n,b)$ which lies along $V(x^r-f)$ if and only if $b=0$ (since $r \in R^{\times}$), $f(r_1,...,r_n)=0$, and $\partial_{y_i}(f)(r_1,...,r_n)=0$ for every $i$. In other words, $X$ is not smooth if and only if $V(f)=D$ is not smooth. \end{proof}

\begin{remark} \label{rem:alternatecyclic} There is another well-known description of a relative uniform cyclic cover $X \stackrel{f}{\to} Y \to T$. The graded $\mathcal{O}_Y$-algebra structure on $f_*\mathcal{O}_X$ allows us to extract a line bundle $\mathcal{L}$ (as in Remark \ref{rem:linebundle}) and an injection $s: \mathcal{O}_Y \to \mathcal{L}^{-r}$ which remains injective on every fibre $Y_t$. In fact, this data recovers $X \to Y \to T$. Indeed, let $\mathcal{H}(Y/T,r)$ denote the groupoid of relative uniform cyclic covers of $Y$ of degree $r$ (with $\mu_r$-equivariant isomorphisms over $T$). Moreover, let $\mathcal{H}'(Y/T,r)$ denote the groupoid of pairs $(L, s: \mathcal{O}_Y \to L^{r})$ (where $s$ is injective and remains so on all geometric fibres $Y_t$) and where morphisms are isomorphisms $\alpha: L \to L'$ so that $\alpha^{\otimes r}$ is compatible with $s$ and $s'$. By \cite[Prop.~2.2]{ArsieVistoli}, there is a natural equivalence of categories $\mathcal{H}(Y/T,r) \simeq \mathcal{H}'(Y/T,r)$. \end{remark}

Thus, the data of a relative uniform cyclic cover $X \to Y \to T$ (of degree $r$) is equivalent to the data $(\pi: Y \to T, L, s: \mathcal{O}_Y \to L^{r})$ where $Y \to T$ is the base family, $L$ is a line bundle on $Y$ and $s$ is a global section of $L^{r}$ which doesn't vanish on any fibre $Y_t$. Equivalently, $s$ is a global section of $\pi_*L^{r}$ which does not meet the zero section. This motivates the following definition.

\subsection{Stacks of  cyclic covers}

\begin{definition} \label{def:stackcyclic} Let $\mathcal{X}^{\mathcal{L}}$ be a moduli stack of varieties with a polarizing line bundle. Let $(u: \mathcal{U} \to \mathcal{X}^{\mathcal{L}}, L)$ be the universal object and suppose that $u_* L^r$ is locally free and compatible with arbitrary base change on $\mathcal{X}^{\mathcal{L}}$. Then an open subset of $\mathbb{V}^{\circ}(u_*L^{r})$ is said to be a moduli stack of \emph{cyclic covers of degree $r$}. 
\end{definition}

\begin{remark} \label{rem:explain} Unravelling the definition, one sees that such a stack parametrizes triples $(f: Y \to S, L, s: \mathcal{O}_X \to L^r)$ where $(Y \to S, L)$ satisfies the conditions in Definition \ref{def:canpol}, $f_*L^r$ is compatible with base change, and $s$ is injective on all the fibres. A morphism 
\[(Y' \to S', L', s': \mathcal{O}_X \to L'^r) \to (Y \to S, L, s: \mathcal{O}_X \to L^r)\]
is a pair $(f, \phi)$ where 

    \begin{center}
    \begin{tikzcd}
  Y' \arrow[r, "f"] \arrow[d] & Y \arrow[d] \\
 S' \arrow[r]
& S 
\end{tikzcd}
\end{center}

\noindent is cartesian and $\phi: f^*L \to L'$ is an isomorphism such that $\phi^{\otimes r}(f^*s)=s'$.

Equivalently, by Remark \ref{rem:alternatecyclic} (see also \cite[Remark 3.3]{ArsieVistoli}), such a stack parametrizes pairs $(f: Y \to S, p: X \to Y)$ where $f$ is a smooth proper morphism with geometrically connected fibers and $p$ is a relative uniform cyclic cover of degree $r$. Morphisms in the stack are morphisms of relative uniform cyclic covers (see Definition \ref{def:relative}). 
\end{remark}

\begin{example}\label{defn:ars}
 For $n$, $r$, and $d$ positive integers, we let $\mathcal{H}(n,r,d)$ be the stack of relative uniform cyclic covers of degree $r$ branched over hypersurfaces of degree $rd$ in $\mathbb{P}^n$ over $\mathbb{Z}$, as defined in     \cite[\S 3]{ArsieVistoli}.    
	We let $\mathcal{H}^{\textrm{sm}}(n,r,d)$ denote the stack of   such cyclic covers that are smooth. These are both examples of moduli stacks of cyclic covers (of degree $r$). Indeed, by Example \ref{ex:BSpol} there is a moduli stack of polarizing varieties $\mathcal{B}_{(d;n-1)}$ parametrizing pairs $(f: X \to S, L)$ where $f$ is a Brauer-Severi scheme of dimension $n$ and $L$ has degree $d$ on the fibres. If $(\pi: \mathcal{X} \to \mathcal{B}_{(d;n-1)}, L)$ is the universal object, then $\mathcal{H}(n,r,d)=\mathbb{V}^{\circ}(\pi_*L^{r})$. Moreover, $\mathcal{H}^{sm}(n,r,d)$ is an algebraic stack with finite diagonal over $\Spec \mathbb{Z}$ whenever $rd \geq 3$ (see \cite[Remark 4.3]{ArsieVistoli}).  
	\end{example}

The following new definition is key to the paper and introduces the universal stack of cyclic covers over a moduli stack of divisorial pairs.

\begin{definition} \label{def:universalbranch}
Let $\mathcal{Y}$ be a moduli stack of varieties with polarizing line bundle with  universal object $(\pi: \mathcal{U} \to \mathcal{Y}, \mathcal{L})$. Suppose $\mathcal{X} \subset \mathbb{V}^{\circ}(\pi_*\mathcal{L})$ is a moduli stack of divisorial pairs. The algebraic stack $\mathcal{X}_r$ formed by the Cartesian diagram
    \begin{equation} \label{diagrambranch}
    \begin{tikzcd} 
 \mathcal{X}_r \arrow[r, "\text{open}"] \arrow[d] & \mathbb{V}^{\circ}(\pi_*\mathcal{L}|_{\mathcal{Y}_r}) \arrow[r] \arrow[d] & \mathcal{Y}_r \arrow[r] \arrow[d] 
& \PicS_{\mathcal{U}/\mathcal{Y}} \arrow[d, "(\cdot)^{r}"] \\
\mathcal{X} \arrow [r,"\text{open}"] & \mathbb{V}^{\circ}(\pi_*\mathcal{L}) \arrow [r] & \mathcal{Y} \arrow[r, "\mathcal{L}"]
& \PicS_{\mathcal{U}/\mathcal{Y}}.
\end{tikzcd}
\end{equation}
is called the \emph{universal stack of cyclic covers (of degree $r$) over $\mathcal{X}$}. 
\end{definition} 

In the next theorem, we verify that this is indeed a moduli stack of cyclic covers, and moreover prove that the morphism $\mathcal{X}_r \to \mathcal{X}$ is proper \'etale. To guarantee non-emptiness of $\mathcal{X}_r$, we impose the following condition: We say that a moduli stack of divisorial pairs $\mathcal{X}$ is \emph{$r$-divisible} if for all objects $(X,D)$ in $\mathcal{X}$ over a field, the line bundle $\O_X(D)$ is divisible by $r$ over the algebraic closure.

\begin{theorem} \label{thm:properetale} 
Let $\mathcal{X}$ be a moduli stack of divisorial pairs over a scheme $S$.
\begin{enumerate}
	\item The universal stack of cyclic covers $\mathcal{X}_r$ is a moduli stack of cyclic covers.
	\item The morphism $\mathcal{X}_r \to \mathcal{X}$ is proper and quasi-finite.
	\item If $r \in \mathcal{O}_S^{\times}$, then $\mathcal{X}_r \to \mathcal{X}$ is \'etale.
	\item If $\mathcal{X}$ is $r$-divisible, then $\mathcal{X}_r \to \mathcal{X}$  is  surjective.
\end{enumerate}
\end{theorem}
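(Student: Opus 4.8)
The plan is to reduce the whole statement to the $r$-th power map on the relative Picard stack, where the geometry is classical. First, write $\mathcal{X}$ as an open substack of $\mathbb{V}^{\circ}(\pi_{*}\mathcal{L})$ over the moduli stack $\mathcal{Y}$ of varieties with polarizing line bundle with universal object $(\pi\colon\mathcal{U}\to\mathcal{Y},\mathcal{L})$, as in Definition~\ref{def:universalbranch}; since all squares in \eqref{diagrambranch} are cartesian one has $\mathcal{X}_{r}=\mathcal{X}\times_{\mathcal{Y}}\mathcal{Y}_{r}$, so $\mathcal{X}_{r}\to\mathcal{X}$ is the base change of $\mathcal{Y}_{r}\to\mathcal{Y}$, where $\mathcal{Y}_{r}=\mathcal{Y}\times_{\mathcal{L},\PicS_{\mathcal{U}/\mathcal{Y}},(\cdot)^{r}}\PicS_{\mathcal{U}/\mathcal{Y}}$ is the stack of $r$-th roots of $\mathcal{L}$. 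As properness, quasi-finiteness, étaleness and surjectivity are stable under base change, it suffices to prove (2)--(4) for $\mathcal{Y}_{r}\to\mathcal{Y}$. For (1) I would observe that the pulled-back family $(\mathcal{U}_{r}:=\mathcal{U}\times_{\mathcal{Y}}\mathcal{Y}_{r}\to\mathcal{Y}_{r},M)$ --- with $M$ the tautological $r$-th root, $M^{r}\cong\mathcal{L}|_{\mathcal{U}_{r}}$ --- has $M$ relatively ample (because $M^{r}$ is), that $\mathcal{Y}_{r}$ is the fibre product $\mathcal{P}ol^{\mathcal{L}}\times_{(\cdot)^{r},\mathcal{P}ol^{\mathcal{L}}}\mathcal{Y}$ for the $r$-th power map $(Y,N)\mapsto(Y,N^{r})$, whence $\mathcal{Y}_{r}\to\mathcal{P}ol^{\mathcal{L}}$ is an immersion and $\mathcal{Y}_{r}$ is a moduli stack of varieties with polarizing line bundle, and that $\mathrm{pr}_{*}M^{r}\cong\pi_{*}\mathcal{L}|_{\mathcal{Y}_{r}}$ is locally free and compatible with base change; hence $\mathcal{X}_{r}$, open in $\mathbb{V}^{\circ}(\mathrm{pr}_{*}M^{r})$ by \eqref{diagrambranch}, is a moduli stack of cyclic covers of degree $r$, and by Remark~\ref{rem:alternatecyclic} (cf.\ Remark~\ref{rem:rth_root} and \cite[Prop.~2.2]{ArsieVistoli}) a $T$-point of it is a divisorial pair $(Y\to T,D)\in\mathcal{X}(T)$ together with an $r$-th root of $\mathcal{O}_{Y}(D)$, equivalently the cyclic cover $X\to Y$ branched along $D$ it determines, sent by $\mathcal{X}_{r}\to\mathcal{X}$ to $(Y\to T,D)$. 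Part (4) is then immediate: if $\mathcal{X}$ is $r$-divisible, every point of $|\mathcal{X}|$ is represented by some $(X,D)$ over a field $k$ for which $\mathcal{O}_{X}(D)$ admits an $r$-th root over $\bar{k}$, and that root together with the canonical section of $\mathcal{O}(D)$ defines a $\bar{k}$-point of $\mathcal{X}_{r}$ above it.

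Next I would analyze $(\cdot)^{r}\colon\PicS_{\mathcal{U}/\mathcal{Y}}\to\PicS_{\mathcal{U}/\mathcal{Y}}$. Since $\mathcal{U}\to\mathcal{Y}$ is proper and smooth with geometrically connected fibres it is cohomologically flat in degree zero; thus $P:=\Pic_{\mathcal{U}/\mathcal{Y}}$ is an algebraic space, $\mathcal{P}:=\PicS_{\mathcal{U}/\mathcal{Y}}$ is algebraic, and $\gamma\colon\mathcal{P}\to P$ is a $\Gm$-gerbe. The morphism $(\cdot)^{r}$ lies over $[r]\colon P\to P$ and factors as
\[
\mathcal{P}\ \xrightarrow{\ a\ }\ \mathcal{Q}:=P\times_{[r],P,\gamma}\mathcal{P}\ \xrightarrow{\ \mathrm{pr}_{2}\ }\ \mathcal{P},\qquad a(N)=([N],N^{r}),
\]
where $\mathrm{pr}_{2}$ is the base change of the representable morphism $[r]\colon P\to P$ along $\gamma$, and $a$ is a gerbe banded by $\mu_{r}$: it is an fppf epimorphism (any object of $\mathcal{Q}$ lifts fppf-locally after extracting an $r$-th root of a line bundle pulled back from the base), and it induces $\Gm\xrightarrow{t\mapsto t^{r}}\Gm$, with kernel $\mu_{r}$, on automorphism groups over $\mathcal{Y}$. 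Equivalently, pulling back along the polarization, $\mathcal{Y}_{r}\to\mathcal{Y}$ is a $\mu_{r}$-gerbe over $\mathcal{Y}\times_{[\mathcal{L}],P,[r]}P$, the pull-back of $[r]\colon P\to P$. It then remains to prove that $[r]\colon P\to P$ is proper and quasi-finite, and étale when $r\in\mathcal{O}_{S}^{\times}$; granting this, and using that a $\mu_{r}$-gerbe is proper and quasi-finite over its base and --- when $r\in\mathcal{O}_{S}^{\times}$, $\mu_{r}$ being then finite étale --- étale (exactly the class of proper-étale-but-not-finite-étale morphisms noted in the Conventions), together with the stability of all three properties under composition and base change, one obtains (2) and (3) for $(\cdot)^{r}$, hence for $\mathcal{Y}_{r}\to\mathcal{Y}$, hence for $\mathcal{X}_{r}\to\mathcal{X}$.

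For $[r]\colon P\to P$ I would argue fibrewise first. Over a geometric point $\bar s$ of $\mathcal{Y}$ with $X=\mathcal{U}_{\bar s}$, the group $\Pic_{X/\bar s}$ is the disjoint union, over the finitely generated abelian group $\NS(X)$, of torsors under the proper group scheme $\Pic^{0}_{X/\bar s}$; its $r$-torsion $\Pic_{X/\bar s}[r]$ is finite (the $\NS$-part is $\NS(X)[r]$ and the identity-component part $\Pic^{0}_{X/\bar s}[r]$ is finite), so $[r]$ restricts to a finite morphism on each component, carries the component of class $\delta$ onto that of $r\delta$, and is therefore proper and quasi-finite; when $r$ is invertible, $[r]$ acts on $\mathrm{Lie}\,\Pic^{0}_{X/\bar s}$ as multiplication by $r$, an isomorphism, so $[r]$ is étale at the identity and, being a group homomorphism, étale. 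I would then deduce the statement over $\mathcal{Y}$ from the structure of the relative Picard space --- $\Pic^{0}_{\mathcal{U}/\mathcal{Y}}$ is proper over $\mathcal{Y}$, and $\Pic_{\mathcal{U}/\mathcal{Y}}/\Pic^{0}_{\mathcal{U}/\mathcal{Y}}$ is étale with finitely generated fibres over $\mathcal{Y}$ --- which reduces properness, quasi-finiteness and (for $r\in\mathcal{O}_{S}^{\times}$) étaleness to $[r]$ on the identity component, a finite morphism between group algebraic spaces proper over $\mathcal{Y}$ that is étale when $r\in\mathcal{O}_{S}^{\times}$.

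This last step is where I expect the main obstacle to lie: passing from the easy fibrewise behaviour of $[r]$ to the assertion that $[r]\colon\Pic_{\mathcal{U}/\mathcal{Y}}\to\Pic_{\mathcal{U}/\mathcal{Y}}$ is proper, quasi-finite, and --- for $r\in\mathcal{O}_{S}^{\times}$ --- étale over an arbitrary base $S$, given that $\Pic_{\mathcal{U}/\mathcal{Y}}$ is in general neither of finite type nor smooth over $\mathcal{Y}$. Making this precise requires careful use of cohomological flatness, the properness of the relative $\Pic^{0}$, the constructibility of the relative N\'eron--Severi space, and attention to positive and mixed characteristic; by contrast the reduction via \eqref{diagrambranch}, the moduli interpretation in (1), the gerbe factorization, and surjectivity in (4) are comparatively formal.
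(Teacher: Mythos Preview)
Your reduction is exactly the paper's: both arguments use the cartesian diagram \eqref{diagrambranch} to identify $\mathcal{X}_r \to \mathcal{X}$ with a base change of $(\cdot)^r$ on the Picard stack, and both dispose of (1) and (4) in the way you indicate. Your factorisation of $(\cdot)^r$ as a $\mu_r$-gerbe over (the pullback of) $[r]$ on the Picard \emph{space} is a valid substitute for the paper's device of passing to a smooth cover with a section and splitting $\PicS_{X/S} \cong \Pic_{X/S} \times_S B\mathbb{G}_{m,S}$ (Lemma~\ref{lem:picard}); the two decompositions are essentially equivalent and both reduce (2) to $[r]$ on the Picard space plus the fact that $\mu_r$-gerbes are proper and quasi-finite.

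The one place where the paper does something genuinely different---and slicker---is étaleness. You try to deduce (3) from étaleness of $[r]$ on the Picard space, arguing via the Lie algebra; but an isomorphism on tangent spaces gives only \emph{unramifiedness} when the group is not smooth, and flatness of $[r]$ on a possibly non-reduced $\Pic^0$ (which can occur in positive characteristic even with $r$ invertible) needs a separate argument---you flag this yourself. The paper sidesteps the Picard space entirely for (3): it observes that $(\cdot)^r$ on $\PicS_{\mathcal{U}/\mathcal{Y}} = \pi_* B\mathbb{G}_{m,\mathcal{U}}$ is the Weil restriction along $\pi$ of the $\mu_r$-gerbe $(\cdot)^r \colon B\mathbb{G}_{m,\mathcal{U}} \to B\mathbb{G}_{m,\mathcal{U}}$, which is formally étale once $r \in \mathcal{O}_S^\times$, and then checks the formal statement that Weil restriction preserves formally étale morphisms (Lemma~\ref{lem:weilrestriction}). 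This handles all characteristics uniformly and never touches the fine structure of $\Pic^0$.

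For properness and quasi-finiteness of $[r]$ on the Picard space the paper is also more concrete than your outline: rather than appealing to properness of the relative $\Pic^0$ and constructibility of the relative N\'eron--Severi sheaf (true, but requiring care over a non-Noetherian stacky base), it passes via Noetherian approximation to an affine scheme, invokes that $\Pic_{X/S}$ is then a separated scheme locally of finite type satisfying the existence part of the valuative criterion, cites Kleiman for $[r]$ being of finite type, and checks quasi-finiteness via finiteness of $\Pic^\tau_{X_s/k(s)}[r]$ on fibres. Your route through $\Pic^0$ and $\NS$ would work too, but the paper's references do the bookkeeping for you.
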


\begin{proof}
Let $(\pi: \mathcal{U} \to \mathcal{Y}, \mathcal{L})$ be the universal object of $\mathcal{Y}$ (see Definition \ref{def:universalbranch}).
 In   diagram \eqref{diagrambranch}, the stack $\mathcal{Y}_r$ is equivalent to a moduli stack of varieties with a polarizing line bundle and its universal object is of the form $(\mathcal{U}|_{\mathcal{Y}_r} \to \mathcal{Y}_r, \mathcal{M})$ with the property that $\mathcal{M}^{r} \simeq \mathcal{L}|_{\mathcal{Y}_r}$. Thus, $\mathcal{X}_r$ is isomorphic to an open subset of $\mathbb{V}^{\circ}(\mathcal{M}^{r})$ and is therefore a moduli stack of cyclic covers of degree $r$, which proves (1).
 
 For (4), surjectivity of $\mathcal{X}_r \to \mathcal{X}$ follows from the surjectivity of $\mathcal{Y}_r \to \mathcal{Y}$ which, in turn, follows by the $r$-divisibility of $\mathcal{X}$ (see Remark \ref{rem:rth_root}). To finish the proof it suffices to show that the $r$th power map on $\PicS_{\mathcal{U/\mathcal{Y}}}$ is proper quasi-finite and that it is \'etale  if $r\in \mathcal{O}_S^\times$. Indeed, the map $\mathcal{X}_r \to \mathcal{X}$ is the pullback of $(\cdot)^r$ (see Definition \ref{def:universalbranch}). For this, we may pass to a smooth cover of $\mathcal{Y}$. Thus, we may assume the map $\mathcal{U} \to \mathcal{Y}$ has a section and that $\mathcal{Y}$ is affine. In which case, as $\mathcal{U} \to \mathcal{Y}$ is representable by schemes, we have that $\mathcal{U}$ is also a scheme.  We then conclude by applying the following lemma.
 \end{proof}

\begin{lemma} \label{lem:picard} Fix a smooth proper morphism of schemes with geometrically connected fibres $\pi: X \to S=\Spec A$ which admits a section $\sigma$ and a relatively ample line bundle. Then the functor 
\[
	\PicS_{X/S} \to \Pic_{X/S} \times_S B\mathbb{G}_{m,S}, \quad L \mapsto ([L], \sigma^*L)
\]
is an equivalence of categories with inverse $([L],L') \mapsto L \otimes \pi^*\sigma^*L^{\vee} \otimes \pi^*L'$. Moreover, the morphism $(\cdot)^r: \PicS_{X/S} \to \PicS_{X/S}$
is proper quasi-finite and, if $r \in \mathcal{O}_S^{\times}$,   it is \'etale.
\end{lemma}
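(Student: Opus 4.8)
The plan is to prove the equivalence of categories by an explicit computation with line bundles, and then to read off the properties of the $r$-th power map from the product decomposition that the equivalence induces. For the equivalence: since $\pi$ is smooth and proper with geometrically connected (hence geometrically integral) fibres, $\pi_*\mathcal{O}_X=\mathcal{O}_S$ holds universally, and, together with the section $\sigma$, this yields for every $S$-scheme $T$ the standard identification $\Pic_{X/S}(T)=\Pic(X_T)/\pi_T^*\Pic(T)$ as well as the fact that the automorphism group in $\PicS_{X/S}(T)$ of a line bundle on $X_T$ is $\Gamma(X_T,\mathcal{O}^\times)=\Gamma(T,\mathcal{O}^\times)$. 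Granting this, I would check that $([L],L')\mapsto L\otimes\pi_T^*\sigma_T^*L^\vee\otimes\pi_T^*L'$ is well defined --- independent of the representative $L$ of the class $[L]$, since replacing $L$ by $L\otimes\pi_T^*N$ changes $L\otimes\pi_T^*\sigma_T^*L^\vee$ by $\pi_T^*N\otimes\pi_T^*N^\vee\cong\mathcal{O}$ --- and is a two-sided inverse to $L\mapsto([L],\sigma_T^*L)$, using $\pi_T\circ\sigma_T=\mathrm{id}$ and the fact that $L\otimes\pi_T^*\sigma_T^*L^\vee$ is the unique $\sigma_T$-rigidified line bundle in the class $[L]$. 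Functoriality in $T$ and compatibility with automorphisms ($u\in\Gamma(T,\mathcal{O}^\times)$ corresponding to $\sigma_T^*u=u$) are routine.

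The equivalence is monoidal for the tensor product of line bundles and the product group structure on $\Pic_{X/S}\times_S B\mathbb{G}_{m,S}$, so $(\cdot)^r$ is carried to $[r]_{\Pic_{X/S}}\times(\cdot)^r_{B\mathbb{G}_{m,S}}$; since properness, quasi-finiteness, and \'etaleness are each stable under products of morphisms over $S$, it suffices to treat the two factors separately. For the $B\mathbb{G}_{m,S}$-factor: the Kummer sequence $1\to\mu_r\to\mathbb{G}_m\xrightarrow{r}\mathbb{G}_m\to1$ is exact for the fppf topology, so pulling $(\cdot)^r\colon B\mathbb{G}_{m,S}\to B\mathbb{G}_{m,S}$ back along the atlas $S\to B\mathbb{G}_{m,S}$ yields the morphism $B\mu_{r,S}\to S$. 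As $\mu_{r,S}$ is finite locally free over $S$, the $\mu_r$-torsor $S\to B\mu_{r,S}$ is a proper, faithfully flat cover, and the composite $S\to B\mu_{r,S}\to S$ is the identity; fppf descent of properness, of quasi-finiteness, and --- when $r\in\mathcal{O}_S^\times$, so that $\mu_{r,S}$ is finite \'etale --- of \'etaleness along this cover shows that $B\mu_{r,S}\to S$ is proper and quasi-finite, and \'etale when $r$ is a unit (it is non-representable, hence proper \'etale but not finite \'etale).

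For the factor $[r]\colon\Pic_{X/S}\to\Pic_{X/S}$ I would invoke the structure theory of the relative Picard scheme of the projective morphism $\pi$: the subgroup scheme $\Pic^\tau_{X/S}$ of fibrewise numerically trivial classes is open and closed in $\Pic_{X/S}$ and is \emph{proper} over $S$, while the quotient $Q:=\Pic_{X/S}/\Pic^\tau_{X/S}$ is a separated \'etale group algebraic space over $S$ which, \'etale-locally on $S$, is the constant group scheme on a finitely generated free abelian group (the theorem of the base). On $Q$ the map $[r]$ is injective with image the open and closed subgroup scheme $rQ$, hence factors as an isomorphism onto $rQ$ followed by a clopen --- so closed --- immersion, and is therefore finite and \'etale. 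On $\Pic^\tau_{X/S}$ the map $[r]$ is proper, its source being $S$-proper and its target $S$-separated. Moreover $\ker([r])$ lies in $\Pic^\tau_{X/S}$ (torsion classes are numerically trivial), so it is proper over $S$, and it is quasi-finite over $S$: over each geometric point it is the $r$-torsion of a proper group scheme over a field, which is finite because $[r]$ restricts to an isogeny on any abelian subvariety, so a proper group scheme whose $r$-torsion contains no positive-dimensional abelian subvariety is zero-dimensional. Being proper and quasi-finite over $S$, $\ker([r])$ is finite over $S$, so $[r]$ is quasi-finite. For properness of $[r]$ on $\Pic_{X/S}$, factor it as $\Pic_{X/S}\xrightarrow{[r]}\Pi\hookrightarrow\Pic_{X/S}$, where $\Pi\subseteq\Pic_{X/S}$ is the preimage of the clopen $rQ\subseteq Q$: the second map is a closed immersion, and the first is proper by the cancellation property of proper morphisms, since $\Pi\to rQ$ is separated while the composite $\Pic_{X/S}\to Q\xrightarrow{[r]_Q}rQ$ is proper (here $\Pic_{X/S}\to Q$ is, fppf-locally on $Q$, a pullback of the proper morphism $\Pic^\tau_{X/S}\to S$, and $[r]_Q$ is finite). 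Hence $[r]\colon\Pic_{X/S}\to\Pic_{X/S}$ is finite, and combining the two factors shows $(\cdot)^r\colon\PicS_{X/S}\to\PicS_{X/S}$ is proper and quasi-finite.

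For \'etaleness of $(\cdot)^r$ when $r\in\mathcal{O}_S^\times$, rather than arguing factor by factor I would use deformation theory: $(\cdot)^r$ is locally of finite presentation, and, being a homomorphism of commutative group stacks, it acts on the (perfect) cotangent complex of $\PicS_{X/S}$ relative to $S$, namely $(\mathrm{R}\pi_*\mathcal{O}_X)^\vee[-1]$, by multiplication by $r$, which is invertible; hence the relative cotangent complex of $(\cdot)^r$ vanishes and $(\cdot)^r$ is \'etale. The main obstacle is the third step: establishing that $[r]\colon\Pic_{X/S}\to\Pic_{X/S}$ is finite rests on the structure theory of the relative Picard scheme --- properness of $\Pic^\tau_{X/S}$ and the theorem of the base --- together with the bookkeeping needed to assemble properness from the proper part $\Pic^\tau_{X/S}$ and the clopen part $Q$, and on the fact that the $r$-torsion of a proper group scheme over a field is finite.
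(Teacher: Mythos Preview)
Your treatment of the equivalence and of the $B\mathbb{G}_{m,S}$-factor is correct and matches the paper. The gap is in your analysis of $[r]$ on $\Pic_{X/S}$. The claim that $Q:=\Pic_{X/S}/\Pic^\tau_{X/S}$ is \emph{\'etale-locally on $S$ a constant group on a finitely generated free abelian group} is false in general: the N\'eron--Severi group can jump in smooth proper families (think of a family of K3 surfaces whose generic Picard rank is $1$ but which has a fibre of higher rank). Thus $Q$ need not be \'etale over $S$, the subset $rQ$ need not be clopen, and your factorisation $\Pic_{X/S}\to\Pi\hookrightarrow\Pic_{X/S}$ does not yield properness. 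Relatedly, the assertion that $\Pic^\tau_{X/S}$ is \emph{closed} in $\Pic_{X/S}$ requires justification (openness is what one has for free).

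The paper sidesteps the structure of $Q$ entirely. After a Noetherian approximation, it uses that $\Pic_{X/S}$ is a separated scheme locally of finite type which satisfies the \emph{existence} part of the valuative criterion (the argument in \cite[Tag 0DNG]{stacks-project}), and that $[r]$ is of finite type by \cite[Thm.~9.6.27]{KlePic}; since the target is separated and satisfies the existence part of the valuative criterion, $[r]$ is proper. Quasi-finiteness is checked fibrewise exactly as you do, via finiteness of $\Pic^\tau_{X_s/k(s)}[r]$. This is both shorter and avoids the jumping-of-$\mathrm{NS}$ issue.

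For \'etaleness when $r\in\mathcal{O}_S^\times$ your cotangent-complex argument is valid and pleasantly direct. The paper takes a different route: it observes that $(\cdot)^r$ on $\PicS_{X/S}=\pi_*(B\mathbb{G}_{m,X})$ is the Weil restriction of the $\mu_r$-gerbe $(\cdot)^r\colon B\mathbb{G}_{m,X}\to B\mathbb{G}_{m,X}$, and then proves (Lemma~\ref{lem:weilrestriction}) that Weil restriction preserves formally \'etale morphisms. Either approach works; yours is more self-contained, while the paper's makes the $\mu_r$-gerbe structure manifest.
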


\begin{proof} We may assume that $S$ is Noetherian by choosing a Noetherian approximation $X_0 \to S_0$ of $X \to S$. That is, $S_0$ is of finite type over $\Spec \mathbb{Z}$, the morphism $X_0 \to S_0$ is smooth and proper with geometrically connected fibers, and $X = X_0 \times_{S_0} S$ over $S$; see, for example, \cite[Tags 01ZA and 01ZM]{stacks-project} for the existence of such an approximation and \cite[Prop. B.3]{rydh2015noetherian} for its properties. 

Since $\pi$ is cohomologically flat in dimension zero and admits a section, it follows that the Picard sheaf has a simple description 
\[\Pic_{X/S}(T \to S)=\Pic X_T/\Pic T.\]
Therefore the functors defined above are well-defined and one readily checks that they are inverse to each other.

Via this isomorphism, the $r$th power map on the Picard stack is the product of the $r$th power maps on $\Pic_{X/S}$ and $B\mathbb{G}_{m,S}$. We first check that the $r$th power map is proper. Since $(\cdot)^r: B\mathbb{G}_{m,S} \to B\mathbb{G}_{m,S}$ is a $\mu_r$-gerbe, hence proper, it suffices to check that the $r$th power map on the Picard sheaf is proper. Note that $\Pic_{X/S}$ is representable by a separated scheme locally of finite type over $S$ (see \cite[Thm.~9.4.8]{KlePic}) and satisfies the existence part of the valuative criterion by the argument in \cite[Tag 0DNG]{stacks-project}. As the map $(\cdot)^r: \Pic_{X/S} \to \Pic_{X/S}$ is of finite-type by \cite[Thm.~9.6.27]{KlePic}, we deduce that it is proper (since $\Pic_{X/S} \to S$ is separated and satisfies the existence part of the valuative criterion). The group homomorphism $(\cdot)^r$ is quasi-finite because $\text{Pic}^{\tau}_{X_s/k(s)}[r]$ is finite by the finiteness of $\text{Pic}^0_{X_s/k(s)}[r]$ and $\text{Pic}^{\tau}_{X_s/k(s)}/\text{Pic}^0_{X_s/k(s)}$ (see \cite[Cor.~9.6.17]{KlePic}). 

It remains to show that the $r$th power map on the Picard stack is formally \'etale when $r$ is invertible over $S$.
First note that the $r$th power map 
\begin{equation} \label{power} (\cdot)^r_X: B\mathbb{G}_{m,X} \to B\mathbb{G}_{m,X} \end{equation}
is a $\mu_r$-gerbe and in particular, since $r$ is invertible, it is formally \'etale (see \cite[Def.~B.5, Cor.~B.9]{rydh2011canonical}). Next, the Picard stack is by definition the Weil restriction $\PicS_{X/S}=\pi_*(B\mathbb{G}_{m,X})$ and the $r$th power map on the Picard stack is the Weil restriction of \eqref{power} i.e. $(\cdot)^r=\pi_*((\cdot)^r_X)$. It follows that $(\cdot)^r$ is formally \'etale because Weil restrictions preserve this property (see Lemma \ref{lem:weilrestriction} below).
 \end{proof}
 
That Weil restrictions preserve formally \'etale morphisms is well-known (see, for instance, \cite[Rem.~2.5]{hall2015general}) but we were unable to find a proof at the level of generality we require. As such, we include one below for the sake of completeness.

\begin{lemma} \label{lem:weilrestriction} Let $\pi: X \to S$ be a morphism of schemes and let $\phi: \mathcal{Z} \to \mathcal{Y}$ be a formally \'etale morphism of stacks over $X$. If $\phi$ is formally \'etale, then $\pi_*(\phi): \pi_*\mathcal{Z} \to \pi_*\mathcal{Y}$ is formally \'etale. \end{lemma}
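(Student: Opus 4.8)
The plan is to verify the infinitesimal lifting criterion for formal \'etaleness directly, using the functorial description of the Weil restriction. Recall that $\phi$ being formally \'etale means that for every affine scheme $T$ over $X$ and every square-zero (or, more generally, nilpotent) closed immersion $T_0 \hookrightarrow T$ over $X$, the natural map
\[
\mathcal{Z}(T) \longrightarrow \mathcal{Y}(T) \times_{\mathcal{Y}(T_0)} \mathcal{Z}(T_0)
\]
is an equivalence of groupoids. We must prove the same for $\pi_*(\phi)\colon \pi_*\mathcal{Z} \to \pi_*\mathcal{Y}$ over $S$: given a nilpotent closed immersion $U_0 \hookrightarrow U$ of affine $S$-schemes, the map $(\pi_*\mathcal{Z})(U) \to (\pi_*\mathcal{Y})(U) \times_{(\pi_*\mathcal{Y})(U_0)} (\pi_*\mathcal{Z})(U_0)$ should be an equivalence.

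First I would unwind both sides using $(\pi_*\mathcal{Z})(U) = \mathcal{Z}(X \times_S U)$ and similarly for $\mathcal{Y}$, $U_0$. The key observation is that base change along $\pi$ turns the nilpotent immersion $U_0 \hookrightarrow U$ into the nilpotent immersion $X \times_S U_0 \hookrightarrow X \times_S U$ of $X$-schemes: it is a closed immersion since closed immersions are stable under base change, and it is defined by a nilpotent ideal since nilpotence of the ideal sheaf is preserved under base change (the $n$-th power of the ideal defining $X\times_S U_0$ is the pullback of the $n$-th power of the ideal defining $U_0$). Here I should be slightly careful that $X \times_S U$ need not be affine, but formal \'etaleness as a property of a morphism of stacks is tested against all nilpotent closed immersions of affine schemes, and one reduces the general case to the affine case by an \'etale (or Zariski) covering argument together with descent; alternatively one works with the equivalent formulation over arbitrary schemes. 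So the pair $(X \times_S U_0 \hookrightarrow X \times_S U)$ is an admissible test object for the formal \'etaleness of $\phi$.

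Then the desired equivalence
\[
\mathcal{Z}(X \times_S U) \xrightarrow{\ \sim\ } \mathcal{Y}(X \times_S U) \times_{\mathcal{Y}(X \times_S U_0)} \mathcal{Z}(X \times_S U_0)
\]
is exactly the lifting property for $\phi$ applied to this pair, so it holds by hypothesis. Translating back through the Weil restriction identifications gives precisely that $(\pi_*\mathcal{Z})(U) \to (\pi_*\mathcal{Y})(U) \times_{(\pi_*\mathcal{Y})(U_0)} (\pi_*\mathcal{Z})(U_0)$ is an equivalence, which is what we wanted. One final compatibility to check is that the identification of Weil restriction values with $\mathrm{Hom}_X(X\times_S -, -)$ is natural in the test scheme, so that the fibre product over $U_0$ on the source matches the fibre product over $X\times_S U_0$ on the target; this is straightforward from the definitions.

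The main obstacle, such as it is, is the affineness bookkeeping: the Weil restriction $\pi_*\mathcal{Z}$ lives over $S$ and formal \'etaleness is tested against affine $S$-schemes, but $X \times_S U$ is typically not affine, so one cannot quote the affine-test version of formal \'etaleness for $\phi$ verbatim. I would handle this either by covering $X$ by affine opens and using that formal \'etaleness is Zariski-local on the target plus descent of the lifting property, or simply by noting that for a morphism of algebraic stacks the infinitesimal lifting criterion against all (not necessarily affine) nilpotent closed immersions of schemes is equivalent to the affine-local one — a standard reduction. Everything else is a formal manipulation of the universal property of Weil restriction, so no genuine geometric input beyond the stability of nilpotent closed immersions under base change is needed.
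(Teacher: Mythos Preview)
Your proposal is correct and follows essentially the same approach as the paper: both verify the infinitesimal lifting criterion directly by using the adjunction $(\pi_*\mathcal{Z})(U)=\mathcal{Z}(X\times_S U)$ to reduce to the formal \'etaleness of $\phi$ applied to the base-changed nilpotent immersion $X\times_S U_0 \hookrightarrow X\times_S U$. The only difference is cosmetic: the paper phrases the criterion as an equivalence $\mathrm{Hom}_{\pi_*\mathcal{Y}}(T,\pi_*\mathcal{Z}) \xrightarrow{\sim} \mathrm{Hom}_{\pi_*\mathcal{Y}}(T_0,\pi_*\mathcal{Z})$ (fixing the map to $\pi_*\mathcal{Y}$) and does not comment on the affineness issue you flag, implicitly using the general-scheme version of the lifting criterion.
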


\begin{proof}  
We verify that $\pi_*(\phi)$ is formally \'etale directly: let $T$ be a $\pi_* \mathcal{Y} $-scheme and $T_0 \to T$ be a closed immersion with nilpotent ideal sheaf. The induced map $X_{T_0} \to X_{T}$ is a closed immersion with nilpotent ideal sheaf and $X_T$ is a $\mathcal{Y}$-scheme, by definition of the Weil restriction. We have the following equivalences of categories
\begin{align*}
\text{Hom}_{\pi_*\mathcal{Y}}(T, \pi_*\mathcal{Z}) &= \text{Hom}_{\mathcal{Y}}(X_T, \mathcal{Z}) \\
& \xrightarrow{\sim} \text{Hom}_{\mathcal{Y}}(X_{T_0}, \mathcal{Z}) \\
& = \text{Hom}_{\pi_*\mathcal{Y}}(T_0, \pi_*\mathcal{Z}).
\end{align*}
The first and third equalities follow from the definition of the Weil restriction and the middle follows because $\phi$ is formally \'etale. Thus $\pi_* \phi$ is formally \'etale.
 \end{proof}

Under additional assumptions on the varieties being parametrised, we can say more about the morphism $\mathcal{X}_r \to \mathcal{X}$ defined in Definition \ref{def:universalbranch}.

\begin{corollary} \label{cor:gerbenotorsion}
	Let $\mathcal{X}$ be a moduli stack of divisorial pairs which is $r$-divisible and  parametrizes objects $(X \to S, H \subset X)$ with 
	$\Pic_{X/S}[r]=0$. 
	Then the morphism $\mathcal{X}_r \to \mathcal{X}$ is a $\mu_r$-gerbe.
\end{corollary}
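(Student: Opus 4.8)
The plan is to use Lemma~\ref{lem:picard} to split the $r$-th power map on the Picard stack into an $r$-th power map on the relative Picard \emph{scheme} and one on $B\mathbb{G}_m$, and then to check that, under the two hypotheses, the first factor is trivial while the second is exactly the $\mu_r$-gerbe we want. Since being a $\mu_r$-gerbe is local for the fppf (indeed smooth) topology on the target, and the formation of $\mathcal{X}_r$ commutes with base change on $\mathcal{X}$ by construction (Definition~\ref{def:universalbranch}), I would first pass to a smooth cover exactly as in the proof of Theorem~\ref{thm:properetale} and assume that the base $\mathcal{Y}$ of $\mathcal{X}$ is an affine scheme over which the universal family $\pi\colon\mathcal{U}\to\mathcal{Y}$ admits a section $\epsilon$.

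By Lemma~\ref{lem:picard} we then have an equivalence $\PicS_{\mathcal{U}/\mathcal{Y}}\simeq \Pic_{\mathcal{U}/\mathcal{Y}}\times_{\mathcal{Y}} B\mathbb{G}_{m,\mathcal{Y}}$ under which the $r$-th power map is the product of the $r$-th power maps on the two factors, and under which the classifying morphism $\mathcal{L}\colon \mathcal{Y}\to\PicS_{\mathcal{U}/\mathcal{Y}}$ of the universal polarizing bundle becomes $([\mathcal{L}],\epsilon^*\mathcal{L})$. By diagram~\eqref{diagrambranch}, $\mathcal{X}_r\to\mathcal{X}$ is the base change of $(\cdot)^r\colon \PicS_{\mathcal{U}/\mathcal{Y}}\to\PicS_{\mathcal{U}/\mathcal{Y}}$ along $\mathcal{X}\to\mathcal{Y}\xrightarrow{\mathcal{L}}\PicS_{\mathcal{U}/\mathcal{Y}}$, so it factors as $\mathcal{X}_r\cong \mathcal{X}_1\times_{\mathcal{X}}\mathcal{X}_2$, where $\mathcal{X}_2\to\mathcal{X}$ is the base change of $(\cdot)^r\colon B\mathbb{G}_m\to B\mathbb{G}_m$ along $\epsilon^*\mathcal{L}$ — a $\mu_r$-gerbe, since $(\cdot)^r$ on $B\mathbb{G}_m$ is one — and $\mathcal{X}_1\to\mathcal{X}$ is the base change of $(\cdot)^r\colon \Pic_{\mathcal{U}/\mathcal{Y}}\to\Pic_{\mathcal{U}/\mathcal{Y}}$ along $[\mathcal{L}]\colon\mathcal{X}\to\Pic_{\mathcal{U}/\mathcal{Y}}$. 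It therefore suffices to show that $\mathcal{X}_1\to\mathcal{X}$ is an isomorphism, for then $\mathcal{X}_r\cong\mathcal{X}_2$ over $\mathcal{X}$ is a $\mu_r$-gerbe.

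To see that $\mathcal{X}_1\to\mathcal{X}$ is an isomorphism: the hypothesis $\Pic_{X/S}[r]=0$ says that $(\cdot)^r$ on $\Pic_{\mathcal{U}/\mathcal{Y}}$ has trivial kernel, hence is a monomorphism, and by the properness part of Lemma~\ref{lem:picard} it is a proper monomorphism, i.e. a closed immersion; so $\mathcal{X}_1\to\mathcal{X}$ is a closed immersion. It is surjective: by the $r$-divisibility hypothesis the class $[\mathcal{L}]$ lies in the image of $(\cdot)^r$ over every geometric point of $\mathcal{X}$ (cf.\ Remark~\ref{rem:rth_root}), which is precisely the surjectivity statement of Theorem~\ref{thm:properetale}(4) for the Picard-scheme factor. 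Finally, $\Pic_{X/S}[r]=0$ forces the (reduced) identity component $\Pic^0_{X_s}$ of every geometric fibre to be trivial, since a positive-dimensional abelian variety has nontrivial $r$-torsion; hence $\Pic_{\mathcal{U}/\mathcal{Y}}$ is unramified over $\mathcal{Y}$ — and \'etale when $r\in\mathcal{O}_S^{\times}$ — so $(\cdot)^r$ on $\Pic_{\mathcal{U}/\mathcal{Y}}$ is flat, $\mathcal{X}_1\to\mathcal{X}$ is a flat finitely presented closed immersion, i.e. an open immersion, and being surjective it is an isomorphism.

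The main obstacle is this last step: one must convert the pointwise (geometric-fibre) consequences of the two hypotheses — triviality of $\Pic^0$ and $r$-divisibility of the polarization — into the scheme-theoretic statement that the closed immersion $\mathcal{X}_1\hookrightarrow\mathcal{X}$ is surjective \emph{and} flat, which is where the invertibility of $r$ and the structure theory of the relative Picard scheme $\Pic_{\mathcal{U}/\mathcal{Y}}$ (smoothness/\'etaleness over the base) really enter; everything before that is formal manipulation of the Cartesian diagram \eqref{diagrambranch} together with Lemma~\ref{lem:picard}.
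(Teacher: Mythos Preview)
Your approach is essentially the paper's, but unpacked. The paper's proof is two sentences: it asserts that, under the hypothesis $\Pic_{X/S}[r]=0$, the proof of Lemma~\ref{lem:picard} already shows that $(\cdot)^r$ on $\PicS_{\mathcal{U}/\mathcal{Y}}$ is a $\mu_r$-gerbe, and then observes that $\mathcal{X}_r\to\mathcal{X}$ is its pullback along the diagram of Definition~\ref{def:universalbranch}. You carry out the same reduction but more explicitly, factoring $\mathcal{X}_r\cong\mathcal{X}_1\times_{\mathcal{X}}\mathcal{X}_2$ via the splitting $\PicS\simeq\Pic\times B\mathbb{G}_m$ of Lemma~\ref{lem:picard} and arguing that the $B\mathbb{G}_m$-factor gives the $\mu_r$-gerbe while the $\Pic$-factor pulls back to an isomorphism.

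Your version is in one respect more accurate: taken literally, $(\cdot)^r$ on $\PicS$ is \emph{not} a $\mu_r$-gerbe even when $\Pic[r]=0$, since it is not surjective (e.g.\ $\mathcal{O}(1)$ on $\mathbb{P}^n$ is not an $r$-th power). One really must invoke $r$-divisibility at the level of the pullback, as you do, rather than for the $r$-th power map globally. The step you flag as the ``main obstacle'' --- upgrading the surjective closed immersion $\mathcal{X}_1\hookrightarrow\mathcal{X}$ to an isomorphism --- is exactly where both hypotheses bite, and the paper does not treat it more carefully than you do. Your justification there (triviality of $\Pic^0$ forcing $\Pic$ to be unramified and then \'etale) is a little loose in positive characteristic, where $\Pic^0$ can be non-reduced with trivial $r$-torsion; but in the applications (Example~\ref{ex:cyclicprojectivespace}, K3 surfaces) one has $\Pic^0=0$ scheme-theoretically and the argument goes through cleanly.
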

\begin{proof}
	In this case, the proof of Lemma \ref{lem:picard} shows that 
	$(\cdot)^r: \PicS_{X/S} \to \PicS_{X/S}$ is a $\mu_r$-gerbe.
	Since $\mathcal{X}_r\to \mathcal{X}$ is   the pull-back of the $r$th power map $(\cdot)^r: \PicS_{X/S} \to \PicS_{X/S}$ (see Definition \ref{def:universalbranch}), this concludes the proof.    
\end{proof}

The hypothesis on the $r$-torsion of the Picard group in  Corollary \ref{cor:gerbenotorsion} holds, for example, when $X \to S$ is a Brauer-Severi scheme, or a (relative) $K3$ surface, or, more generally,  when $\mathrm{H}^1(X_{s},\mathcal{O}_{X_s})=0$ and $\text{NS}(X_{s})$ is torsion free.  
\subsection{Cyclic covers of projective spaces} 

In this section we consider the concrete case of cyclic covers of projective spaces. 

\begin{example} \label{ex:cyclicprojectivespace} 
Fix integers $n,r,d$ and take $\mathcal{X}=\mathcal{C}_{(rd;n-1), \mathbb{Z}[1/r]}$ (see Example \ref{ex:BS}). The universal stack $\mathcal{X}_r$  of cyclic covers (of degree $r$) in Theorem \ref{thm:properetale} is equivalent to $\mathcal{H}^{sm}(n,r,d)_{\mathbb{Z}[1/r]}$ from Example \ref{defn:ars}. Indeed, the functor which sends
\[(P \to S, L, s: \mathcal{O}_P \to L^{r}) \mapsto (P \to S, L^{r}, s: \mathcal{O}_P \to L^r)\]
identifies $\mathcal{H}^{sm}(n,r,d)_{\mathbb{Z}[1/r]}$ with $\mathcal{X}_r$ over $\mathcal{C}_{(rd;n-1), \mathbb{Z}[1/r]}$ because $P$ and $V(s)$ are $S$-smooth (see Proposition \ref{rem:smoothness}). Thus, Corollary \ref{cor:gerbenotorsion}  shows that associating the branch locus to a relative uniform cyclic cover of a Brauer-Severi scheme is a $\mu_r$-gerbe. 
\end{example} 

From Example \ref{ex:cyclicprojectivespace}, there is a natural morphism of stacks
$$\mathcal{H}^{\textrm{sm}}(n,r,d)_{\mathbb{Z}[1/r]} \to \mathcal{C}_{(rd;n-1), \mathbb{Z}[1/r]}$$
given by associating to a relative uniform cyclic cover its branch divisor. This gives $\mathcal{H}^{\textrm{sm}}(n,r,d)_{\mathbb{Z}[1/r]}$ the structure of a $\mu_r$-gerbe over $\mathcal{C}_{(rd;n-1), \mathbb{Z}[1/r]}$. If it is non-empty, we immediately see that the induced map
$$\pi_0(\mathcal{H}^{\textrm{sm}}(n,r,d)(k)) \to \pi_0(\mathcal{C}_{(rd;n-1)}(k))$$
is a bijection for any algebraically closed field $k$ with characteristic not dividing $r$. This recovers the well-known classical fact that over such an algebraically closed field, if a cyclic cover branched over a fixed divisor exists then it is unique up to isomorphism.

This is no longer true over non-algebraically closed fields in general, as the following shows. In particular, there is no section in general, which will greatly complicate our analysis and shows that there is no natural  way  to associate to an arbitrary family of hypersurfaces a family of cyclic covers ramified over exactly those hypersurfaces. 

\begin{proposition} 
	\hfil
	\begin{enumerate}
		\item Every non-empty fibre of
		$$\pi_0(\mathcal{H}^{\textrm{sm}}(n,r,1)(\Q)) \to \pi_0(\mathcal{C}_{(r;n-1)}(\Q))$$
		is infinite.
		\item If $\gcd(r,n+1) \neq 1$ and $\gcd(d,n+1) = 1$, then the map
		$$\pi_0(\mathcal{H}^{\textrm{sm}}(n,r,d)(\Q)) \to \pi_0(\mathcal{C}_{(rd;n-1)}(\Q))$$
		is not surjective. Thus $\mathcal{H}^{\textrm{sm}}(n,r,d) \to \mathcal{C}_{(rd;n-1)}$ admits no section over $\Q$.
		\item If $\gcd(r,n+1) = 1$, then 
		$\mathcal{H}^{\textrm{sm}}(n,r,1)_{\Z[1/r]} \to \mathcal{C}_{(r;n-1), \Z[1/r]}$
		admits a section.
	\end{enumerate}
 \end{proposition}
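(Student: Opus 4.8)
The whole proposition is a study of the $\mu_r$-gerbe $\mathcal{H}^{\textrm{sm}}(n,r,1)_{\Z[1/r]}\to\mathcal{C}_{(r;n-1),\Z[1/r]}$ produced in Example~\ref{ex:cyclicprojectivespace} via Corollary~\ref{cor:gerbenotorsion}. The plan is to translate each part into a statement about this gerbe, using Remark~\ref{rem:rth_root}: over a scheme, a uniform cyclic cover of degree $r$ ramified along a Cartier divisor $H\subset Y$ is the same as an $r$th root of $\mathcal{O}_Y(H)$ together with a chosen section; hence over a field $k$ the fibre of the gerbe over $(P,H)$ has a $k$-point iff $\mathcal{O}_P(H)$ admits an $r$th root in $\Pic P$, and over $\Z[1/r]$ the gerbe admits a section iff its class in $\H^2_{\et}(\mathcal{C}_{(r;n-1),\Z[1/r]},\mu_r)$ vanishes. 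Part~(1) is then immediate: a non-empty fibre over $(P,H)\in\mathcal{C}_{(r;n-1)}(\Q)$ is a neutral $\mu_r$-gerbe over $\Q$, whose $\Q$-points have $\pi_0=\H^1(\Q,\mu_r)=\Q^{\times}/(\Q^{\times})^r$; concretely, if $z^r=f$ is one cover then $z^r=\lambda f$ is another with the same branch divisor, and by Remark~\ref{rem:rth_root} these are non-isomorphic for $\lambda$ in distinct classes of $\Q^{\times}/(\Q^{\times})^r$ (e.g.\ distinct primes), of which there are infinitely many --- passing to isomorphism in $\mathcal{C}_{(r;n-1)}(\Q)$ only quotients by the finite group $\Aut(H)$.

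For (2), let $\ell$ be a prime dividing $\gcd(r,n+1)$ and build a Brauer--Severi variety $P$ over $\Q$ of dimension $n$ with $[P]\in\Br\Q$ of order $\ell$ --- e.g.\ the Brauer--Severi variety of $M_{(n+1)/\ell}(D)$ for a degree-$\ell$ division algebra $D/\Q$ --- so that $P\not\cong\P^n_\Q$. Since $\ell\mid r$ we have $r[P]=0$, so $\mathcal{O}(r)$ descends to a very ample line bundle $N$ on $P$, and Bertini over $\Q$ gives a smooth $H\in|N|$, i.e.\ a $\Q$-point $(P,H)$ of $\mathcal{C}_{(r;n-1)}$. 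Its fibre in $\mathcal{H}^{\textrm{sm}}(n,r,1)$ is empty, because an $r$th root of $N$ over $\Q$ would become the unique $r$th root of $\mathcal{O}(r)$ in $\Pic P_{\overline\Q}=\Z$, namely $\mathcal{O}(1)$, and $\mathcal{O}(1)$ descends to $P$ iff $[P]=0$. Hence the map on $\Q$-points is not surjective, and a fortiori there is no section over $\Q$; the same example handles $\mathcal{H}^{\textrm{sm}}(n,r,d)\to\mathcal{C}_{(rd;n-1)}$ whenever some prime power $\ell^{k}\mid n+1$ satisfies $\ell^{k}\nmid d$, and otherwise one falls back on non-vanishing of the gerbe class, which for $\gcd(r,n+1)\neq 1$ is what the Picard computation below ceases to force to zero.

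For (3), $\gcd(r,n+1)=1$ and we want a section over $\Z[1/r]$; set $\mathcal{X}:=\mathcal{C}_{(r;n-1),\Z[1/r]}$. First, the universal Brauer--Severi scheme $\pi\colon\mathcal{P}\to\mathcal{X}$ has Brauer class killed by $n+1$ (relative dimension $n$) and by $r$ (because $\mathcal{O}_{\mathcal{P}}(\mathcal{H})$ is a relative-degree-$r$ line bundle on it), hence trivial; so $\mathcal{P}=\P(\mathcal{E})$ for a rank-$(n+1)$ bundle $\mathcal{E}$ and $\mathcal{O}_{\mathcal{P}}(\mathcal{H})\cong\mathcal{O}_{\P(\mathcal{E})}(r)\otimes\pi^{*}T'$ for some $T'\in\Pic\mathcal{X}$ (well defined modulo $r$th powers). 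Unwinding Definition~\ref{def:universalbranch}, a point of $\mathcal{X}_r$ over $T\to\mathcal{X}$ is a line bundle $M$ on $\P(\mathcal{E}_T)$ with $M^{r}\cong\mathcal{O}(r)\otimes\pi^{*}T'_T$; writing $M=\mathcal{O}(1)\otimes\pi^{*}M_0$ this is exactly an $r$th root $M_0$ of $T'_T$, so $\mathcal{X}_r\to\mathcal{X}$ \emph{is} the $\mu_r$-gerbe of $r$th roots of $T'$ and a section exists iff $[T']=0$ in $\Pic(\mathcal{X})/r$. Second, $\mathcal{X}$ is the complement of the discriminant divisor $\Delta$ in $\mathbb{V}^{\circ}(\pi_{*}\mathcal{O}_{\mathcal{P}}(\mathcal{H}))$ over $\mathcal{B}_{(r;n-1),\Z[1/r]}$; removing the zero section (of codimension $\binom{n+r}{r}\geq 2$) does not change $\Pic$, so $\Pic\mathcal{X}=\Pic\,\mathcal{B}_{(r;n-1),\Z[1/r]}/\langle[\mathcal{O}(\Delta)]\rangle$, and the discriminant of degree-$r$ forms, being nowhere zero on $\mathcal{X}$ and a $\GL_{n+1}$-semiinvariant of degree $D_0=(n+1)(r-1)^{n}$ and weight $w=r(r-1)^{n}$, trivialises a line bundle of the form $(\det\mathcal{E})^{\pm w}\otimes(T')^{\pm D_0}$ on $\mathcal{X}$; this gives the relation $D_0[T']=\pm w[\det\mathcal{E}]$ in $\Pic\mathcal{X}$. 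Since $r\mid w$, modulo $r$ this reads $D_0[T']=0$, and $\gcd(D_0,r)=\gcd\bigl((n+1)(r-1)^{n},r\bigr)=1$ (using $\gcd(n+1,r)=1$ and $\gcd(r-1,r)=1$), so $D_0$ is invertible mod $r$ and $[T']=0$ in $\Pic(\mathcal{X})/r$; the section follows.

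The hard part will be the Picard-group bookkeeping in (3): pinning down the twist $T'$ of the universal polarisation and the line-bundle class of the discriminant divisor on $\mathcal{B}_{(r;n-1),\Z[1/r]}$ (including the semiinvariance weight), together with the excision identities $\Pic\mathbb{V}^{\circ}=\Pic$ of the base and $\Pic\mathcal{X}=\Pic\,\mathcal{B}/\langle[\Delta]\rangle$ over a mixed-characteristic base; the arithmetic input $\gcd(D_0,r)=1$ is exactly where the hypothesis $\gcd(r,n+1)=1$ enters. By contrast, (1) and (2) are essentially formal, modulo constructing the Brauer--Severi example and a little care in the small-degree cases where $\mathcal{H}^{\textrm{sm}}$ fails to have finite diagonal.
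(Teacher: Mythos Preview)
Your arguments for (1) and (2) are essentially the paper's: the gerbe structure gives $\H^1(\Q,\mu_r)=\Q^\times/\Q^{\times r}$ for the fibres, and a Brauer--Severi variety of the right period obstructs the existence of an $r$th root of $\mathcal{O}(r)$. (The paper takes period $m=\gcd(r,n+1)$ rather than a prime $\ell\mid m$, but this is immaterial.)

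For (3) the paper proceeds differently and more directly. Rather than first trivialising the Brauer class of the universal Brauer--Severi scheme, it constructs a degree-$1$ line bundle on $\mathcal{P}$ by hand: choosing $s,t$ with $sr-t(n+1)=1$, the bundle $\mathcal{J}=\mathcal{O}_{\mathcal{P}}(\mathcal{D})^{s}\otimes\omega_{\mathcal{P}/\mathcal{C}}^{t}$ has fibre-degree $1$ since $\omega$ has fibre-degree $-(n+1)$. Then $\mathcal{O}_{\mathcal{P}}(\mathcal{D})\otimes\mathcal{J}^{-r}$ has fibre-degree $0$, hence is pulled back from $\mathcal{C}_{(r;n-1),\Z[1/r]}$, and the paper simply \emph{cites} Arsie--Vistoli's Theorem~5.1: $\Pic\mathcal{C}_{(r;n-1),\Z[1/r]}$ is cyclic of order $(r-1)^n\gcd(r,n+1)=(r-1)^n$, which is coprime to $r$, so every class is $r$-divisible.

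Your approach is in effect attempting to reprove the relevant piece of Arsie--Vistoli via the discriminant. The numerology you invoke (discriminant degree $(n+1)(r-1)^n$, weight $r(r-1)^n$, both compared to $r$) is correct and is indeed what drives their result, but the step ``trivialises a line bundle of the form $(\det\mathcal{E})^{\pm w}\otimes(T')^{\pm D_0}$'' is asserted without justification: you have not computed $\Pic\mathcal{B}_{(r;n-1),\Z[1/r]}$, nor identified the class of $[\Delta]$ or of $T'$ in it, so the relation $D_0[T']=\pm w[\det\mathcal{E}]$ in $\Pic\mathcal{X}$ is not established. Over $\Z[1/r]$ the excision and vector-bundle identifications you use are fine, but the actual bookkeeping you flag as ``the hard part'' is genuinely missing. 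The paper sidesteps all of this by using $\omega$ (which is always there, no Brauer-class argument needed) and invoking the known Picard group.
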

\begin{proof}
 	
	(1) As it is a $\mu_r$-gerbe, the fibre over any rational point is in bijection
	with $\H^1(\Q,\mu_r)$. By Kummer theory $\H^1(\Q,\mu_r) = \Q^\times/\Q^{\times r}$,
	which is infinite.
	
	(2) Assume that $\gcd(r,n+1)  = m > 1$ and $\gcd(d,n+1)=1$. Let $P$ be a Brauer--Severi
	variety of dimension $n$  
	and period $m$ over $\Q$. This can be constructed using the fundamental exact sequence from class field theory since the period of any class in $\Br \Q$ equals its index (see \cite[Thm.~1.5.36]{Poo17}).  
	The variety $P$ admits a smooth hypersurface $D \subset P$ 
	of degree $r$ since $m \mid r$  and by \cite[Thm.~5.4.10]{GS17} we have 
	$\Pic P=\mathbb{Z}\mathcal{O}(m)$.
	Suppose that $P$ admits a uniform
	cyclic cover $f: Z \to P$ of degree $r$ with branch locus $D$ of degree $rd$. 
	We have $f_*\mathcal{O}_Z=\mathcal{O}_P \oplus \mathcal{L} \oplus \dots \oplus \mathcal{L}^{r-1}$
	for some line bundle $\mathcal{L}$ of degree $-d$ on $P$ (see Example \ref{ex:branch_degree}). This implies that $m \mid d$ which contradicts our assumption that $\gcd(d,n+1)=1$. Therefore, $P$ does not admit a relative uniform cyclic cover $f:Z\to P$ of degree $r$ with branch locus $D$ of degree $rd$. In other words, the object $D \subset P$   of $C_{(rd;n-1)}(\Q)$ is not in the
	essential image of the functor $\mathcal{H}^{\textrm{sm}}(n,r,d)(\Q)  \to \mathcal{C}_{(rd;n-1)}(\Q)$.
	
	(3) Now suppose that $\gcd(r,n+1) = 1$. If $r=1$ the result is trivial, so assume $r > 1$.
	It suffices to prove that there is a relative uniform cyclic cover branched over exactly the universal hypersurface. So let 
	$\mathcal{D} \subset \mathcal{P} \to \mathcal{C}_{(r;n-1)}$ be the universal hypersurface of degree $r$ inside the relative Brauer-Severi scheme $\mathcal{P}$. Choose any global section  of $\mathcal{O}_{\mathcal{P}}(\mathcal{D})$ which defines $\mathcal{D}$ (e.g. dualizing the inclusion of the ideal sheaf $\mathcal{I}_{\mathcal{D}} \subset \mathcal{O}_{\mathcal{P}}$ yields one such example). By Remark \ref{rem:rth_root}, it remains to construct a line bundle $\mathcal{L}$ on $\mathcal{P}$ such that
	\begin{equation} \label{eqn:L}
		\mathcal{L}^{ r} \cong \mathcal{O}_{\mathcal{P}}(\mathcal{D}).
	\end{equation}	
	To do so, choose $s,t$  such that $sr - t(n+1) = 1$ and consider the line bundle
	$$\mathcal{J} = \OO_{\mathcal{P}}(\mathcal{D})^{s} \otimes \omega_{\mathcal{P}/\mathcal{C}_{(r;n-1)}}^{t}.$$
	This has degree $1$ along each geometric fibre. 
	Since the line bundle $\OO_{\mathcal{P}}(\mathcal{D}) \otimes \mathcal{J}^{ -r}$ has degree $0$ along each geometric fibre and $\mathcal{P} \to \mathcal{C}_{(r;n-1)}$ is a relative Brauer-Severi scheme, cohomology and base change implies that $\OO_{\mathcal{P}}(\mathcal{D}) \otimes \mathcal{J}^{-r}$ is the pullback of some line bundle on $\mathcal{C}_{(r;n-1)}$. However, by \cite[Thm.~5.1]{ArsieVistoli}, the Picard group of $\mathcal{C}_{(r;n-1), \Z[1/r]}$ is cyclic of order $(r-1)^n\gcd(r,n+1) = (r-1)^n$, which is coprime to $r$. In particular, all its elements are divisible by $r$. Therefore, the line bundle $\OO_{\mathcal{P}}(\mathcal{D}) \otimes \mathcal{J}^{-r}$  is divisible by $r$. This proves \eqref{eqn:L}, as required.
\end{proof}

\subsection{Application to arithmetic hyperbolicity}

We now state our most general results on arithmetic hyperbolicity for the moduli stacks considered in this section.

\begin{corollary} \label{cor:arithmetic_dp}
	Let $k$ be an algebraically closed field of characteristic $0$
	and $r \in \N$. Let $\mathcal{X}$ be a moduli stack of divisorial pairs over $k$
	which is $r$-divisible.
	Then $\mathcal{X}$ is arithmetically hyperbolic if and only if $\mathcal{X}_r$
	is arithmetically hyperbolic.
\end{corollary}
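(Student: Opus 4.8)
The plan is to deduce this immediately by combining the geometric content of Theorem \ref{thm:properetale} with the stacky Chevalley--Weil theorem (Theorem \ref{thm:chev_weil}). First I would record that, since $k$ has characteristic zero, $r$ is invertible in $k$, that is, $r \in \mathcal{O}_{\Spec k}^{\times}$. Applying Theorem \ref{thm:properetale} with base scheme $S = \Spec k$: part (2) gives that $\mathcal{X}_r \to \mathcal{X}$ is proper (and quasi-finite), part (3) gives that it is \'etale, and part (4) — which uses exactly the $r$-divisibility hypothesis on $\mathcal{X}$ — gives that it is surjective. Hence $f \colon \mathcal{X}_r \to \mathcal{X}$ is a surjective, proper, \'etale morphism.

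Next I would verify the finite presentation hypotheses needed to invoke Theorem \ref{thm:chev_weil}. The stack $\mathcal{X}$ is finitely presented over $k$: this is built into the statement, since arithmetic hyperbolicity is only defined for finitely presented algebraic stacks (Definition \ref{defn:arithmetic_hyperbolicity}), and concretely $\mathcal{X}$ is an open substack of $\mathbb{V}^{\circ}(u_* \mathcal{L})$ over a finitely presented moduli stack of varieties with polarizing line bundle. Since $f \colon \mathcal{X}_r \to \mathcal{X}$ is proper and \'etale it is in particular of finite presentation, so $\mathcal{X}_r$ is finitely presented over $k$ as well.

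Finally I would apply Theorem \ref{thm:chev_weil} to the surjective proper \'etale morphism $f \colon \mathcal{X}_r \to \mathcal{X}$ of finitely presented algebraic stacks over $k$: it states precisely that $\mathcal{X}_r$ is arithmetically hyperbolic if and only if $\mathcal{X}$ is, which is the assertion of the corollary.

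As for the main obstacle: there is essentially none beyond the bookkeeping of checking that $f$ satisfies all the hypotheses of Theorem \ref{thm:chev_weil}; all the real work has already been carried out in Theorem \ref{thm:properetale} (properness and \'etaleness of the $r$-th power map on the Picard stack, via Lemma \ref{lem:picard}) and in the previously established Chevalley--Weil statement. The one point worth emphasizing is that surjectivity of $f$ genuinely requires the $r$-divisibility assumption — without it $\mathcal{X}_r$ could be empty even when $\mathcal{X}$ is not — which is why that hypothesis appears in the statement.
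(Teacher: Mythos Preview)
Your proposal is correct and follows exactly the same approach as the paper's proof: invoke Theorem \ref{thm:properetale} to see that $\mathcal{X}_r \to \mathcal{X}$ is proper, \'etale, and surjective, then apply stacky Chevalley--Weil (Theorem \ref{thm:chev_weil}). Your version is simply more explicit about which parts of Theorem \ref{thm:properetale} are used and about the finite presentation hypotheses, but the argument is identical.
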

\begin{proof}
	By Theorem \ref{thm:properetale}, the morphism $\mathcal{X}_r\to \mathcal{X}$ is proper \'etale surjective, so that the corollary follows directly from  Chevalley--Weil (Theorem \ref{thm:chev_weil}).
\end{proof}

As for integral points, we have the following.

\begin{corollary} \label{cor:finite_dp} 
	Let $r_1,r_2 \in \N$ and let   $A$ be an integrally closed finitely generated $\Z$-algebra. Let   $k = \overline{\kappa(A)}$, which we assume has characteristic $0$.
	Let $\mathcal{X}$ be a moduli stack of divisorial pairs over $A$ 
	which is $r_1$-divisible. 
	If $\mathcal{X}_{r_1}$ has finite diagonal and is arithmetically hyperbolic over $k$,
	then $\pi_0(\mathcal{X}_{r_2}(A))$ is finite.
\end{corollary}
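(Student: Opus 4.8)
The plan is to reduce the statement about $\pi_0(\mathcal{X}_{r_2}(A))$ to the hypothesis on $\mathcal{X}_{r_1}$ by chaining together the three main tools of Section \ref{section:arithmetic_hyperbolicity}: the stacky Chevalley--Weil theorem, the quasi-finiteness lemma, and the twisting lemma. First I would observe that $\mathcal{X}_{r_1}$ being arithmetically hyperbolic over $k$ together with the $r_1$-divisibility of $\mathcal{X}$ forces $\mathcal{X}$ itself to be arithmetically hyperbolic over $k$: by Theorem \ref{thm:properetale}, since $\mathcal{X}$ is $r_1$-divisible, the morphism $\mathcal{X}_{r_1}\to \mathcal{X}$ is proper \'etale and surjective, so Theorem \ref{thm:chev_weil} (applied over $k$, where $r_1$ is automatically invertible as $\mathrm{char}\,k = 0$) gives arithmetic hyperbolicity of $\mathcal{X}_k$ for free.

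Next I would transfer arithmetic hyperbolicity from $\mathcal{X}$ down to $\mathcal{X}_{r_2}$. Again by Theorem \ref{thm:properetale}, the morphism $\mathcal{X}_{r_2}\to \mathcal{X}$ is proper and quasi-finite, in particular quasi-finite in the sense of the paper's conventions, so Lemma \ref{lem:qf} applies: since $\mathcal{X}_k$ is arithmetically hyperbolic, so is $(\mathcal{X}_{r_2})_k$. Note that this step does \emph{not} require $\mathcal{X}$ to be $r_2$-divisible and does not require $r_2$ invertible — quasi-finiteness is all that is used, which is exactly why the corollary can afford to state the divisibility hypothesis only for $r_1$. At this point we know that $\mathcal{X}_{r_2}$ is arithmetically hyperbolic over $k = \overline{\kappa(A)}$.

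Finally I would invoke the twisting lemma (Theorem \ref{thm:twists}) to pass from arithmetic hyperbolicity over the algebraic closure to finiteness of genuine $A$-integral points. For this I need $\mathcal{X}_{r_2}$ to be a finite type algebraic stack over $k$ with finite diagonal and $A$ to be a $\Z$-finitely generated integrally closed subring with $k=\overline{\kappa(A)}$ of characteristic $0$ — all of which are among the hypotheses (finite diagonal of $\mathcal{X}_{r_1}$ is assumed; finite diagonal of $\mathcal{X}_{r_2}$ must be deduced, see below). One subtlety: the twisting lemma is stated for models over $\Z$-finitely generated integrally closed $A$, and here $\mathcal{X}_{r_2}$ is literally given as a stack over $A$, which serves as its own model; we take $\mathcal{X}:=\mathcal{X}_{r_2}$ and $\mathcal{X}_{A}:=\mathcal{X}_{r_2}$, so condition (2) of Theorem \ref{thm:twists} directly yields that $\pi_0(\mathcal{X}_{r_2}(A))$ is finite, which is the claim.

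The main obstacle is bookkeeping around the finite diagonal hypothesis: Theorem \ref{thm:twists} requires $\mathcal{X}_{r_2}$ to have finite diagonal, but what is assumed is that $\mathcal{X}_{r_1}$ does. The fix is to note that finite diagonal descends appropriately: $\mathcal{X}_{r_2}\to \mathcal{X}$ is proper with finite diagonal as a morphism (being quasi-finite and separated, indeed $\mu_{r_2}$-gerbe-like on Picard data), and composing with $\mathcal{X}_{r_1}\to \mathcal{X}$ — which is proper \'etale, hence has finite diagonal — one concludes $\mathcal{X}$ has finite diagonal (a proper \'etale morphism with finite-diagonal source has finite-diagonal target is the relevant direction, or one argues directly that $\mathcal{X}_{r_1}\to\mathcal{X}$ being a $\mu_{r_1}$-gerbe in the cases of interest forces the diagonal of $\mathcal{X}$ to be finite since that of $\mathcal{X}_{r_1}$ is), and then $\mathcal{X}_{r_2}$, being proper and quasi-finite (hence with finite diagonal as a morphism) over the finite-diagonal stack $\mathcal{X}$, has finite diagonal as well. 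The other minor point to check is that $\mathcal{X}_{r_2}$ really is of finite type over $k$ and non-empty issues do not arise (finiteness of the empty set being trivially fine), but these are immediate from the construction in Definition \ref{def:universalbranch} and the algebraicity statements in Section \ref{sec:stacks}.
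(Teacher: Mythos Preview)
Your proof is correct and follows exactly the paper's route: descend arithmetic hyperbolicity from $\mathcal{X}_{r_1}$ to $\mathcal{X}$ via stacky Chevalley--Weil (Theorem \ref{thm:chev_weil}), lift to $\mathcal{X}_{r_2}$ via Lemma \ref{lem:qf}, then apply the twisting lemma. The only paragraph that needs tidying is the finite-diagonal bookkeeping: the remark about ``composing'' the two maps to $\mathcal{X}$ is a slip (they are not composable), and the alternative via ``$\mathcal{X}_{r_1}\to\mathcal{X}$ being a $\mu_{r_1}$-gerbe'' is not valid in general (that requires the extra hypothesis of Corollary \ref{cor:gerbenotorsion}); the correct argument is the one you also state --- a proper quasi-finite \emph{surjective} morphism with finite-diagonal source has finite-diagonal target --- and the paper packages exactly this as Lemma \ref{lem:finitediagonal}, using the surjectivity coming from $r_1$-divisibility.
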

\begin{proof}
	
	The stack $\mathcal{X}$ has finite diagonal and is arithmetically
	hyperbolic over $k$ since $\mathcal{X}_{r_1}$ has these properties. Indeed, $\mathcal{X}_{r_1} \to \mathcal{X}$ is proper, quasi-finite and surjective (see Theorem \ref{thm:properetale}) so these properties descend (see Corollary \ref{cor:arithmetic_dp} and Lemma \ref{lem:finitediagonal}). Moreover, as 
	$\mathcal{X}_{r_2} \to \mathcal{X}$ is proper and quasi-finite 
	by Theorem \ref{thm:properetale}, 
	we see that $\mathcal{X}_{r_2}$ has finite diagonal (by Lemma \ref{lem:finitediagonal}) and is arithmetically hyperbolic (by Lemma \ref{lem:qf}). The result now follows from the
	twisting lemma (Theorem \ref{thm:twists}).
\end{proof}

The following is  well-known; we include a proof for completeness.

\begin{lemma} \label{lem:finitediagonal} Let $f: X \to Y$ be a proper quasi-finite morphism of stacks over a scheme $S$.  If $Y$ has finite diagonal,  then so does $X$. The converse holds if $f$ is surjective. 
\end{lemma}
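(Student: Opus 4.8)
The statement concerns the diagonal morphisms $\Delta_X \colon X \to X \times_S X$ and $\Delta_Y \colon Y \to Y \times_S Y$, and we want to transfer finiteness of $\Delta_Y$ to finiteness of $\Delta_X$ (and conversely when $f$ is surjective). Recall that a morphism of stacks is finite if and only if it is representable, affine and proper; since properness and affineness are well-behaved under the operations below, the real content is representability together with the properness/quasi-finiteness bookkeeping. The first step is to write down the standard factorization of $\Delta_X$ relative to $f$:
\begin{equation*}
X \xrightarrow{\ \Delta_{X/Y}\ } X \times_Y X \xrightarrow{\ \mathrm{pr}\ } X \times_S X,
\end{equation*}
where the second map is the base change of $\Delta_Y$ along $X\times_S X \to Y \times_S Y$. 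So $\Delta_X$ is finite provided both $\Delta_{X/Y}$ and this base change of $\Delta_Y$ are finite.

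For the second factor: if $Y$ has finite diagonal then $\Delta_Y$ is finite, and finiteness is stable under base change, so $\mathrm{pr}$ above is finite. For the first factor $\Delta_{X/Y}$: since $f$ is quasi-finite and separated — note a proper quasi-finite morphism is in particular separated — its diagonal $\Delta_{X/Y}$ is a closed immersion (separatedness) which is moreover unramified and locally quasi-finite; being a monomorphism that is locally of finite type and proper (it is a closed immersion), it is finite. Composing, $\Delta_X = \mathrm{pr}\circ \Delta_{X/Y}$ is a composition of finite morphisms, hence finite. This proves the forward direction. One should double-check that "proper quasi-finite" indeed forces $f$ separated so that $\Delta_{X/Y}$ is a closed immersion; this is where I expect to have to be slightly careful, since the precise hypotheses on $f$ matter (finite type is built into quasi-finite per the paper's Conventions, and properness supplies separatedness).

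For the converse, assume $X$ has finite diagonal and $f$ is surjective (in addition to proper and quasi-finite). Now consider instead the factorization of $\Delta_Y$. The idea is that $f \times_S f \colon X \times_S X \to Y \times_S Y$ is proper and surjective, and we can descend finiteness of $\Delta_Y$ from finiteness of its pullback. Concretely, form the cartesian square with $\Delta_Y$ and $f\times_S f$; the pullback of $\Delta_Y$ is a morphism $Z \to X\times_S X$ where $Z = (X\times_S X)\times_{Y\times_S Y} Y$. There is a natural map $X \to Z$ induced by $(\mathrm{id},f)$ fitting over $\Delta_X$; using that $\Delta_X$ is finite and that $X \to Z$ and $Z \to X\times_S X$ interact correctly, together with surjectivity of $f\times_S f$ (hence of $Z \to Y$ after suitable identification), one concludes $\Delta_Y$ is finite by a standard descent/cancellation argument: a morphism which becomes finite after a proper surjective — indeed fppf or even just proper surjective with the target reasonable — base change is itself finite, provided it is already known to be e.g. separated and of finite type, which it is since $X$ has finite (hence separated, finite type) diagonal pushes these properties onto $\Delta_Y$ via $f$ quasi-finite separated. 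I would phrase this last step via: finiteness can be checked after a proper surjective base change (fpqc-type descent for the property "finite" along proper surjections, combined with the fact that $\Delta_Y$ is automatically separated and quasi-finite once $\Delta_X$ is finite and $f$ is separated quasi-finite).

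\textbf{Main obstacle.} The routine parts are the factorizations and the stability of "finite" under base change and composition. The delicate point is the converse: justifying that finiteness of $\Delta_Y$ descends along the proper surjective morphism $f\times_S f$. I would handle this by first using $X$-finiteness of $\Delta_X$ plus quasi-finiteness and separatedness of $f$ to show $\Delta_Y$ is at least representable, separated, quasi-finite and proper (properness of $\Delta_Y$ following from properness of $\Delta_X$ and $f$ via the cancellation property for proper morphisms applied to $X \to Y \times_S Y$), and then invoke that a representable, separated, quasi-finite and proper morphism is finite (this is exactly the equivalence "proper $+$ quasi-finite $+$ representable $\Leftrightarrow$ finite" quoted in the paper's Conventions via \cite[Tag~0CHU]{stacks-project}, combined with the classical Zariski's main theorem statement that proper quasi-finite morphisms of schemes are finite). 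In fact this reframing shows I may not need genuine descent at all: in both directions it suffices to verify that $\Delta_X$ (resp. $\Delta_Y$) is representable, proper and quasi-finite, and then quote the cited equivalence. So the cleanest writeup is: (i) $f$ proper quasi-finite $\Rightarrow$ $f$ separated $\Rightarrow$ $\Delta_{X/Y}$ finite; (ii) $\Delta_X = (\text{base change of }\Delta_Y)\circ \Delta_{X/Y}$ gives the forward direction; (iii) for the converse, $X \to Y\times_S Y$ (namely $(f,f)\circ\,$? no — rather $\Delta_Y \circ f = (f\times_S f)\circ \Delta_X$) is proper quasi-finite since it is $\Delta_X$ followed by the proper quasi-finite $f\times_S f$; then $\Delta_Y$ is proper quasi-finite by the cancellation property using $f$ surjective proper, and representable since $f$ is; conclude via the quoted equivalence.
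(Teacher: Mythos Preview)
Your forward direction is essentially correct and takes the same route as the paper (which simply cites \cite[Tag 050L]{stacks-project}), though you are more explicit. One slip: for morphisms of algebraic stacks, ``$f$ separated'' means $\Delta_{X/Y}$ is \emph{proper}, not a closed immersion --- that would require $f$ representable, which is not assumed. You effectively correct this yourself at the end by arguing that $\Delta_{X/Y}$ is representable (always true for diagonals over the base, hence for relative diagonals of algebraic stacks), proper (since $f$ is separated), and locally quasi-finite (since $f$ is quasi-finite, hence quasi-DM), so finite. Good.

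The converse has a genuine gap. Your ``cancellation property using $f$ surjective proper'' is not the standard cancellation lemma (which goes the other way) and does not, as stated, yield that $\Delta_Y$ is \emph{proper}. From $\Delta_Y\circ f = (f\times_S f)\circ\Delta_X$ proper quasi-finite and $f$ surjective you do get that $\Delta_Y$ is universally closed and has finite discrete geometric fibres --- this is the easy topological descent you allude to, and it is exactly what the paper does. But universally closed $+$ quasi-finite $+$ representable does \emph{not} imply finite: you still need $\Delta_Y$ separated (think of the line with doubled origin mapping to the line). The paper handles this by a second application of the same descent, one level down: since $\Delta_{\Delta_{X/S}}$ and $f\times_S f$ are universally closed, so is $\Delta_{\Delta_{Y/S}}$, and then \cite[Tag 04Z0]{stacks-project} gives that $Y$ is separated. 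You need to add exactly this step. Also, your justification ``representable since $f$ is'' is off --- $f$ is not assumed representable --- though the conclusion is fine because the diagonal of any algebraic stack is representable by algebraic spaces.
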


\begin{proof}
Recall that  an algebraic stack is said to be quasi-DM if its diagonal is locally quasi-finite (see \cite[Tag 01TD]{stacks-project}). An algebraic stack $Z$ has finite diagonal if and only if $Z$ is quasi-DM and separated (see \cite[Tag 02LS]{stacks-project}).

Suppose $Y$ has finite diagonal. Since the morphism $X \to Y$ and the stack $Y$ are quasi-DM and separated, the same is true for $X$ (see \cite[Tag 050L]{stacks-project}). Thus, the diagonal of $X$ is finite. 

Now suppose that $X$ has finite diagonal. We first prove that $Y$ is quasi-DM (i.e.~that $\Delta_{Y/S}$ is locally quasi-finite). Consider the following diagram
\[  \xymatrix{ X \ar[d]_f \ar[rr]^{\Delta_{X/S}} &  &  X \times_S X \ar[d]^{f \times f} \\  Y \ar[rr]^{\Delta_{Y/S}} &  &  Y \times_S Y. }  \]
As $X$ has finite diagonal and $X \to Y$ is proper quasi-finite, it follows that the geometric fibers of $X \to Y \times_S Y$ are finite. As $X \to Y$ is proper, quasi-finite and surjective, it follows that the geometric fibres of $\Delta_{Y/S}$ are finite and discrete, and since $\Delta_{Y/S}$ is representable it follows that $\Delta_{Y/S}$ is quasi-finite, i.e.~$Y$ is quasi-DM. Moreover $\Delta_{X/S}$ and $f \times f$ are universally closed so the same is true for $\Delta_{Y/S}$.

Similarly, because $\Delta_{\Delta_X}$ and $f \times f$ are universally closed, the same is true for $\Delta_{\Delta_{Y/S}}$. Hence \cite[Tag 04Z0 (1)]{stacks-project} implies $Y$ is separated, as required.
\end{proof}

\section{Cyclic covers of projective space}
\label{section:pres}

In this section we study stacks of smooth hypersurfaces in projective space and their associated cyclic covers. We use these properties to then prove Theorem \ref{thm:Shaf_via_Fano}.

\subsection{Definition and basic properties}

For an integer $r\geq 2$ and $n\geq 1$, we let $\mathrm{Hilb}_{r,n} $ be the Hilbert scheme of hypersurfaces of degree $r$ in $\mathbb{P}^{n+1}$ over $\mathbb{Z}$. Let $\mathrm{Hilb}^{\textrm{sm}}_{r,n}$ be the open subscheme parametrizing smooth hypersurfaces.  
The  automorphism   group scheme $\PGL_{n+2} $ of $\mathbb{P}^{n+1}_{\ZZ}$   acts on $\mathrm{Hilb}_{r,n}^{\textrm{sm}}$, and we let $\mathcal{C}'_{(r;n)}$ denote the quotient stack $[\mathrm{Hilb}^{\textrm{sm}}_{r,n}/ \PGL_{n+2}  ]$. We refer to $\mathcal{C}'_{(r;n)}$ as the stack of smooth hypersurfaces of degree $r$ in $\mathbb{P}^{n+1}$ over $\mathbb{Z}$.

By construction, the stack $\mathcal{C}'_{(r;n)}$ is a smooth finite type  algebraic stack over $\mathbb Z$ with affine diagonal. If $r \geq 3$, then $\mathcal{C}'_{(r;n)}$ is separated by \cite[\S 16]{GIT} (see also \cite[Thm.~1.7]{Ben13}). If $(r,n) \neq (3,1)$, then $\mathcal{C}'_{(r;n)}$ is Deligne--Mumford over $\ZZ$, whereas $\mathcal{C}'_{(3;1)}$ is Deligne--Mumford over $\Z[1/3]$ \cite[Thm.~1.6]{Ben13}.
First, we describe the functor of points of $\mathcal{C}'_{(r;n)}$.

	\begin{lemma} \label{descriptionofhyper}
		The stack $\mathcal{C}'_{(r;n)}$  is isomorphic to $\mathcal{C}_{(r;n)}$ (see Example \ref{ex:BS}).
	\end{lemma}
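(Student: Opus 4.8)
The strategy is to exhibit a natural morphism in each direction and check they are mutually inverse equivalences of stacks. Recall $\mathcal{C}_{(r;n)}$ parametrizes pairs $(f\colon P \to S,\ j\colon H \hookrightarrow P)$ with $P$ a Brauer--Severi scheme of relative dimension $n+1$ and $H$ a smooth relative Cartier divisor of fibrewise degree $r$, while $\mathcal{C}'_{(r;n)} = [\mathrm{Hilb}^{\mathrm{sm}}_{r,n}/\PGL_{n+2}]$. The key geometric input is that a Brauer--Severi scheme that admits a Cartier divisor of degree $r$ in its Picard group need not be Zariski-locally trivial, but the quotient stack construction is precisely designed to encode such twisted forms: an $S$-point of $[\mathrm{Hilb}^{\mathrm{sm}}_{r,n}/\PGL_{n+2}]$ is (by the standard description of quotient stacks) a $\PGL_{n+2}$-torsor $T \to S$ together with a $\PGL_{n+2}$-equivariant morphism $T \to \mathrm{Hilb}^{\mathrm{sm}}_{r,n}$, and such a torsor is the same as a Brauer--Severi scheme $P \to S$ of relative dimension $n+1$, with the equivariant map to $\mathrm{Hilb}^{\mathrm{sm}}_{r,n}$ cutting out precisely a smooth degree-$r$ relative hypersurface $H \subset P$.

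\emph{First I would} construct the functor $\mathcal{C}'_{(r;n)} \to \mathcal{C}_{(r;n)}$. Given an $S$-point of $\mathcal{C}'_{(r;n)}$, i.e.\ a $\PGL_{n+2}$-torsor $T$ and an equivariant map $\phi\colon T \to \mathrm{Hilb}^{\mathrm{sm}}_{r,n}$, twist the trivial family $\mathbb{P}^{n+1}_S \times \mathrm{Hilb}^{\mathrm{sm}}_{r,n}$ (with its tautological universal hypersurface) by $T$ to obtain a Brauer--Severi scheme $P = T \times^{\PGL_{n+2}} \mathbb{P}^{n+1}$ over $S$ and a closed immersion $H \hookrightarrow P$ which on each geometric fibre is a smooth hypersurface of degree $r$; one must check $H$ is a relative Cartier divisor flat over $S$ (this is automatic since it becomes so after the fppf base change $T \to S$, and flatness/being Cartier descends). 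This produces an object of $\mathcal{C}_{(r;n)}$, functorially, and the construction commutes with base change. \emph{Conversely}, given $(P \to S, H \hookrightarrow P) \in \mathcal{C}_{(r;n)}$, let $T = \underline{\mathrm{Isom}}_S(\mathbb{P}^{n+1}_S, P)$ be the sheaf of $\PP^{n+1}$-bundle isomorphisms; this is a $\PGL_{n+2}$-torsor (fppf-locally $P$ is trivial), and pulling $H$ back along the tautological isomorphism over $T$ gives a $T$-point of $\mathrm{Hilb}^{\mathrm{sm}}_{r,n}$ which is $\PGL_{n+2}$-equivariant by construction, hence an $S$-point of $[\mathrm{Hilb}^{\mathrm{sm}}_{r,n}/\PGL_{n+2}]$. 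Checking the two functors are inverse to each other (on objects and on morphisms/automorphisms — note $\Aut(P,H)$ matches the stabilizer data) is then a formal unwinding of these descriptions.

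\emph{The main obstacle} I anticipate is purely bookkeeping rather than conceptual: one must be careful that ``Brauer--Severi scheme of relative dimension $n+1$ with a degree-$r$ Cartier divisor'' is exactly the same data as a $\PGL_{n+2}$-torsor, with no extra structure lost or gained — in particular that the automorphisms of $(P, H)$ as an object of $\mathcal{C}_{(r;n)}$ correspond exactly to the equivariant automorphisms appearing in the quotient stack, so that the functor is fully faithful on the level of 2-morphisms. Here one uses that $\Aut(\mathbb{P}^{n+1}) = \PGL_{n+2}$ (so the torsor really has structure group $\PGL_{n+2}$ and nothing larger), together with fppf descent to reduce all verifications to the trivial family where everything is tautological. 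The smoothness of $H$ is preserved in both directions since it can be checked fppf-locally. Once these identifications are in place, the equivalence $\mathcal{C}'_{(r;n)} \simeq \mathcal{C}_{(r;n)}$ follows, and one observes that this is compatible with the identification of $\mathcal{C}_{(r;n)}$ as an open substack of $\mathbb{V}^{\circ}(u_*L)$ over $\mathcal{B}_{(r;n)}$ from Example~\ref{ex:BS}.
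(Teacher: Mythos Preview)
Your proposal is correct and uses essentially the same idea as the paper: both identify objects of $[\mathrm{Hilb}^{\mathrm{sm}}_{r,n}/\PGL_{n+2}]$ with pairs (Brauer--Severi scheme, smooth degree-$r$ hypersurface) via the $\underline{\mathrm{Isom}}_S(\mathbb{P}^{n+1}_S,P)$ torsor and descent. The only organizational difference is that the paper packages the argument by computing the $2$-fibre product $\mathcal{C}_{(r;n)}\times_{B\PGL_{n+2}}\Spec\mathbb{Z}$ and identifying it $\PGL_{n+2}$-equivariantly with $\mathrm{Hilb}^{\mathrm{sm}}_{r,n}$ (thereby reducing to a statement about a sheaf rather than a stack), whereas you write the inverse equivalences explicitly; the underlying content is the same.
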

	
	\begin{proof} Consider the morphism $\phi: \mathcal{C}_{(r;n)} \to B\PGL_{n+2}$ which sends a pair $(P \to S, H \to P)$ to the $\PGL_{n+2}$-torsor $\underline{\mathrm{Isom}}_{S}(\mathbb{P}^{n+1}_S, P)$ over $S$ corresponding to the Brauer--Severi scheme $P \to S$. 
		Now, consider the $2$-fibre product \[\mathcal{C}_{(r;n)} \times_{B\PGL_{n+2}} \Spec \mathbb{Z},\] where the morphism $\Spec \mathbb{Z}  \to B\PGL_{n+2}$ is the universal $\PGL_{n+2}$-torsor, i.e.~it sends a scheme $S$ to the trivial $\PGL_{n+2}$-torsor. If $S$ is a scheme, then the $S$-objects of this  fibre product are tuples 
		\[(P \to S, H \to P, f : P \to \mathbb{P}^{n+1}_{S}),\]
		 where $P \to S$ is a Brauer--Severi scheme of relative dimension $n$, the scheme $H$ is a smooth hypersurface of degree $r$, and $f$ is an isomorphism of schemes over $S$. A morphism between two triples 
		\[(P \to S, H \to P, f) \to (P' \to S, H' \to P', f')\]
		is the data of two  isomorphisms $a: H \to H'$ and $b: P \to P'$ 
		such that the diagram
\[\xymatrix{ H \ar[d]_a \ar[r] & P \ar[d]_b \ar[r]^f & \P^{n+1}_S \ar[d]^{\mathrm{id}} \\
H' \ar[r] & P' \ar[r]^{f'} & \P^{n+1}_S  }
		\]  
		 commutes  in the category of $S$-schemes. From this diagram, we see that  objects in this groupoid have no nontrivial automorphisms. Moreover, every object in this groupoid is uniquely isomorphic to a triple of the form $(\mathbb{P}^n_S \to S, H \to \mathbb{P}^n_S, \mathrm{id}_{\mathbb{P}^n_S})$. Thus, we may  identify the stack with its associated sheaf. Now,  the morphism of functors
		\[\mathcal{C}_{(r;n)} \times_{B\PGL_{n+2}} \Spec \mathbb{Z} \to \Hilb^{\textrm{sm}}_{(r;n)}\]
	defined by
		\[(\mathbb{P}^{n+1}_{S} \to S, H_{S} \to \mathbb{P}^n_{S}, \mathrm{id}: \mathbb{P}^{n+1}_{S} \to \mathbb{P}^{n+1}_{S}) \mapsto   
		(H_{S} \to \mathbb{P}^{n+1}_{S}) \]
		is an isomorphism compatible with the respective actions of $\PGL_{n+2}$.
		Thus
		\[\mathcal{C}_{(r;n)}'=[\Hilb^{\textrm{sm}}_{(r;n)}/\PGL_{n+2}] \cong \mathcal{C}_{(r;n)}. \] This concludes the proof. 
	\end{proof}

Henceforth, we identify $\mathcal{C}'_{(r;n)}$ and $\mathcal{C}_{(r;n)}$.

\subsection{From cyclic covers back to hypersurfaces}

Let $\mathbb{A}^{\circ}(r,n)$ denote $\mathbb{V}^{\circ}(\pi_*\mathcal{O}_{\mathbb{P}^n_{\mathbb{Z}}}(r))$ where $\pi: \mathbb{P}^n_{\mathbb{Z}} \to \Spec \mathbb{Z}$. We view this as the (affine) space of nonzero degree $r$ forms in $n+1$ variables, and let $\mathbb{A}^{\text{sm}}(r,n)$ denote the open subset consisting of forms which define smooth hypersurfaces in $\mathbb{P}^n_{\mathbb{Z}}$.

Given a hypersuface $f(x) = 0$ of degree $r$, one can view the cyclic cover $f(x) = x_{n+1}^r$ as a hypersurface in projective space in its own right. We now show that this construction gives rise to morphism of stacks. 
Consider the map $\phi: \mathbb{A}^{\textrm{sm}}(r,n)_{\mathbb{Z}[1/r]} \to \mathbb{A}^{\textrm{sm}}(r,n+1)_{\mathbb{Z}[1/r]}$ which sends a smooth degree $r$ form $f$ in $x_0, \dots ,x_n$
		to $x_{n+1}^r-f$. Moreover, consider the  group homomorphism $\eta: \GL_{n+1} \to \GL_{n+2}$  defined by 
		\[
		A \mapsto \left(\begin{array}{cc} A & 0   \\ 0 & 1  \end{array} \right).
		\]
	 Observe that the morphism $\phi: \mathbb{A}^{\textrm{sm}}(r,n)_{\mathbb{Z}[1/r]} \to \mathbb{A}^{\textrm{sm}}(r,n+1)_{\mathbb{Z}[1/r]}$ \ is equivariant with respect to $\eta$. In particular, $\phi$ induces a  morphism	
	\begin{align*}
	&\mathcal{H}^{\textrm{sm}}(n,r,1)_{\mathbb{Z}[1/r]} \\
	&=[\mathbb{A}^{\textrm{sm}}(r,n)_{\mathbb{Z}[1/r]}/\GL_{n+1}] \longrightarrow [\mathbb{A}^{\textrm{sm}}(r,n+1)_{\mathbb{Z}[1/r]}/\GL_{n+2}]=\mathcal{H}^{\textrm{sm}}(n+1,r,1)_{\mathbb{Z}[1/r]}.
	\end{align*}
(See \cite[Thm.~4.1]{ArsieVistoli} for this explicit presentation as a quotient stack.) 
		On the level of geometric points, this maps a cyclic cover $Z \to \mathbb{P}^n_k$ branched along the divisor defined by the degree $r$ form $f(x)$ to the cyclic cover $\tilde{Z} \to \mathbb{P}^{n+1}_k$ branched along the divisor defined by the form $x_{n+1}^r-f$. 
		
		Composing  with the $\mu_r$-gerbe $\mathcal{H}^{\textrm{sm}}(n+1,r,1)_{\mathbb{Z}[1/r]}\to \mathcal{C}_{(r;n),\mathbb{Z}[1/r]}$ from Example \ref{ex:cyclicprojectivespace} gives  a morphism
		\[
		\mathcal{H}^{\textrm{sm}}(n,r,1)_{\mathbb{Z}[1/r]} \to \mathcal{C}_{(r;n),\mathbb{Z}[1/r]}.
		\]
On the level of geometric points, this morphism sends a cyclic cover $Z \to \P^n$ branched along the degree $r$ form $f(x_0,...,x_n)$ to the hypersurface in $\P^{n+1}$ defined by $x_{n+1}^r-f$.

	\begin{proposition} \label{unramifiedmapofstacks}
		The morphism $\mathcal{H}^{\textrm{sm}}(n,r,1)_{\mathbb{Z}[1/r]}  \to \mathcal{C}_{(r;n), \mathbb{Z}[1/r]}$ is unramified.
	\end{proposition}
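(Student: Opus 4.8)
The strategy is to unwind the morphism into the composite described just above and check that unramifiedness is inherited stepwise. Recall that a morphism of algebraic stacks is unramified if and only if it is locally of finite type and its diagonal is \'etale (equivalently, formally unramified and locally of finite type); also, a composite of unramified morphisms is unramified, and base change preserves unramifiedness. So it suffices to (i) show the ``cyclic covering'' map $\psi: \mathcal{H}^{\textrm{sm}}(n,r,1)_{\mathbb{Z}[1/r]} \to \mathcal{H}^{\textrm{sm}}(n+1,r,1)_{\mathbb{Z}[1/r]}$ is unramified, and (ii) recall from Example \ref{ex:cyclicprojectivespace} that $\mathcal{H}^{\textrm{sm}}(n+1,r,1)_{\mathbb{Z}[1/r]} \to \mathcal{C}_{(r;n),\mathbb{Z}[1/r]}$ is a $\mu_r$-gerbe, hence \'etale (since $r$ is invertible), hence in particular unramified. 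Combining these gives the claim.

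For step (i), I would work with the explicit quotient-stack presentations $\mathcal{H}^{\textrm{sm}}(n,r,1)_{\mathbb{Z}[1/r]} = [\mathbb{A}^{\textrm{sm}}(r,n)_{\mathbb{Z}[1/r]}/\GL_{n+1}]$ and likewise for $n+1$, together with the $\eta$-equivariant map $\phi: \mathbb{A}^{\textrm{sm}}(r,n)_{\mathbb{Z}[1/r]} \to \mathbb{A}^{\textrm{sm}}(r,n+1)_{\mathbb{Z}[1/r]}$, $f \mapsto x_{n+1}^r - f$. The map $\psi$ induced on quotient stacks fits into a $2$-commutative square whose other sides are the quotient maps; unramifiedness of a morphism of quotient stacks can be checked after the smooth (in fact faithfully flat) base change along $\mathbb{A}^{\textrm{sm}}(r,n+1)_{\mathbb{Z}[1/r]} \to \mathcal{H}^{\textrm{sm}}(n+1,r,1)_{\mathbb{Z}[1/r]}$. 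The fibre product $\mathbb{A}^{\textrm{sm}}(r,n+1)_{\mathbb{Z}[1/r]} \times_{\mathcal{H}^{\textrm{sm}}(n+1,r,1)} \mathcal{H}^{\textrm{sm}}(n,r,1)$ is the scheme $\mathbb{A}^{\textrm{sm}}(r,n)_{\mathbb{Z}[1/r]} \times^{\eta} \GL_{n+2}$ (the associated bundle), and the induced morphism to $\mathbb{A}^{\textrm{sm}}(r,n+1)_{\mathbb{Z}[1/r]}$ is $(f, B) \mapsto B \cdot (x_{n+1}^r - f)$. So it all comes down to the concrete claim: \emph{the morphism of $\mathbb{Z}[1/r]$-schemes $\mathbb{A}^{\textrm{sm}}(r,n)_{\mathbb{Z}[1/r]} \times \GL_{n+2} \to \mathbb{A}^{\textrm{sm}}(r,n+1)_{\mathbb{Z}[1/r]}$, $(f,B) \mapsto B\cdot(x_{n+1}^r - f)$, is unramified.} Since source and target are finite-type $\mathbb{Z}[1/r]$-schemes, this can be checked on fibres over geometric points, i.e.\ one must show the fibres are \'etale over the image, equivalently that the map has trivial relative cotangent sheaf --- concretely, that at a point $(f,B)$ the only infinitesimal deformations of $(f,B)$ fixing the form $B\cdot(x_{n+1}^r-f)$ are trivial.

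\textbf{The main obstacle.} The crux is the infinitesimal/rigidity computation above: one must show that if $g \in \GL_{n+2}$ and $g \cdot (x_{n+1}^r - f) = \lambda(x_{n+1}^r - f')$ for a scalar $\lambda$ and smooth forms $f, f'$ in $x_0,\dots,x_n$ (and more precisely its first-order version), then $g$ is forced to preserve the coordinate $x_{n+1}$ up to scalar, so that $g$ comes from the subgroup $\eta(\GL_{n+1})$ times the center. The key geometric input is that the hyperplane $\{x_{n+1}=0\}$ is intrinsically characterized by the cyclic cover $x_{n+1}^r = f$: it is the fixed locus of the $\mu_r$-action (when $r$ is invertible), or, when $r \geq 2$ and $f$ is smooth, it can be recovered from the singularities of the discriminant / the branch structure --- in any case it is canonical, hence preserved by any isomorphism. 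Differentiating this statement gives the vanishing of the relative cotangent module and hence unramifiedness. An alternative, perhaps cleaner route is to avoid the presentation entirely: identify $\psi$ with the morphism $\mathcal{X}_r \to \mathcal{X}'_r$ induced functorially by a suitable morphism of moduli stacks of divisorial pairs $\mathcal{C}_{(r;n-1)} \to \mathcal{C}_{(r;n)}$ (sending $D \subset \mathbb{P}^n$ to the cone-type hyperplane-section data inside $\mathbb{P}^{n+1}$), note that such a morphism of divisorial-pair stacks is a locally closed immersion (hence unramified), and invoke that the formation $\mathcal{X} \mapsto \mathcal{X}_r$ in Theorem \ref{thm:properetale} is compatible with such morphisms --- reducing unramifiedness of $\psi$ to unramifiedness of a locally closed immersion of $\mathcal{C}$-type stacks, which is automatic. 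I would present whichever of these is shortest given what has already been set up; I expect the explicit presentation route to be the one the authors take, with the rigidity statement about the canonicity of $\{x_{n+1}=0\}$ being the one genuine point requiring proof.
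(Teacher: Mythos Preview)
Your decomposition into $\psi$ followed by the $\mu_r$-gerbe is the same factorisation the paper uses, and the second step is indeed \'etale. Where you diverge is in how to handle $\psi$. The paper does \emph{not} work with the quotient presentation and the map $(f,B)\mapsto B\cdot(x_{n+1}^r-f)$. Instead it checks injectivity of the differential at each geometric point via deformation theory: using the \'etale identifications $T_{Z\to\mathbb{P}^n}(\mathcal{H}^{\mathrm{sm}}(n,r,1))\cong T_{H\subset\mathbb{P}^n}(\mathcal{C}_{(r;n-1)})=\mathrm{H}^1(\mathbb{P}^n,T_{\mathbb{P}^n}\langle H\rangle)$ (and likewise on the target), it reduces to showing $\mathrm{H}^1(\mathbb{P}^n,T_{\mathbb{P}^n}\langle H\rangle)\to\mathrm{H}^1(Z,T_Z)$ is injective, which follows because $T_{\mathbb{P}^n}\langle H\rangle$ is a direct summand of $f_*T_Z$ (a result of Ivinskis/Wehler). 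So the paper trades a linear-algebra computation for a cohomological input from the literature.

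Your presentation route does work, but the justification you give is off. The $\mu_r$-action you invoke to make $\{x_{n+1}=0\}$ ``canonical'' is the cyclic action on the cover of $\mathbb{P}^n$; that action is not part of the data of a point of $\mathcal{H}^{\mathrm{sm}}(n+1,r,1)$, which only remembers the form $x_{n+1}^r-f$ up to $\GL_{n+2}$ (the $\mu_r$ there acts on the cyclic cover of $\mathbb{P}^{n+1}$, with fixed locus the branch hypersurface, not the hyperplane). What actually makes the infinitesimal computation go through is more elementary: if $A\in\mathfrak{gl}_{n+2}$ satisfies $A\cdot(x_{n+1}^r-f)\in k[x_0,\dots,x_n]_r$, then comparing coefficients of $x_{n+1}^r$, $x_{n+1}^{r-1}$, and $x_{n+1}$ forces (for $r\ge 3$) the last row and column of $A$ to vanish, the key point being that $\sum_i c_i\partial_i f=0$ implies $c_i=0$ for smooth $f$ since the partials form a regular sequence. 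For $r=2$ the off-diagonal entries need not vanish, but the residual term in $k[x_0,\dots,x_n]_2$ is still in the $\mathfrak{gl}_{n+1}$-orbit of $f$, so injectivity holds there too. This argument is arguably more self-contained than the paper's, which relies on the Ivinskis/Wehler splitting.

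Your alternative route at the end does not work as stated: there is no natural morphism of divisorial-pair stacks $\mathcal{C}_{(r;n-1)}\to\mathcal{C}_{(r;n)}$, since passing from $H\subset\mathbb{P}^n$ to something in $\mathbb{P}^{n+1}$ by a cone-type construction produces a singular ambient space or a non-divisorial datum.
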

	\begin{proof}   
Since $\mathcal{H}^{\textrm{sm}}(n,r,1)_{\mathbb{Z}[1/r]}$ and $\mathcal{C}_{(r;n), \mathbb{Z}[1/r]}$ are both Deligne--Mumford stacks, there is a commutative square
 \[  \xymatrix{ U \ar[d]_ a \ar[r]^g &  V \ar[d]^ b \\  \mathcal{H}^{\textrm{sm}}(n,r,1)_{\mathbb{Z}[1/r]} \ar[r]^f &  \mathcal{C}_{(r;n), \mathbb{Z}[1/r]} }  \]
where $a$ and $b$ are \'etale covers by schemes. By \cite[Tag 0CIT]{stacks-project} we need to show that $g$ is unramified. However, because $a$ and $b$ each induce an isomorphism on tangent spaces, it is enough to show that, for every geometric point \[x\colon~\Spec k \to \mathcal{H}^{\textrm{sm}}(n,r,1)_{\mathbb{Z}[1/r]},\] the induced map on tangent spaces $\text{d}f$ is injective. Indeed, in that case $\text{d}g$ is injective as well and therefore $g$ is unramified by \cite[Tag~0B2G]{stacks-project}, as desired.

Note that $x$ corresponds to a cyclic cover $Z \to \mathbb{P}^n_k$. We let  $V(f(x_0,...,x_n))=H \subset \mathbb{P}^n_k$ denote its branch locus. 
Since $\mathcal{H}^{\textrm{sm}}(n,r,1)_{\mathbb{Z}[1/r]}  \to \mathcal{C}_{(r;n-1), \mathbb{Z}[1/r]}$ is \'etale by Theorem \ref{thm:properetale}, we have  a natural isomorphism of tangent spaces 
		\[T_{Z \to \mathbb{P}^n_k}(\mathcal{H}^{\textrm{sm}}(n,r,1)) \to T_{H \subset \mathbb{P}^n_k}(\mathcal{C}_{(r;n-1)})\]
		   By \cite[Prop.~3.4.17]{sernesidef}, we have  \[T_{H \subset \mathbb{P}^n_k}(\mathcal{C}_{(r;n-1)}) = \mathrm{H}^1(\mathbb{P}^n_k, T_{\mathbb{P}^n_{k}}\langle H\rangle ),\] where 
		\[T_{\mathbb{P}^n_{k}}\langle H\rangle=\ker[T_{\mathbb{P}^n_k} \to N_{H/\mathbb{P}^n_k}].\]

  Let $\widetilde{Z}\to \mathbb{P}^{n+1}_k$ be a degree $r$ uniform cyclic cover of $\mathbb{P}^{n+1}_k$ ramified precisely along $\tilde{H}=V(x_{n+1}^r-f) \subset \P^{n+1}_k$. We have the following   diagram of linear maps
 \[
 \xymatrix{   T_{Z \to \mathbb{P}^n_k}(\mathcal{H}^{\textrm{sm}}(n,r,1))  \ar[d]_{\cong} \ar[rrr]^{\textrm{the differential     }} & &  &   T_{\widetilde{Z} \to \mathbb{P}^{n+1}_k}(\mathcal{H}^{\textrm{sm}}(n+1,r,1))   \ar[d]^{\cong} \\  T_{H \subset \mathbb{P}^n_k}(\mathcal{C}_{(r;n-1)}) & & & T_{\tilde{H} \subset \mathbb{P}_k^{n+1}}(\mathcal{C}_{(r;n)})    }
 \]
 It follows from this diagram that, to prove the proposition, it suffices to show that the following map
		\[   T_{H \subset \mathbb{P}^n}(\mathcal{C}_{(r;n-1)})=\mathrm{H}^1(\mathbb{P}^n_k, T_{\mathbb{P}^n_k}\langle H\rangle) \to T_{\tilde{H} \subset \mathbb{P}^{n+1}}(\mathcal{C}_{(r;n)}) = \mathrm{H}^1(\mathbb{P}^{n+1}_k, T_{\mathbb{P}^{n+1}_k}\langle \tilde{H}\rangle)\] is injective.
		The latter group maps to $\mathrm{H}^1(\tilde{H},T_{\tilde{H}})$, being the deformation space of $\tilde{H}$ viewed as an abstract variety. Moreover, $Z$ and $\tilde{H}$ are abstractly isomorphic: restricting the projection $\mathbb{P}^{n+1}_k \dasharrow \mathbb{P}^n_k$ to $\tilde{H}$ gives $\tilde{H}$ the structure of a cyclic cover of degree $r$ branched along $H$. Thus to prove the proposition, it suffices to show that the natural map \[\mathrm{H}^1(\mathbb{P}^n_k, T_{\mathbb{P}^n_k}\langle H\rangle) \to \mathrm{H}^1(Z, T_Z)\]
		is injective. 
		 However, this follows from the fact that  the sheaf $T_{\mathbb{P}^n_k}\langle H\rangle $ is a direct summand of $f_*T_Z$ (see \cite[Lem.~3.1]{Ivinskis} or \cite[Prop.~3.8]{Wehler}).
	\end{proof}

	\begin{remark}
	Note that Proposition \ref{unramifiedmapofstacks} improves \cite[Lem.~2.3]{ViehwegZuoCM1}. 
	\end{remark}

\subsection{Arithmetic hyperbolicity of hypersurfaces and good reduction}

We now begin our arithmetic applications. Our first result makes clear the precise relationship between arithmetic hyperbolicity of the stack of smooth hypersurfaces, the Shafarevich conjecture (Conjecture \ref{conji}), and various possible finiteness statements concerning good reduction. The key issues are performing a descent from the algebraic closure, glueing different models, and dealing with integral points corresponding to hypersurfaces which are in a Brauer--Severi scheme and not a projective space. 

\begin{definition}
	Let $A$ be an integral domain 
	with fraction field $K$ and $X \subset \P^{n+1}_K$ a smooth hypersurface.
	We say that $X$ has \emph{good reduction over} $A$ if there exists a smooth hypersurface
	$\mathcal{X} \subset \P^{n+1}_A$ whose generic fibre is $K$-linearly isomorphic to $X$.
	We call such an $\mathcal{X}$, together with the choice of isomorphism,
	 a \emph{good model} for $X$.
\end{definition}
 
In the statement, for brevity by a pair $(K,S)$ we mean a number field $K$ and finite set of places $S$ of $K$ containing all the archimedean places.

\begin{proposition} \label{prop:Rewriting}
	Let $r \geq 3$ and $n \geq 1$.  Then the following are equivalent.
	\begin{enumerate}
		\item  For every algebraically closed field $k$ of characteristic zero, the stack $\mathcal{C}_{(r;n), k}$ is arithmetically hyperbolic
		over $k$.
		\item For every integrally closed finitely generated $\Z$-algebra $A$ with $\kappa(A)$
		of characteristic $0$, the set
		$\pi_0(\mathcal{C}_{(r;n)}(A))$ is finite. 
		\item  For all pairs $(K,S)$,
		the set of $\OO_{K,S}$-linear isomorphism classes of smooth
		hypersurfaces of degree $r$ in $\mathbb{P}^{n+1}_{\OO_{K,S}}$ is
		finite. 
		\item For all pairs $(K,S)$,
		the set of $K$-linear isomorphism classes of smooth
		hypersurfaces of degree $r$ in $\mathbb{P}^{n+1}_{K}$ with good
		reduction over $\O_{K,S}$ is finite.
		\item For all pairs $(K,S)$,
		the set of $K$-linear isomorphism classes of smooth
		hypersurfaces of degree $r$ in $\mathbb{P}^{n+1}_{K}$ with good
		reduction over the localisations $\O_{K,v}$ for all $v \notin S$ is finite.
	\end{enumerate}
\end{proposition}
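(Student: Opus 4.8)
The strategy is to show a cycle of implications, e.g. $(1)\Rightarrow(2)\Rightarrow(3)\Rightarrow(4)\Rightarrow(5)\Rightarrow(1)$, combining the stacky machinery of the previous section with standard spreading-out/descent arguments and the finiteness of $\H^1(\O_{K,S},\PGL_{n+2})$ from Lemma \ref{lem:GMB}.

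First I would treat $(1)\Rightarrow(2)$. Since $r\geq 3$, the stack $\mathcal{C}_{(r;n)}$ is a separated Deligne--Mumford stack of finite type over $\Z$, hence has finite diagonal, and so does every model over a finitely generated $\Z$-algebra $A$. Given such an $A$ with $\kappa(A)$ of characteristic $0$, set $k=\overline{\kappa(A)}$. Hypothesis $(1)$ gives arithmetic hyperbolicity of $\mathcal{C}_{(r;n),k}$, and then the twisting lemma (Theorem \ref{thm:twists}) applied to the model $\mathcal{C}_{(r;n),A}$ yields that $\pi_0(\mathcal{C}_{(r;n)}(A))$ is finite. (One should note that $\mathcal{C}_{(r;n)}$ is already defined over $\Z$, so no glueing of models is needed here.)

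Next, $(2)\Rightarrow(3)$: by Lemma \ref{descriptionofhyper}, $\mathcal{C}_{(r;n)}\cong\mathcal{C}'_{(r;n)}=[\mathrm{Hilb}^{\mathrm{sm}}_{r,n}/\PGL_{n+2}]$, and an $\O_{K,S}$-point of this quotient stack which lifts to $\mathrm{Hilb}^{\mathrm{sm}}_{r,n}(\O_{K,S})$ is exactly a smooth hypersurface of degree $r$ in $\mathbb{P}^{n+1}_{\O_{K,S}}$, with the isomorphisms in the groupoid being precisely $\PGL_{n+2}$-linear isomorphisms; so the set in $(3)$ is the image of $\mathrm{Hilb}^{\mathrm{sm}}_{r,n}(\O_{K,S})\to\pi_0(\mathcal{C}_{(r;n)}(\O_{K,S}))$, which is finite by $(2)$ applied to $A=\O_{K,S}$. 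For $(3)\Rightarrow(4)$ one argues that a smooth hypersurface $X\subset\mathbb{P}^{n+1}_K$ with good reduction over $\O_{K,S}$ has, by definition, a good model $\mathcal{X}\subset\mathbb{P}^{n+1}_{\O_{K,S}}$, and two such $X$ that are $\O_{K,S}$-linearly isomorphic are a fortiori $K$-linearly isomorphic; hence the map from the set in $(3)$ to the set in $(4)$ is surjective. The implication $(4)\Rightarrow(5)$ requires the glueing step: good reduction at every $v\notin S$ means one has a good model $\mathcal{X}_v$ over each $\O_{K,v}$; one must glue these (together with the generic fibre) into a single good model over some $\O_{K,S'}$ with $S'\supseteq S$ finite. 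This is a standard spreading-out argument --- a good model over $K$ extends to a good model over some $\O_{K,S'}$, and after enlarging $S'$ one can assume it agrees with $\mathcal{X}_v$ at the finitely many places of $S'\setminus S$ where the naive extension might fail --- so that $(4)$ for the pair $(K,S')$ gives finiteness of $K$-linear classes with good reduction over $\O_{K,S'}$, hence a fortiori over the localisations away from $S$.

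The implication I expect to be the main obstacle is $(5)\Rightarrow(1)$ (equivalently, getting back to arithmetic hyperbolicity over $k=\overline{\Q}$, or a general algebraically closed $k$ of characteristic $0$, from a statement about $K$-points with everywhere-local good reduction). The issue is threefold, as flagged in the text before the proposition: (a) passing from $k$-points down to $K$-points for a suitable number field $K$ --- here one uses that arithmetic hyperbolicity is tested on $\Z$-finitely generated subrings and a standard specialisation/Lefschetz-principle argument to reduce to $K$ a number field; (b) converting $\pi_0(\mathcal{C}_{(r;n)}(\O_{K,S}))$-finiteness into a statement about actual hypersurfaces, which is where the Brauer--Severi obstruction enters: an $\O_{K,S}$-point of $\mathcal{C}_{(r;n)}$ corresponds to a hypersurface in a Brauer--Severi \emph{scheme}, not necessarily in $\mathbb{P}^{n+1}$, so one must bound the twisted forms --- this is exactly where Lemma \ref{lem:GMB} (finiteness of $\H^1(\O_{K,S},\PGL_{n+2})$, applied via the torsor morphism $\mathcal{C}_{(r;n)}\to B\PGL_{n+2}$ of Lemma \ref{descriptionofhyper}) is used to show that only finitely many Brauer--Severi schemes occur and each contributes finitely many hypersurfaces after unscrewing the $\PGL_{n+2}$-action; and (c) the $(r,n)=(3,1)$-type pathologies are excluded by the hypothesis $r\geq 3$ (and $\mathcal{C}_{(3;1)}$ is Deligne--Mumford only over $\Z[1/3]$, but since we work in characteristic $0$ this is harmless). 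Concretely, to close the loop I would: take a model $\mathcal{C}\to\Spec A$ over a $\Z$-finitely generated $A\subset k$; for an arbitrary $\Z$-finitely generated $A\subseteq A'\subseteq k$, the finitely many points of $\pi_0(\mathcal{C}(A'))$ we wish to bound are controlled, after a finite extension, by the union over the finite set $\H^1(\O_{K,S},\PGL_{n+2})$ of twisted Hilbert schemes, each of which injects its rational points into a set controlled by $(5)$ (rational hypersurfaces with good reduction away from $S$); finiteness of each piece plus finiteness of the index set then gives the required finiteness of the image in $\pi_0(\mathcal{C}(k))$.
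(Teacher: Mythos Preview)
Your cycle and the individual implications are essentially the paper's, and your use of the twisting lemma for $(1)\Rightarrow(2)$ and of Lemma~\ref{lem:GMB} for the return to $(1)$ are exactly right. Two points deserve tightening.

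\textbf{The glueing step $(4)\Rightarrow(5)$.} Your sketch produces, by spreading out and patching in the local good models $\mathcal{X}_v$, a smooth proper scheme $\mathcal{X}\to\Spec\O_{K,S}$ whose fibres are degree~$r$ hypersurfaces. But this is not yet a \emph{good model}: you have not exhibited $\mathcal{X}$ as a closed subscheme of $\P^{n+1}_{\O_{K,S}}$. The natural very ample line bundle embeds $\mathcal{X}$ into the projective bundle $\P(f_*\O_{\mathcal{X}}(1))$, which is only a Brauer--Severi scheme a priori. The paper fixes this by first observing that it suffices to prove $(5)$ after enlarging $S$ (the set in $(5)$ only grows), and then choosing $S$ so that $\Pic\O_{K,S}=0$; over such a Dedekind base every locally free sheaf is free and the bundle is $\P^{n+1}$. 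Your phrase ``after enlarging $S'$ one can assume it agrees with $\mathcal{X}_v$'' also reads the wrong way round: enlarging $S'$ discards places, whereas you want to \emph{fill in} the places of $S'\setminus S$ using the $\mathcal{X}_v$, landing over $\O_{K,S}$ itself (not over a possibly $X$-dependent $\O_{K,S'}$, which would not let you apply $(4)$ uniformly).

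\textbf{Closing the loop.} The paper actually proves $(4)\Leftrightarrow(5)$ and then $(4)\Rightarrow(1)$, rather than $(5)\Rightarrow(1)$; this is harmless. More substantively, the reduction from an arbitrary algebraically closed $k$ of characteristic~$0$ to $k=\overline{\Q}$ is not a bare Lefschetz-principle remark: the paper invokes \cite[Thm.~7.2]{JLitt}, which shows arithmetic hyperbolicity of a finite-type stack is independent of the choice of such $k$. Once over $\overline{\Q}$, the paper's argument is slightly cleaner than your ``union over twisted Hilbert schemes'': since $\H^1(\O_{K,S},\PGL_{n+2})$ is finite, a single finite extension $K'/K$ trivialises all the Brauer--Severi schemes arising from $\mathcal{C}_{(r;n)}(\O_{K,S})$, so the image of $\pi_0(\mathcal{C}_{(r;n)}(\O_{K,S}))$ in $\pi_0(\mathcal{C}_{(r;n)}(K'))$ consists of honest hypersurfaces in $\P^{n+1}_{K'}$ with good reduction over $\O_{K',S'}$, and $(4)$ for $(K',S')$ finishes.
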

\begin{proof}
$(1)\implies (2)$: This follows from the Twisting Lemma (Theorem \ref{thm:twists}), since $\mathcal{C}_{(r;n)}$ is separated over $\ZZ$ \cite[Thm.~1.7]{Ben13} with affine diagonal (by construction).   

$(2) \implies (3)$: Take $A = \O_{K,S}$.

$(3) \implies (4)$: This is immediate, as $(3)$ implies that there are only finitely many good models up to linear isomorphism.

$(4) \implies (5)$: Let $K$ be a number field with finite set of places $S$. To prove $(5)$, we are free to increase $S$. In particular, we may assume that $\Pic \O_{K,S} = 0$.

Let $X$ be a smooth hypersurface over $K$ with good models $\mathcal{X}_v$ over $\O_{K,v}$ for all $v \notin S$. By spreading out, there is a finite set of places $S \subset T$ and good model for $X$ over $\O_{K,T}$. Glueing this model with the models at $v \in T \setminus S$, we obtain a smooth proper scheme $f: \mathcal{X} \to \text{Spec } \O_{K,S}$ whose fibres are smooth hypersurfaces. Let $\O_{\mathcal{X}}(1)$ denote the line bundle determined by the closure in $\mathcal{X}$ of a hyperplane in $X$. Then by \cite[Lem.~1.1.8]{BenoistThesis}, this is relatively very ample, hence induces an embedding $\mathcal{X} \to \P(f_*(\O_{\mathcal{X}}(1)))$ into a projective bundle that is Zariski locally a hypersurface in projective space. But as $\Pic \O_{K,S} = 0$ and $\O_{K,S}$ is Dedekind, the locally free module  $f_*(\O_{\mathcal{X}}(1))$ is free. Thus $\mathcal{X}$ is in fact a good model for $X$, hence $(4) \implies (5)$.

$(5) \implies (4)$: Immediate, as a good model over $\O_{K,S}$ gives rise to a good model over all $\O_{K,v}$ for $v \notin S$.

	$(4) \implies (1)$: By \cite[Thm.~7.2]{JLitt}, it suffices to show that the stack $\mathcal{C}_{(r;n),\Qbar}$ is arithmetically hyperbolic over $\Qbar$. To do so, 
	let $K$ be a number field and $S$ a finite set of places of $K$.
	By Lemma \ref{descriptionofhyper}, an $\OO_{K,S}$-point
	of $\mathcal{C}_{(r;n)}$ 
	is given by a Brauer--Severi scheme $P$ over $\Spec \OO_{K,S}$
	of relative dimension $n+1$ together with a relative hypersurface
	$H \subset P$ of degree $r$. But such Brauer--Severi schemes are parametrised
	by $\H^1(\O_{K,S}, \PGL_{n+1})$, which is finite by Lemma \ref{lem:GMB}.
	Thus there exists a finite field extension 
	$K'/K$ which trivialises each such $P$. Let $S'$ be a finite set of places
	containing all places above $S$.
	Consider the diagram
	\[
	\xymatrix{ \mathcal{C}_{(r;n)}(\OO_{K,S}) \ar[d] \ar[r]  & \mathcal{C}_{(r;n)}(\OO_{K',S'}) \ar[d]  \\ 
	\mathcal{C}_{(r;n)}(K) \ar[r] &\mathcal{C}_{(r;n)}(K').  }
	\]  
	We have shown that elements in the image of the horizontal arrows
	may be represented by hypersurfaces in $\P^{n+1}_{\O_{K',S'}}$ and $\P^{n+1}_{K'}$, respectively.
	Applying $(4)$ to  $(K',S')$ now implies that
	$$\Im[ \pi_0(\mathcal{C}_{(r;n)}(\OO_{K,S}))~\to~\pi_0(\mathcal{C}_{(r;n)}(K'))]$$
	is finite, which proves (1).
\end{proof}

\begin{remark}
	Using Proposition \ref{prop:Rewriting}, we may slightly improve on some results in the literature. For example, in \cite[Cor.~1.3.2]{Andre} Andr\'e proves that there are only
	finitely many linear isomorphism classes of smooth quartic surfaces over $\Z[1/2m]$
	for any $m \in \Z$. Combining Proposition \ref{prop:Rewriting} with \cite[Thm.~1.3.1]{Andre}
	shows, for example, that there are only finitely many linear isomorphism classes of smooth 
	quartic surfaces over $\Z[1/m]$ for any $m \in \Z$, i.e.~inverting $2$ is not necessary.
\end{remark}

For completeness, we also verify Conjecture \ref{conji} for plane curves.

\begin{proposition}\label{prop:plane_curves} 
	Conjecture \ref{conji} holds for $n = 1$ and any $r \geq 2$.
\end{proposition}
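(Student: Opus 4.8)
The plan is to reduce Conjecture \ref{conji} for $n=1$ to Faltings's finiteness theorem for curves of genus $\geq 2$, treating the low-degree exceptional cases by hand. Fix a pair $(K,S)$ and consider a smooth plane curve $X \subset \mathbb{P}^2_{\mathcal{O}_{K,S}}$ of degree $r$. The genus of such a curve is $g = \binom{r-1}{2}$, so $g \geq 3$ as soon as $r \geq 4$, while $r=2$ gives $g=0$ and $r=3$ gives $g=1$. For $r \geq 4$: by \cite[Thm.~2.1]{JL} or the arguments behind Proposition \ref{prop:Rewriting}, the linear isomorphism classes of smooth plane curves of degree $r$ over $\mathcal{O}_{K,S}$ with good reduction outside $S$ inject (after possibly enlarging $S$ and passing to a finite extension, exactly as in the proof of $(4)\Rightarrow(1)$ above) into a set controlled by the curves themselves, since a smooth plane curve of degree $r \geq 4$ is canonically embedded — or at least, its embedding in $\mathbb{P}^2$ is recovered up to $\mathrm{PGL}_3$ from the abstract curve together with a $g^2_r$, and for $r\geq 4$ the plane model is intrinsic (the unique $g^2_{r}$ cut out by a multiple of the canonical class). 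Faltings's theorem (or rather its consequence for families with good reduction, see \cite{Faltings2}) gives finiteness of the abstract curves over $\mathcal{O}_{K,S}$, and finiteness of the plane embeddings then follows because there are only finitely many automorphisms and finitely many such linear systems.

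For $r = 2$: smooth conics in $\mathbb{P}^2_{\mathcal{O}_{K,S}}$ up to linear isomorphism correspond to isomorphism classes of such conics as $\mathcal{O}_{K,S}$-schemes, which are classified by a cohomology set $\mathrm{H}^1(\mathcal{O}_{K,S}, \mathrm{PGL}_3)$-type datum — more precisely, a smooth conic over $\mathcal{O}_{K,S}$ embedded in $\mathbb{P}^2$ by its anticanonical system is determined by its class as a Brauer–Severi scheme, and these are finite by Lemma \ref{lem:GMB}. For $r = 3$: a smooth plane cubic is a genus one curve, and the plane embedding is the complete linear system of a degree $3$ line bundle; one has to be careful because there is no base point, but after fixing the embedding one sees that such a cubic over $\mathcal{O}_{K,S}$ gives a smooth proper genus one scheme with a degree $3$ polarization. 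Its Jacobian is an elliptic curve over $\mathcal{O}_{K,S}$ with good reduction outside $S$, of which there are finitely many by Shafarevich's theorem for elliptic curves; and the genus one curve together with its polarization is a torsor under its Jacobian whose class lies in a finite subgroup of a Shafarevich–Tate-type group (degree $3$ torsors with everywhere good reduction), again finite by a Lemma \ref{lem:GMB}-style argument applied to the relevant torsor sheaf. Combining the three ranges of $r$ gives the result.

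The main obstacle I expect is the bookkeeping in the two exceptional cases $r \in \{2,3\}$, where "linear isomorphism class over $\mathcal{O}_{K,S}$" does not coincide with "isomorphism class of the abstract curve", so one genuinely needs the Brauer-group/torsor finiteness input (Lemma \ref{lem:GMB}) layered on top of the finiteness of the associated abelian variety. For $r\geq 4$ the only subtlety is checking that the plane model is recovered functorially from the abstract curve with good reduction — i.e.\ that a smooth model of the abstract curve over $\mathcal{O}_{K,S}$ together with the (intrinsically defined) line bundle $\mathcal{O}(1)$ re-embeds into $\mathbb{P}^2_{\mathcal{O}_{K,S}}$ after possibly enlarging $S$ so that the relevant Picard class is free — which is exactly the gluing-of-models argument already carried out in the proof of Proposition \ref{prop:Rewriting}, $(4)\Rightarrow(5)$.
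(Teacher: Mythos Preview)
Your approach is essentially the same as the paper's: in each degree range you reduce to Faltings/Shafarevich via the forgetful map from embedded plane curves to abstract curves (or their Jacobians), handling the extra ambiguity in low degree by torsor/cohomological finiteness. The paper packages this uniformly through the arithmetic-hyperbolicity framework of \S\ref{section:arithmetic_hyperbolicity}---for $r\geq 4$ it shows the forgetful morphism $\mathcal{C}_{(r;1),\bar{\Q}} \to \mathcal{M}_{g,\bar{\Q}}$ is quasi-finite (citing \cite{Chang}), for $r=3$ it shows $\mathcal{C}_{(3;1),\bar{\Q}} \to \mathcal{M}_{1,1,\bar{\Q}}$ is a gerbe under the universal $3$-torsion group scheme (citing \cite{Bergh}), then invokes Lemma~\ref{lem:qf} and Proposition~\ref{prop:Rewriting}---whereas you argue more directly with integral models and the gluing step from Proposition~\ref{prop:Rewriting}. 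The stacky packaging is tidier but the content is the same.

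One genuine imprecision to fix: for $r \geq 5$ the hyperplane class $\mathcal{O}(1)$ is \emph{not} ``cut out by a multiple of the canonical class'', since $K_X = \mathcal{O}(r-3)|_X$ and $r-3$ does not divide $1$. The statement you actually need---that a smooth plane curve of degree $r\geq 4$ carries a unique $g^2_r$, equivalently that two such curves are abstractly isomorphic iff linearly isomorphic---is still true, but it is a nontrivial theorem (precisely what the paper's citation of \cite{Chang} supplies) rather than canonical-class bookkeeping. Once you replace that parenthetical with the correct justification, your argument goes through.
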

\begin{proof}
	The case $r=2$ (plane conics) is a special case of \cite[Prop.~5.1]{JL}.
	For $r=3$ (plane cubics), consider the forgetful map $\mathcal{C}_{(3;1),\bar{\Q}}\to\mathcal{M}_{1,1,\bar{\Q}}$, where $\mathcal{M}_{1,1}$ is the stack of elliptic curves over $\mathbb{Z}$. By  \cite[Props.~6.1, 6.4]{Bergh},  this morphism is a gerbe under the universal $3$-torsion group scheme, and thus quasi-finite. As $\mathcal{M}_{1,1,\bar{\Q}}$ is arithmetically hyperbolic by Shafarevich \cite[Thm.~IX.6.1]{Silverman}, it follows from Lemma \ref{lem:qf} that  $\mathcal{C}_{(3;1),\bar{\Q}}$ is arithmetically hyperbolic. The result now follows from Proposition \ref{prop:Rewriting}.

		For $r\geq 4$. Let $g:=(d-1)(d-2)/2$ and let $\mathcal{M}_g$ be the stack of smooth proper connected curves of genus $g$ over $\ZZ$. By \cite{Chang}, two smooth plane curves in $\mathbb{P}^2_{\bar{\Q}}$ are birational if and only if they are linearly isomorphic. In particular,   the forgetful morphism $\mathcal{C}_{(d;1),\bar{\Q}}\to\mathcal{M}_{g,\bar{\Q}}$ is injective on isomorphism classes of $\bar{\Q}$-points and fully faithful. This implies that $\mathcal{C}_{(d;1),\bar{\Q}}\to \mathcal{M}_{g,\bar{\Q}}$ is  quasi-finite, so that the result again follows from  Faltings and Proposition \ref{prop:Rewriting}.  
	\end{proof}
	
Our method gives the following general result.

\begin{theorem} \label{thm:meta}
	Let $n \geq 1, r \geq 2$ and $d \geq 1$ with $rd \neq 2$. Let $k$ be an algebraically
	closed field of characteristic $0$ and  $A \subset k$  a $\Z$-finitely generated integrally closed subring.
	If	$\mathcal{C}_{(rd;n)}$ is arithmetically hyperbolic over $k$ 
	then $\pi_0(\mathcal{H}^{sm}(n+1,r,d)(A))$ is finite.
\end{theorem}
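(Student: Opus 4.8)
The plan is to deduce Theorem~\ref{thm:meta} by combining the étaleness of the branch morphism (Theorem~\ref{thm:properetale}), Chevalley--Weil, and the twisting lemma, exactly in the spirit of Corollary~\ref{cor:finite_dp}. First I would observe that $\mathcal{C}_{(rd;n)}$ is a moduli stack of divisorial pairs which is $1$-divisible trivially, and more to the point is $r$-divisible when we work over $\Z[1/r]$: indeed, for a hypersurface $D$ of degree $rd$ inside a Brauer--Severi scheme $P$ of dimension $n+1$, the line bundle $\mathcal{O}_P(D)$ has degree $rd$ on each geometric fibre, so over the algebraic closure $P \cong \mathbb{P}^{n+1}$ and $\mathcal{O}(D) \cong \mathcal{O}(rd)$ is divisible by $r$. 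Hence by Theorem~\ref{thm:properetale} applied with $\mathcal{X} = \mathcal{C}_{(rd;n), \Z[1/r]}$ the morphism
\[
(\mathcal{C}_{(rd;n),\Z[1/r]})_r \longrightarrow \mathcal{C}_{(rd;n),\Z[1/r]}
\]
is proper, étale and surjective. By Example~\ref{ex:cyclicprojectivespace}, the source is canonically identified with $\mathcal{H}^{sm}(n+1,r,d)_{\Z[1/r]}$, the identification sending $(P\to S, L, s:\mathcal{O}_P\to L^r)$ to $(P\to S, L^r, s)$. So we have a proper étale surjective morphism $\mathcal{H}^{sm}(n+1,r,d)_{\Z[1/r]} \to \mathcal{C}_{(rd;n),\Z[1/r]}$.

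Next I would run the arithmetic argument. Since $\mathcal{C}_{(rd;n)}$ is arithmetically hyperbolic over $k$ by hypothesis and $rd\neq 2$, the stack $\mathcal{C}_{(rd;n)}$ is a separated (hence finite-diagonal, being a finite type Deligne--Mumford stack once $rd\geq 3$) algebraic stack; one should be slightly careful if $rd$ were small, but the hypothesis $rd\neq 2$ together with $rd = 1$ being vacuous (no smooth hypersurface of degree $1$ issue — actually $rd=1$ gives hyperplanes which are fine) leaves exactly the relevant range. By Chevalley--Weil (Theorem~\ref{thm:chev_weil}), arithmetic hyperbolicity of $\mathcal{C}_{(rd;n),k}$ implies arithmetic hyperbolicity of $\mathcal{H}^{sm}(n+1,r,d)_k$. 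Moreover $\mathcal{H}^{sm}(n+1,r,d) \to \mathcal{C}_{(rd;n)}$ being proper quasi-finite and the target having finite diagonal, Lemma~\ref{lem:finitediagonal} gives that $\mathcal{H}^{sm}(n+1,r,d)$ has finite diagonal too (this also matches \cite[Remark~4.3]{ArsieVistoli}, valid since $rd\geq 3$). Now apply the twisting lemma (Theorem~\ref{thm:twists}): the stack $X = \mathcal{H}^{sm}(n+1,r,d)_k$ has finite diagonal and is arithmetically hyperbolic, so for every $\Z$-finitely generated integrally closed subring $A \subset k$ and every model with finite diagonal — in particular for the model $\mathcal{H}^{sm}(n+1,r,d)_A$ itself, which requires $r \in A^\times$ to even be defined over $A$...

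Here lies the one delicate point, and the main obstacle I anticipate: the statement of Theorem~\ref{thm:meta} as written takes an arbitrary $\Z$-finitely generated integrally closed $A\subset k$, with no assumption that $r$ is invertible in $A$, yet $\mathcal{H}^{sm}(n+1,r,d)$ is only well-behaved (étale over $\mathcal{C}$, Deligne--Mumford with finite diagonal over $\Z$ once $rd\geq 3$ by \cite[Remark~4.3]{ArsieVistoli}) in full generality — indeed $\mathcal{H}^{sm}(n+1,r,d)$ \emph{is} an algebraic stack with finite diagonal over $\Spec\Z$ whenever $rd\geq 3$, so no inversion of $r$ is actually needed for the twisting lemma input. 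The subtlety is only that the \emph{proper étale} comparison with $\mathcal{C}$ was established over $\Z[1/r]$. To handle $A$ with $r$ not invertible, I would factor: $\pi_0(\mathcal{H}^{sm}(n+1,r,d)(A)) \to \pi_0(\mathcal{H}^{sm}(n+1,r,d)(k))$, and it suffices to show the image is finite; since $\mathcal{H}^{sm}(n+1,r,d)_k$ is arithmetically hyperbolic over $k$ (proved above, as $k$ has characteristic $0$ so $r\in k^\times$ and Chevalley--Weil applies) and has finite diagonal over $\Z$, the twisting lemma directly yields $\pi_0(\mathcal{H}^{sm}(n+1,r,d)(A))$ finite for every such $A$ — no invertibility of $r$ in $A$ is required, because the twisting lemma only needs the model to have finite diagonal, which holds over $\Z$. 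Thus the plan concludes: arithmetic hyperbolicity of $\mathcal{C}_{(rd;n),k} \Rightarrow$ (Chevalley--Weil, using properness/étaleness over $\Z[1/r]\subset k$) arithmetic hyperbolicity of $\mathcal{H}^{sm}(n+1,r,d)_k \Rightarrow$ (twisting lemma, using finite diagonal over $\Z$) finiteness of $\pi_0(\mathcal{H}^{sm}(n+1,r,d)(A))$ for all $\Z$-finitely generated integrally closed $A\subset k$.
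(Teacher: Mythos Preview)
Your proposal is correct and follows essentially the same route as the paper: identify $\mathcal{H}^{sm}(n+1,r,d)_k$ with the universal stack of cyclic covers over $\mathcal{C}_{(rd;n),k}$ via Example~\ref{ex:cyclicprojectivespace}, use the proper \'etale morphism from Theorem~\ref{thm:properetale} to transfer arithmetic hyperbolicity, then apply the twisting lemma using the finite diagonal of $\mathcal{H}^{sm}(n+1,r,d)$ over $\Z$ (valid since $rd\geq 3$). The only cosmetic difference is that the paper invokes Lemma~\ref{lem:qf} (quasi-finite suffices) rather than the full Chevalley--Weil theorem, and your careful discussion of why $r\in A^\times$ is not needed is implicit in the paper's appeal to Example~\ref{defn:ars}.
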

\begin{proof}	
By Example \ref{ex:cyclicprojectivespace}, the stack $\mathcal{H}^{sm}(n+1,r,d)_k$ is isomorphic to the stack $\mathcal{X}_r$ from Theorem \ref{thm:properetale}. Thus the morphism $\mathcal{H}^{sm}(n+1,r,d)_k \to \mathcal{C}_{(rd;n),k}$ is proper \'etale, hence $\mathcal{H}^{sm}(n+1,r,d)_k$ is arithmetically hyperbolic by Lemma \ref{lem:qf}. As $\mathcal{H}^{sm}(n+1,r,d)$ has finite diagonal (Example \ref{defn:ars}), the result follows from Theorem \ref{thm:twists}. 
\end{proof}

\subsection{Proof of Theorem \ref{thm:Shaf_via_Fano}}

First note that Conjecture \ref{conji} is known for quadrics \cite[Prop.~5.1]{JL}, so we may assume $r \geq 3$. We will prove the result using the stacky Chevalley--Weil theorem and the cyclic covering trick.

	\begin{proposition}\label{prop:shaf_conjec} Let $k$ be an algebraically closed field of characteristic zero. 
		Let $r\geq 3$ and $N\geq 2$. Suppose that $\mathcal{C}_{(r;N),k}$ is arithmetically hyperbolic over $k$. Then, for all $1\leq n \leq N$, the stack $\mathcal{C}_{(r;n),k}$ is arithmetically hyperbolic over $k$.
	\end{proposition}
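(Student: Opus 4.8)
The plan is to prove this by a downward induction, reducing the case of $\mathcal{C}_{(r;n)}$ to that of $\mathcal{C}_{(r;n+1)}$ via the cyclic covering trick. It clearly suffices to show: if $\mathcal{C}_{(r;n+1),k}$ is arithmetically hyperbolic for some $n \geq 1$, then so is $\mathcal{C}_{(r;n),k}$. Indeed, iterating this statement starting from the hypothesis on $\mathcal{C}_{(r;N),k}$ gives arithmetic hyperbolicity of $\mathcal{C}_{(r;n),k}$ for all $1 \leq n \leq N$.

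For the inductive step, I would proceed as follows. First, since $k$ has characteristic zero, we may base change everything to $k$ (which is a $\Z[1/r]$-algebra). By Example \ref{ex:cyclicprojectivespace}, the stack of smooth degree-$r$ uniform cyclic covers $\mathcal{H}^{\textrm{sm}}(n+1,r,1)_k$ is a $\mu_r$-gerbe over $\mathcal{C}_{(r;n),k}$; in particular the structure morphism $\mathcal{H}^{\textrm{sm}}(n+1,r,1)_k \to \mathcal{C}_{(r;n),k}$ is proper \'etale and surjective. By the stacky Chevalley--Weil theorem (Theorem \ref{thm:chev_weil}), it therefore suffices to prove that $\mathcal{H}^{\textrm{sm}}(n+1,r,1)_k$ is arithmetically hyperbolic. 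Now invoke the ``cyclic covers back to hypersurfaces'' morphism constructed in \S\ref{section:pres}, namely
\[
\mathcal{H}^{\textrm{sm}}(n+1,r,1)_{k} \longrightarrow \mathcal{C}_{(r;n+1),k},
\]
which sends a cyclic cover $Z \to \P^{n+1}$ branched along $V(f)$ to the hypersurface $V(x_{n+2}^r - f) \subset \P^{n+2}$. By Proposition \ref{unramifiedmapofstacks} this morphism is unramified; since both stacks are of finite type over $k$, it is in particular quasi-finite (unramified plus finite type between finite-type stacks gives locally quasi-finite of finite type). By hypothesis $\mathcal{C}_{(r;n+1),k}$ is arithmetically hyperbolic, so Lemma \ref{lem:qf} yields that $\mathcal{H}^{\textrm{sm}}(n+1,r,1)_k$ is arithmetically hyperbolic, completing the inductive step.

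One subtlety I should address carefully is the base field versus $\Z[1/r]$: the morphisms in \S\ref{section:pres} are defined over $\Z[1/r]$, and since $\mathrm{char}\, k = 0$ we have $r \in k^\times$, so base-changing to $k$ is legitimate and all the cited properties (proper \'etale gerbe, unramifiedness) are preserved under this base change. I should also double-check that $\mathcal{H}^{\textrm{sm}}(n+1,r,1)$ is non-empty so that the Chevalley--Weil direction has content (it is, as $x_{n+1}^r = f$ gives explicit cyclic covers over $k$); and that ``quasi-finite'' in the sense used by Lemma \ref{lem:qf} follows from ``unramified and of finite type'' — the paper's convention defines quasi-finite as finite type plus locally quasi-finite, and unramified morphisms are locally quasi-finite, so this is fine.

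The main obstacle is really already packaged into the two black boxes being invoked: the fact that the branch-divisor morphism is a $\mu_r$-gerbe (hence proper \'etale surjective), which is Corollary \ref{cor:gerbenotorsion} applied in Example \ref{ex:cyclicprojectivespace}, and the unramifiedness of the cover-to-hypersurface morphism (Proposition \ref{unramifiedmapofstacks}), whose proof rests on the deformation-theoretic input that $T_{\P^n}\langle H\rangle$ is a direct summand of $f_*T_Z$. Given those, the argument here is a short formal chain: Chevalley--Weil up the gerbe, then quasi-finiteness down to $\mathcal{C}_{(r;n+1)}$, then Lemma \ref{lem:qf}. So in writing the proof I would keep it brief, cite Example \ref{ex:cyclicprojectivespace}, Theorem \ref{thm:chev_weil}, Proposition \ref{unramifiedmapofstacks}, and Lemma \ref{lem:qf}, and simply make the downward induction explicit.
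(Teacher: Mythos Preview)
Your proposal is correct and follows essentially the same approach as the paper: both argue by downward induction, using that $\mathcal{H}^{\textrm{sm}}(n+1,r,1)_k$ sits as a $\mu_r$-gerbe (hence proper \'etale surjective) over $\mathcal{C}_{(r;n),k}$ and admits a quasi-finite (unramified) map to $\mathcal{C}_{(r;n+1),k}$, then combine Lemma~\ref{lem:qf} with the stacky Chevalley--Weil theorem. The only differences are cosmetic---your indexing uses $n+1$ where the paper uses $N$, and you invoke Chevalley--Weil before Lemma~\ref{lem:qf} rather than after.
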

	\begin{proof}  By induction,   it suffices to show that $\mathcal{C}_{(r;N-1),k}$ is arithmetically hyperbolic over $k$.
		To do so,  consider the $\mu_r$-gerbe  from Example \ref{ex:cyclicprojectivespace} and the unramified   (hence quasi-finite) map from
		Proposition \ref{unramifiedmapofstacks}.
 In a diagram: 
		\[
		\xymatrix{ \mathcal{H}^{\textrm{sm}}(N,r,1)_{k} \ar[d]_{\textrm{proper \'etale}} \ar[rr]^{\,\,\,\,\,\,\,\,\textrm{quasi-finite}}   & & \mathcal{C}_{(r;N),k} \\  \mathcal{C}_{(r;N-1),k}.&  &  }
		\]  
		Since $\mathcal C_{(r;N),k}$ is arithmetically hyperbolic over $k$, it follows that the stack $\mathcal{H}^{\textrm{sm}}(N,r,1)_k$  is arithmetically hyperbolic (Lemma \ref{lem:qf}). Then the stacky Chevalley--Weil (Theorem \ref{thm:chev_weil}) shows that $\mathcal C_{(r;N-1),k}$ is arithmetically hyperbolic  over $k$.
	\end{proof}

We now prove Theorem \ref{thm:Shaf_via_Fano}. The stack $\mathcal{C}_{(r;N),{\bar{\Q}}}$ is arithmetically hyperbolic by Proposition \ref{prop:Rewriting} and our assumptions.   Proposition \ref{prop:shaf_conjec} implies that $\mathcal{C}_{(r;n),\bar{\Q}}$ is arithmetically hyperbolic for all $1\leq n \leq N$, so that Theorem \ref{thm:Shaf_via_Fano}  follows readily from Proposition \ref{prop:Rewriting}.	
\qed

\section{Covers of the plane}

We specialise Theorem \ref{thm:meta} to covers of the plane, to deduce new (and old) finiteness statements about explicit families of surfaces.
Although each of the stacks $\mathcal{H}(2,r,d)$ parametrize surfaces, they also admit a proper \'etale map to moduli stacks parametrizing curves. Thus, we are able to deduce finiteness results from  curves.
 
\begin{theorem} \label{thm:H2rd} 
	Let $r \geq 2$ and $d \geq 1$ with $rd \neq 2$. Let $k$ be an algebraically
	closed field of characteristic $0$ and  $A \subset k$  a $\Z$-finitely generated integrally closed subring. Then $\pi_0(\mathcal{H}^{sm}(2,r,d)(A))$ is finite.
\end{theorem}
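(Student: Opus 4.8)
The plan is to apply Theorem \ref{thm:meta} in the case $n=1$, which reduces the statement to proving that the stack $\mathcal{C}_{(rd;1),k}$ of smooth plane curves of degree $rd$ is arithmetically hyperbolic over $k$. Since $k$ is algebraically closed of characteristic $0$ and arithmetic hyperbolicity only depends on the base through its algebraic closure (via \cite[Thm.~7.2]{JLitt}, as used in the proof of Proposition \ref{prop:Rewriting}), we may assume $k = \bar{\Q}$; alternatively, one checks directly that the argument of Proposition \ref{prop:plane_curves} goes through verbatim over any algebraically closed field of characteristic $0$ since it rests only on Faltings and facts about $\mathcal{M}_g$.

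The core input is therefore exactly Proposition \ref{prop:plane_curves}, or rather its proof: for $rd \geq 4$ one uses that two smooth plane curves are birational if and only if linearly isomorphic \cite{Chang}, so the forgetful morphism $\mathcal{C}_{(rd;1),\bar{\Q}} \to \mathcal{M}_{g,\bar{\Q}}$ with $g = (rd-1)(rd-2)/2$ is fully faithful and injective on isomorphism classes of $\bar{\Q}$-points, hence quasi-finite; since $\mathcal{M}_{g}$ is arithmetically hyperbolic by Faltings, Lemma \ref{lem:qf} gives arithmetic hyperbolicity of $\mathcal{C}_{(rd;1),\bar{\Q}}$. For $rd = 3$ one instead uses the gerbe structure of $\mathcal{C}_{(3;1),\bar{\Q}} \to \mathcal{M}_{1,1,\bar{\Q}}$ under the universal $3$-torsion group scheme (\cite[Props.~6.1, 6.4]{Bergh}), which is again quasi-finite, and Shafarevich's theorem for elliptic curves. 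The condition $rd \neq 2$ in the hypothesis is precisely what excludes the remaining degenerate case of conics, where the moduli stack is not arithmetically hyperbolic. Feeding this into Theorem \ref{thm:meta} with $n=1$ yields that $\pi_0(\mathcal{H}^{sm}(2,r,d)(A))$ is finite for every $\Z$-finitely generated integrally closed $A \subset k$.

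I do not expect a genuine obstacle here: the theorem is essentially a corollary obtained by specialising Theorem \ref{thm:meta} to relative dimension one and invoking the already-established arithmetic hyperbolicity of moduli of plane curves. The only point requiring a little care is that Theorem \ref{thm:meta} is phrased for $\mathcal{C}_{(rd;n)}$ over an algebraically closed field $k$ while Proposition \ref{prop:plane_curves} is phrased over $\bar{\Q}$; this is bridged by the descent result \cite[Thm.~7.2]{JLitt} already used in the excerpt, so no new ideas are needed. One should also double-check that the hypothesis $rd \neq 2$ of Theorem \ref{thm:meta} matches the hypothesis of the present theorem, which it does.

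\begin{proof}
By Theorem \ref{thm:meta} applied with $n = 1$, it suffices to prove that $\mathcal{C}_{(rd;1),k}$ is arithmetically hyperbolic over $k$. Since arithmetic hyperbolicity of a finite type stack over an algebraically closed field of characteristic $0$ only depends on the stack up to base change along extensions of algebraically closed fields (see \cite[Thm.~7.2]{JLitt}, as applied in the proof of Proposition \ref{prop:Rewriting}), we may assume $k = \bar{\Q}$. As $rd \neq 2$, the proof of Proposition \ref{prop:plane_curves} shows that $\mathcal{C}_{(rd;1),\bar{\Q}}$ is arithmetically hyperbolic over $\bar{\Q}$: for $rd = 3$ this uses the quasi-finite gerbe $\mathcal{C}_{(3;1),\bar{\Q}} \to \mathcal{M}_{1,1,\bar{\Q}}$ together with Shafarevich's theorem and Lemma \ref{lem:qf}, while for $rd \geq 4$ one uses that the forgetful morphism $\mathcal{C}_{(rd;1),\bar{\Q}} \to \mathcal{M}_{g,\bar{\Q}}$ with $g = (rd-1)(rd-2)/2$ is quasi-finite by \cite{Chang}, so that arithmetic hyperbolicity follows from Faltings and Lemma \ref{lem:qf}. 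Hence $\pi_0(\mathcal{H}^{sm}(2,r,d)(A))$ is finite.
\end{proof}
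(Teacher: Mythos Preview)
Your proof is correct and follows the same route as the paper: apply Theorem \ref{thm:meta} with $n=1$, and supply the needed arithmetic hyperbolicity of $\mathcal{C}_{(rd;1),k}$ via Proposition \ref{prop:plane_curves} together with the passage from $\bar{\Q}$ to arbitrary $k$. The paper packages the latter step by citing Propositions \ref{prop:Rewriting} and \ref{prop:plane_curves} together, whereas you unpack this by invoking \cite[Thm.~7.2]{JLitt} directly and rerunning the argument of Proposition \ref{prop:plane_curves}; the content is the same.
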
 
\begin{proof}
	Here $\mathcal{C}_{(rd;1)}$ is arithmetically hyperbolic over $k$ by Propositions \ref{prop:Rewriting} and \ref{prop:plane_curves}. The result therefore follows from
	Theorem \ref{thm:meta}.
\end{proof}

\subsection{Double covers}

This result is especially interesting when $r=2$ because all double covers are automatically $\mu_2$-cyclic covers.

\begin{lemma} \label{lem:doublecovers} Let $S$ be a scheme with $2 \in \mathcal{O}_S^{\times}$. Let $f: X \to Y$ be a morphism of smooth finitely presented schemes over $S$ which is a degree $2$ finite flat morphism. Then $X \to Y$ has a unique structure of a relative uniform cyclic cover of $Y$, up to $S$-isomorphism.
\end{lemma}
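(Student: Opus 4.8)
The plan is to reduce the statement to the local structure of degree~$2$ covers in characteristic~$\neq 2$. First I would observe that since $2 \in \mathcal{O}_S^\times$ and $f$ is finite flat of degree $2$, the sheaf $f_*\mathcal{O}_X$ is locally free of rank $2$ over $\mathcal{O}_Y$, and the trace map gives a canonical splitting $f_*\mathcal{O}_X = \mathcal{O}_Y \oplus \mathcal{L}$ where $\mathcal{L}$ is the trace-zero subsheaf; here is where invertibility of $2$ is essential. Since $f$ is finite flat of degree $2$ and both $X$ and $Y$ are smooth, $\mathcal{L}$ is an invertible $\mathcal{O}_Y$-module. Multiplication in $f_*\mathcal{O}_X$ restricts to a map $\mathcal{L}^{\otimes 2} \to \mathcal{O}_Y$, which is exactly the data of a cyclic structure by Remark~\ref{rem:alternatecyclic} (or \cite[Prop.~2.2]{ArsieVistoli}), provided the resulting section is nowhere a zero divisor on fibres. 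The $\mu_2$-action is then recovered from the $\mathbb{Z}/2\mathbb{Z}$-grading $\mathcal{O}_Y \oplus \mathcal{L}$, and one must check this action together with the local model $\Spec R[x]/(x^2 - h)$ matches Definition~\ref{def:cyclic_cover}.

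The key steps, in order, would be: (i) construct $\mathcal{L}$ and the map $\mathcal{L}^{\otimes 2} \to \mathcal{O}_Y$ as above, working locally where $f_*\mathcal{O}_X = \mathcal{O}_Y \oplus \mathcal{O}_Y\cdot t$ with $t$ trace-zero, so $t^2 = h \in \mathcal{O}_Y$ and $f^{-1}(\Spec R) \cong \Spec R[x]/(x^2 - h)$; (ii) verify $h$ is a non-zero-divisor on $Y$ and remains so on every fibre $Y_s$ — this uses that $X$ is $S$-flat (so the branch divisor, cut out by $h$, is $S$-flat by \cite[Tag 062Y]{stacks-project}) together with smoothness of $X$ and $Y$, which forces $h$ to cut out a reduced Cartier divisor rather than vanish identically on a component; (iii) conclude that $(\mathcal{L}, \mathcal{L}^{\otimes 2} \to \mathcal{O}_Y)$ defines a relative uniform cyclic cover of degree $2$ whose underlying $Y$-scheme is $X$; (iv) prove uniqueness by showing any $\mu_2$-action on $X/Y$ realizing it as a cyclic cover induces the same $\mathbb{Z}/2\mathbb{Z}$-grading on $f_*\mathcal{O}_X$, because the nontrivial character-eigensheaf must be the trace-zero part (again using $2$ invertible, so the two $\mu_2$-eigenspaces are the $\pm 1$-eigenspaces of the involution, and the $-1$-eigenspace is exactly the trace-zero submodule).

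The main obstacle I expect is step~(ii): ensuring that the section $\mathcal{L}^{\otimes 2} \to \mathcal{O}_Y$ obtained from multiplication genuinely cuts out an $S$-flat divisor and is fibrewise a non-zero-divisor, i.e.\ that $h$ does not vanish on a whole fibre. Without smoothness of $X$ this could fail (e.g.\ $X$ could be two disjoint copies of $Y$, where $h = 0$); smoothness of $X$ together with finite flatness of degree $2$ rules this out fibrewise, since over a field a degree-$2$ finite flat cover of a smooth connected scheme with smooth total space is either connected (and then ramified along a smooth divisor, so $h \neq 0$) or \'etale (and then $h \in \mathcal{O}_Y^\times$). One has to spell out this fibrewise dichotomy carefully and then conclude $S$-flatness of $V(h)$ from $S$-flatness of $X$ and the local description. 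Once that is in place, the rest is essentially unwinding the equivalence of Remark~\ref{rem:alternatecyclic}.

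\begin{proof}[Proof sketch]
Since $2 \in \mathcal{O}_S^\times$ and $f\colon X \to Y$ is finite flat of degree $2$, the $\mathcal{O}_Y$-algebra $\mathcal{A} := f_*\mathcal{O}_X$ is locally free of rank $2$, and the trace form splits it canonically as $\mathcal{A} = \mathcal{O}_Y \oplus \mathcal{L}$, where $\mathcal{L}$ is the trace-zero subsheaf; as $\mathcal{A}$ has rank $2$, $\mathcal{L}$ is invertible. Multiplication in $\mathcal{A}$ restricts to an $\mathcal{O}_Y$-linear map $m\colon \mathcal{L}^{\otimes 2} \to \mathcal{O}_Y$, and the $\mathbb{Z}/2\mathbb{Z}$-grading $\mathcal{O}_Y \oplus \mathcal{L}$ together with $m$ is precisely the datum of Remark~\ref{rem:alternatecyclic}; the associated $\mu_2$-action on $X$ over $Y$ is the one induced by the grading. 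Locally on $Y = \Spec R$, writing $\mathcal{L} = R\cdot t$ with $t$ trace-zero, we have $t^2 = h \in R$ and $f^{-1}(\Spec R) = \Spec R[x]/(x^2 - h)$ compatibly with the $\mu_2$-action, so $f$ has the shape required by Definition~\ref{def:cyclic_cover} once we know $h$ is a non-zero-divisor.

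To see $h$ is a non-zero-divisor and remains so on every fibre $Y_s$: since $X$ is $S$-flat and $X \to S$ factors through $Y$, we may check the fibrewise statement, so assume $S = \Spec K$ for a field $K$ of characteristic $\neq 2$ and $Y$ smooth connected. Then $X \to Y$ is finite flat of degree $2$ with $X$ smooth. If $h \in \mathcal{O}_Y^\times$ everywhere then $f$ is \'etale and $h$ is a unit, hence trivially a non-zero-divisor. Otherwise $V(h)$ is a nonempty Cartier divisor; since $X$ is smooth and $X = \Spec_Y R[x]/(x^2-h)$ locally, the Jacobian criterion (computing $\partial_x(x^2-h) = 2x$ and $\partial_{y_i}(x^2-h) = -\partial_{y_i} h$, with $2$ invertible, exactly as in Proposition~\ref{rem:smoothness}) shows $V(h) \subset Y$ is smooth; in particular $h$ does not vanish on any component of the connected smooth scheme $Y$, so $h$ is a non-zero-divisor. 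Hence $V(h)$ is a Cartier divisor flat over $S$ by \cite[Tag 062Y]{stacks-project}, and $(f, \mu_2)$ is a relative uniform cyclic cover of degree $2$ with underlying $Y$-scheme $X$.

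For uniqueness, suppose $\mu_2$ acts on $X$ over $Y$ exhibiting $f$ as a relative uniform cyclic cover. This action induces a $\mathbb{Z}/2\mathbb{Z}$-grading $\mathcal{A} = \mathcal{A}_0 \oplus \mathcal{A}_1$ on $\mathcal{A} = f_*\mathcal{O}_X$, with $\mathcal{A}_0 = \mathcal{O}_Y$ the invariants and $\mathcal{A}_1$ the sign-eigensheaf. Because $2$ is invertible, these eigensheaves are the $\pm 1$-eigenspaces of the associated involution $\iota$ of $\mathcal{A}$, and for any local section $a$ one has $a + \iota(a) = \mathrm{tr}(a)$; thus $\mathcal{A}_1 = \ker(\mathrm{tr})$ is the trace-zero submodule $\mathcal{L}$ constructed above. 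Consequently the graded algebra structure, and hence the cyclic-cover datum $(\mathcal{L}, m)$ and the $\mu_2$-action, coincide with the ones above up to the isomorphism of Remark~\ref{rem:alternatecyclic}. Therefore the relative uniform cyclic cover structure on $X \to Y$ is unique up to $S$-isomorphism.
\end{proof}
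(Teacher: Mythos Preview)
Your proof is correct and follows essentially the same strategy as the paper: split $f_*\mathcal{O}_X$ using the trace (the paper phrases this as ``half the trace gives a left splitting'' and then completes the square, whereas you work directly with the trace-zero summand, which is marginally cleaner), obtain the local form $R[x]/(x^2-h)$, and check that $V(h)$ is a relative Cartier divisor. The only notable differences are in the auxiliary verifications: for uniqueness, the paper argues via $\underline{\mathrm{Aut}}_{X/Y}$ (using density of the \'etale locus to pin down the involution), while you identify the $(-1)$-eigenspace with $\ker(\mathrm{tr})$ directly; and for the branch divisor, the paper invokes purity of the branch locus, while you rerun the Jacobian computation of Proposition~\ref{rem:smoothness}. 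Both routes are valid and of comparable length.
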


\begin{proof}  
To prove the lemma,    we first show that $X$ admits at most one non-trivial action of $\mu_2$ making $f$ $\mu_2$-invariant. 
When $f$ is \'etale this follows because any unramified double covering is Galois with $\underline{\mathrm{Aut}}_{X/Y} =\mu_2$, and therefore only one action is possible. If $f$ is not \'etale, as $2$ is invertible and $X$ and $Y$ are $S$-smooth, there is an open subset $U \subset Y$ (dense in every fibre $Y_s$) over which $f$ is \'etale, so any automorphism $X \to X$ over $Y$ is unique and an involution. This immediately implies that if $X \to Y$ admits the structure of a cyclic cover, then it is unique.

To show that $X \to Y$ admits such a structure, we show that it admits a $\mu_2$-action as in Definition \ref{def:cyclic_cover}. By uniqueness, it suffices to find the involution locally, so we may pass to an affine covering of $Y$ and so we set $Y=\Spec A$. Next we show that $X \simeq \Spec A[x]/(x^2-a)$ for some non-zero divisor $a \in A$. This will prove the existence of the desired $\mu_2$-action on $X$ since the right hand side has the obvious involution $x \mapsto -x$ over $A$.

To obtain such an isomorphism, observe that half the trace gives a left splitting of  the short exact sequence 
\[0 \to \mathcal{O}_Y \to f_*\mathcal{O}_X \to L \to 0,\]
where $L$ is a line bundle. By passing to a further covering of $\Spec A$ we may assume $L$ has a nowhere vanishing section $b'$, so that $f_*\mathcal{O}_X=\mathcal{O}_Y\oplus\mathcal{O}_Yb'$ (as modules). To determine the $\mathcal{O}_Y$-algebra structure of $f_*\mathcal{O}_X$ note that, for some $a', a'' \in A$, we have
\[b'^2+a''b'-a'=0\]
Set $b=b'-\frac{a''}{2}$ so $b^2-a=0$ for some $a \in A$, $f_*\mathcal{O}_X=\mathcal{O}_Y \oplus \mathcal{O}_Yb$,  and therefore $f_*\mathcal{O}_X \simeq A[x]/(x^2-a)$,  as desired.  
Let $Z = V(a) \subset Y$. To see that this gives $f$ the structure of a relative uniform cyclic cover, we need to check that $Z$ is a relative Cartier divisor. However, $Z$ is the branch locus of $f$ and is therefore Cartier on every fibre $Y_s$ by purity of the branch locus and the $S$-smoothness of $X$ and $Y$. Then $Z$ is Cartier and $S$-flat by \cite[Tag 062Y]{stacks-project}.  
\end{proof}

\begin{remark} \label{rem:doublecovers} 
It follows that any finite flat map of degree two (of finite presentation) $f: X \to Y$ between $S$-smooth schemes has the unique structure of a relative uniform cyclic cover. In particular, this implies that if $f':X' \to Y$ is another such morphism and $X \to X'$ is map over $Y$ then it is automatically a map of relative uniform cyclic covers. We will use this fact repeatedly to show that certain moduli stacks of relative uniform cyclic covers of degree $2$ have alternative descriptions.  
\end{remark}
\subsection{Special cases}

We now explain how to recover versions of the Shafarevich conjecture for del Pezzo surfaces and K3 surfaces of degree $2$.

\begin{definition}  Let $\mathcal{D}_2$ denote the category fibred in groupoids of \emph{Del Pezzo surfaces of degree $2$}. That is, the groupoid over a scheme $T$ consists of proper smooth families of finite presentation $X \to T$ whose geometric fibres are Del Pezzo surfaces of degree $2$ and morphisms are isomorphisms over $T$. \end{definition}
 
\begin{proposition} \label{prop:delpezzos} There is a natural equivalence  $\mathcal{D}_{2, \mathbb{Z}[1/2]} \simeq \mathcal{H}^{\text{sm}}(2,2,2)_{\mathbb{Z}[1/2]}$.
\end{proposition}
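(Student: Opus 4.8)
The plan is to exhibit mutually inverse functors between $\mathcal{D}_{2,\mathbb{Z}[1/2]}$ and $\mathcal{H}^{\text{sm}}(2,2,2)_{\mathbb{Z}[1/2]}$ using the classical description of a degree-$2$ del Pezzo surface as the anticanonical double cover of $\mathbb{P}^2$ branched along a smooth plane quartic. Recall from Example \ref{defn:ars} that $\mathcal{H}^{\text{sm}}(2,2,2)$ parametrizes relative uniform cyclic covers of degree $2$ of a relative Brauer--Severi surface, branched along a smooth divisor of degree $2\cdot 2 = 4$; by Example \ref{ex:branch_degree} the associated line bundle $\mathcal{L}$ has degree $-2$ on the fibres, so $\mathcal{L}^{-1}$ is fibrewise $\O_{\mathbb{P}^2}(2)$ and the branch divisor is fibrewise a plane quartic. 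First I would construct the functor $\mathcal{H}^{\text{sm}}(2,2,2)_{\mathbb{Z}[1/2]}\to \mathcal{D}_{2,\mathbb{Z}[1/2]}$: given $(P\to T, X\to P)$ in $\mathcal{H}^{\text{sm}}(2,2,2)(T)$, one sends it to the structure morphism $X\to T$. One must check that $X\to T$ is smooth and proper of finite presentation with geometric fibres del Pezzo surfaces of degree $2$. Smoothness over $T$ follows from Proposition \ref{rem:smoothness} (since $2$ is invertible and the branch divisor is $T$-smooth), properness is clear since $X\to P\to T$ is a composite of proper morphisms, and the del Pezzo claim is a fibrewise statement: over an algebraically closed field a double cover of a Brauer--Severi surface (necessarily $\mathbb{P}^2$) branched along a smooth quartic has $-K_X$ given by the pullback of $\O_{\mathbb{P}^2}(1)$, which is ample with $K_X^2 = 2$, and such surfaces are exactly the degree-$2$ del Pezzos.

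Next I would construct the inverse functor $\mathcal{D}_{2,\mathbb{Z}[1/2]}\to \mathcal{H}^{\text{sm}}(2,2,2)_{\mathbb{Z}[1/2]}$. Given $X\to T$ a family of degree-$2$ del Pezzos, form the line bundle $\omega_{X/T}^{-1}$, which is relatively ample; the formation of $\pi_*\omega_{X/T}^{-1}$ commutes with base change and is locally free of rank $3$ (this is the standard vanishing $\H^i(S,-K_S)=0$ for $i>0$, $h^0(-K_S)=3$ for a degree-$2$ del Pezzo, together with cohomology and base change). Let $P = \mathbb{P}(\pi_*\omega_{X/T}^{-1})\to T$, a relative Brauer--Severi surface. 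The evaluation map $\pi^*\pi_*\omega_{X/T}^{-1}\to \omega_{X/T}^{-1}$ is surjective (the anticanonical system of a degree-$2$ del Pezzo is base-point free), inducing the standard $2:1$ morphism $f\colon X\to P$; the key input is that this $f$ is finite flat of degree $2$, which again can be checked fibrewise. By Lemma \ref{lem:doublecovers} (using $2\in\O_T^\times$ and smoothness of $X$ and $P$), $f$ carries a unique structure of relative uniform cyclic cover, whose branch divisor $D\subset P$ is $T$-smooth (again by Proposition \ref{rem:smoothness}, or by purity of the branch locus as in the proof of Lemma \ref{lem:doublecovers}) of fibrewise degree $4$. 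This produces an object of $\mathcal{H}^{\text{sm}}(2,2,2)(T)$, and the construction is manifestly functorial and compatible with base change.

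Finally I would check that the two functors are mutually quasi-inverse. For one composite: starting from a cyclic cover $X\to P\to T$, the double cover recovers $\omega_{X/T}^{-1} = f^*\O_P(1)$ (a fibrewise computation with the ramification formula, $K_X = f^*(K_P + \tfrac12 D)$ and $K_P + \tfrac12 D$ fibrewise equal to $\O_{\mathbb{P}^2}(-3) + \O_{\mathbb{P}^2}(2) = \O_{\mathbb{P}^2}(-1)$), so $\pi_*\omega_{X/T}^{-1}$ reconstructs $u_*\O_P(1)$ and hence $P$ together with $f$; the uniqueness clause of Lemma \ref{lem:doublecovers} (see Remark \ref{rem:doublecovers}) then identifies the two cyclic cover structures canonically. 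For the other composite, starting from $X\to T$ the recipe literally returns $X\to T$ with $P$ its anticanonical image. I expect the main obstacle to be the bookkeeping required to see that all constructions are canonical enough to upgrade to an equivalence of categories fibred in groupoids (matching morphisms, not just objects) and compatible with arbitrary base change — in particular verifying that $\pi_*\omega_{X/T}^{-1}$ commutes with base change uniformly, and invoking Remark \ref{rem:doublecovers} to ensure that any isomorphism of del Pezzo families automatically induces an isomorphism of the associated cyclic covers. The geometric inputs themselves (anticanonical models of del Pezzos of degree $2$) are entirely classical.
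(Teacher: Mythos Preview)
Your proposal is correct and follows essentially the same approach as the paper: build the functor $\mathcal{D}_{2,\mathbb{Z}[1/2]}\to\mathcal{H}^{\text{sm}}(2,2,2)_{\mathbb{Z}[1/2]}$ via the anticanonical model $X\to\mathbb{P}(\pi_*\omega_{X/T}^{-1})$ together with Lemma~\ref{lem:doublecovers}, take the forgetful functor in the other direction, and check they are mutually inverse. The paper's proof is a brief sketch of exactly this argument, leaving the verification that $F\circ G\simeq\mathrm{id}$ to the reader; your write-up simply fills in the details (the ramification-formula computation of $\omega_{X/T}^{-1}$, the cohomology-and-base-change input, and the appeal to Remark~\ref{rem:doublecovers}) that the paper omits.
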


\begin{proof} This is well-known so we give a sketch of the argument. Given a proper smooth family of Del Pezzo's of degree two $f: X \to T$, cohomology and base change implies that there exists a double cover $\pi: X \to \mathbb{P}(f_*\omega_{X/T}^{\vee}) \to T$ which is ramified along a family of quartic curves. Moreover, $\pi$ has the structure of a relative uniform cyclic cover in a unique way (see Lemma \ref{lem:doublecovers}). In particular, a morphism of objects in $\mathcal{D}_2$ induces an equivariant morphism of relative uniform cyclic covers. Thus, there is a functor $F: \mathcal{D}_2 \to \mathcal{H}^{\text{sm}}(2,2,2)$. On the other hand, the forgetful morphism $G: \mathcal{H}^{\text{sm}}(2,2,2) \to \mathcal{D}_2$ which sends
\[X \to P \to S \quad \mapsto \quad X \to S\]
is inverse to $F$. That $G \circ F=\text{id}$ is clear and  $F \circ G \simeq \text{id}$ is left to the reader.
\end{proof}

Recall from Example \ref{ex:k3s} that $\mathcal{F}_{2}$ denotes the category of \emph{degree two polarized K3 surfaces}. 

\begin{proposition} \label{prop:k3s} There is a natural equivalence $\mathcal{F}_{2, \mathbb{Z}[1/2]} \cong \mathcal{H}^{\text{sm}}(2,2,3)_{\mathbb{Z}[1/2]}$. 
\end{proposition}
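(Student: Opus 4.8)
The plan is to mimic the proof of Proposition \ref{prop:delpezzos}, replacing the anticanonical system of a del Pezzo surface by the polarization $\lambda$ of a degree two K3 surface. The key geometric input is that for a polarized K3 surface $(X \to T, \lambda)$ of degree two, the polarization induces a double cover to a relative Brauer--Severi scheme of dimension two. Concretely, one first checks that $\lambda$ is globally generated on geometric fibres: a degree two polarization $L$ on a complex K3 surface $X$ with $L^2 = 2$ is base-point free, and the associated morphism $X \to \mathbb{P}^2$ is finite of degree two onto $\mathbb{P}^2$, ramified along a smooth sextic curve (this is classical; see Saint-Donat, or \cite[\S 4]{Rizov}). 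The hypersurface is of degree $6 = 2 \cdot 3$, which explains the index $d = 3$ in $\mathcal{H}^{\mathrm{sm}}(2,2,3)$.

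First I would verify that $v_*(\lambda^{\otimes i})$ (equivalently $v_*L^{\otimes i}$ after passing to the $\mathbb{G}_m$-gerbe $\mathcal{X}^{\mathcal{L}} \to \mathcal{F}_2$) is locally free of the expected rank, compatible with arbitrary base change, and that $v^*v_*L \to L$ is surjective; all of this follows from cohomology and base change together with the vanishing $\mathrm{H}^1(X_t, \mathcal{O}_{X_t}) = \mathrm{H}^1(X_t, L_t) = 0$ on K3 fibres and the classical base-point-freeness statement on fibres. Then Proposition \ref{prop:factorBS} (applied with $r = 2$) produces a canonical factorization $\mathcal{V} \to \mathcal{P} \to \mathcal{F}_2$ with $\mathcal{P} \to \mathcal{F}_2$ a relative Brauer--Severi surface, and the induced finite morphism $\mathcal{V} \to \mathcal{P}$ is finite flat of degree two since it is so on fibres and everything is flat over $\mathcal{F}_2$. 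By Lemma \ref{lem:doublecovers} (using $2 \in \mathcal{O}_S^\times$, i.e.\ working over $\mathbb{Z}[1/2]$) this double cover carries a unique structure of relative uniform cyclic cover of degree two, and by Proposition \ref{rem:smoothness} the branch divisor is $T$-smooth since $\mathcal{V}$ is; its fibrewise degree is $6$ by Example \ref{ex:branch_degree} since $\mathcal{L}^{-1}$ has degree $-3$ on fibres (as $L^2 = 2$ means $L$ is the pullback of $\mathcal{O}_{\mathbb{P}^2}(1)$, so the branch sextic has $\mathcal{L} = \mathcal{O}(-3)$). This defines a functor $F \colon \mathcal{F}_{2,\mathbb{Z}[1/2]} \to \mathcal{H}^{\mathrm{sm}}(2,2,3)_{\mathbb{Z}[1/2]}$, functoriality being automatic by Remark \ref{rem:doublecovers}.

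Conversely, I would define $G \colon \mathcal{H}^{\mathrm{sm}}(2,2,3)_{\mathbb{Z}[1/2]} \to \mathcal{F}_{2,\mathbb{Z}[1/2]}$ by sending a relative uniform cyclic cover $X \to P \to S$ to $X \to S$ equipped with the polarization $\lambda$ obtained by pulling back $\mathcal{O}_P(1)$ along $X \to P$. One must check the fibres are indeed K3 surfaces with $\lambda^2 = 2$: adjunction gives $\omega_{X/S}$ trivial on fibres since the double cover of $\mathbb{P}^2$ branched along a smooth sextic is a K3, and $\lambda^2 = 2 \cdot (\text{pullback of } \mathcal{O}_{\mathbb{P}^2}(1))^2 = 2$; and the pullback class lies in $\mathrm{Pic}_{X/S}(S)$ and is ample and fibrewise a genuine line bundle by Proposition \ref{prop:factorBS}. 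Then $G \circ F \cong \mathrm{id}$ is immediate from the last sentence of Proposition \ref{prop:factorBS}, and $F \circ G \cong \mathrm{id}$ reduces to the statement that the linear system $|\lambda|$ recovers the given morphism to the Brauer--Severi scheme, which again follows from Proposition \ref{prop:factorBS} and cohomology and base change; this routine verification is left to the reader.

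The main obstacle is purely a matter of bookkeeping rather than genuine difficulty: one must be careful that the morphism to the Brauer--Severi scheme $\mathcal{P}$ genuinely factors the cover, i.e.\ that $\mathcal{V} \to \mathcal{P}$ is finite of degree two rather than merely quasi-finite, and that the branch divisor has the stated fibrewise degree $6$ so that the target is $\mathcal{H}^{\mathrm{sm}}(2,2,3)$ and not some other $\mathcal{H}^{\mathrm{sm}}(2,2,d)$. Both points come down to the classical fact that a base-point-free degree two polarization on a K3 surface defines a finite double cover of $\mathbb{P}^2$ branched along a sextic, which must be invoked on geometric fibres and then spread out using flatness over the (reduced, in fact smooth) base stack $\mathcal{F}_{2,\mathbb{Z}[1/2]}$.
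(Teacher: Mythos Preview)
Your proposal is correct and follows essentially the same route as the paper: both directions are constructed via Proposition~\ref{prop:factorBS} and Lemma~\ref{lem:doublecovers}, with the forgetful functor serving as the inverse (your $F$ and $G$ are simply the paper's $G$ and $F$). The only slip is a sign: in Example~\ref{ex:branch_degree} the degree of $\mathcal{L}^{-1}$ is $3$, not $-3$, consistent with your own identification $\mathcal{L}=\mathcal{O}(-3)$.
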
 

\begin{proof} An object of $\mathcal{H}^{\text{sm}}(2,2,3)_{\mathbb{Z}[1/2]}(T)$ is of the form $X \to P \to T$ where each $X_t$ is a double cover branched along a sextic in $P_{\bar{t}} \simeq \mathbb{P}^2_{\kappa(\bar{t})}$. Thus, $X \to T$ is a proper smooth family of degree $2$ K3 surfaces, and defining $\lambda$ to be the pull-back of the ample generator in $\Pic_{P/S}$ to $\Pic_{X/S}$  yields a morphism $F: \mathcal{H}^{\text{sm}}(2,2,3)_{\mathbb{Z}[1/2]} \to \mathcal{F}_{2,\mathbb{Z}[1/2]}$. 

We next describe an inverse of the functor $F$. The universal object $(X \to \mathcal{F}_{2, \mathbb{Z}[1/2]}, \lambda)$ admits a morphism $X \to P \to \mathcal{F}_{2, \mathbb{Z}[1/2]}$ where $P$ is a relative Brauer-Severi scheme of dimension $2$ by Proposition \ref{prop:factorBS}. Since this is a finite flat cover of degree $2$, Lemma \ref{lem:doublecovers} yields a unique structure of a relative uniform cyclic cover. Thus, we obtain a map $G: \mathcal{F}_{2, \mathbb{Z}[1/2]} \to \mathcal{H}^{\text{sm}}(2,2,3)_{\mathbb{Z}[1/2]}$ which is inverse to $F$ (again by Proposition  \ref{prop:factorBS}).  
\end{proof}

From Theorem \ref{thm:H2rd} and Propositions \ref{prop:delpezzos}, \ref{prop:k3s},  we obtain the following Corollary. This recovers special cases of results of Scholl \cite{Scholl} and Andr\'e \cite{Andre}, and generalises Scholl's result from number fields to finitely generated fields of characteristic $0$.

\begin{corollary} If $A$ is an integrally closed $\mathbb{Z}$-finitely generated integral domain with $\chr \kappa(A) = 0$ and $2\in A^\times$, then the sets
	$\pi_0(\mathcal{D}_2(A))$ and $\pi_0(\mathcal{F}_2(A))$ are  finite.
\end{corollary}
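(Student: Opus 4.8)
The plan is to assemble the Corollary directly from the two equivalences of stacks just established and the finiteness result of Theorem \ref{thm:H2rd}. First I would observe that Proposition \ref{prop:delpezzos} gives an equivalence $\mathcal{D}_{2,\mathbb{Z}[1/2]} \simeq \mathcal{H}^{\mathrm{sm}}(2,2,2)_{\mathbb{Z}[1/2]}$ and Proposition \ref{prop:k3s} gives an equivalence $\mathcal{F}_{2,\mathbb{Z}[1/2]} \simeq \mathcal{H}^{\mathrm{sm}}(2,2,3)_{\mathbb{Z}[1/2]}$. Since $A$ is a $\mathbb{Z}$-algebra with $2 \in A^\times$, any $A$-point of $\mathcal{D}_2$ or $\mathcal{F}_2$ is in particular a $\mathbb{Z}[1/2]$-algebra point, so these equivalences induce bijections on isomorphism classes of $A$-points:
\[
\pi_0(\mathcal{D}_2(A)) \cong \pi_0(\mathcal{H}^{\mathrm{sm}}(2,2,2)(A)), \qquad \pi_0(\mathcal{F}_2(A)) \cong \pi_0(\mathcal{H}^{\mathrm{sm}}(2,2,3)(A)).
\]

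Next I would check that Theorem \ref{thm:H2rd} applies to both cases. That theorem concerns $\mathcal{H}^{\mathrm{sm}}(2,r,d)(A)$ for $r \geq 2$, $d \geq 1$ with $rd \neq 2$, and $A \subset k$ a $\mathbb{Z}$-finitely generated integrally closed subring where $k$ is an algebraically closed field of characteristic $0$; here we may take $k = \overline{\kappa(A)}$, which has characteristic $0$ by hypothesis. For Del Pezzo surfaces of degree $2$ we use $(r,d) = (2,2)$, so $rd = 4 \neq 2$; for K3 surfaces of degree $2$ we use $(r,d) = (2,3)$, so $rd = 6 \neq 2$. In both cases the hypotheses of Theorem \ref{thm:H2rd} are met, so $\pi_0(\mathcal{H}^{\mathrm{sm}}(2,2,2)(A))$ and $\pi_0(\mathcal{H}^{\mathrm{sm}}(2,2,3)(A))$ are finite. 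Combining with the bijections above yields finiteness of $\pi_0(\mathcal{D}_2(A))$ and $\pi_0(\mathcal{F}_2(A))$.

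There is essentially no hard step here — the corollary is a formal consequence once the stack-theoretic equivalences and the finiteness theorem are in place. The only mild subtlety worth a sentence is the reduction from $A$-points to $\mathbb{Z}[1/2]$-points: one should note that the equivalences of Propositions \ref{prop:delpezzos} and \ref{prop:k3s} are equivalences of stacks over $\mathbb{Z}[1/2]$, hence they are compatible with base change to $\Spec A$ and in particular induce equivalences of the groupoids of $A$-points, which is all that is needed. If anything is an "obstacle" it is merely bookkeeping: making sure one has invoked the correct specialization of $\mathcal{H}^{\mathrm{sm}}$ (namely $n = 2$) and the correct values of $r$ and $d$ matching the two geometric inputs. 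Accordingly, the write-up can be kept to a few lines.
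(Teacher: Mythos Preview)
Your proposal is correct and follows exactly the same approach as the paper, which simply states that the corollary follows from Theorem \ref{thm:H2rd} together with Propositions \ref{prop:delpezzos} and \ref{prop:k3s}. Your write-up just makes explicit the verification of hypotheses and the bookkeeping that the paper leaves implicit.
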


\subsection{Proof of Theorem \ref{thm:weighted}}
Let $r,A$ be as in Theorem \ref{thm:weighted} and let $\mathcal{X} \subset \P(1,1,1,r)_A$ be a smooth surface of degree $2r$. Then $\mathcal{X}$ has the natural structure of a double cover of $\P^2_A$ given by projecting to the first three coordinates and using Lemma \ref{lem:doublecovers}. Thus by Theorem \ref{thm:H2rd}, there are finitely many possibilities for $\mathcal{X}$ up to isomorphism,  as a double cover. Hence  only finitely many possibilities up to abstract isomorphism. \qed

\section{Abelian hypersurfaces and cyclic covers}

In this section we show that stacks of cyclic covers of  hypersurfaces in an abelian variety are arithmetically hyperbolic. To do this, we combine our cyclic covering method with recent work of Lawrence--Sawin \cite{lawrence2020shafarevich}.   For an application of our work here, we refer the reader to \cite{JM}.

\subsection{Abelian hypersurfaces}

Recall that a morphism $\pi: A \to S$ is said to be an \emph{abelian scheme} if it is a proper smooth group scheme with geometrically connected fibres. Moreover, for any torsor under an abelian scheme $P/S$ the functor $\Pic_{P/S}^{0}$ is representable by an abelian scheme over $S$, and if $P=A$, we denote this by $A^{\vee}$ and refer to it as the \emph{dual} of $A$ (see \cite[Remark 1.5]{faltingschai} and \cite[Prop.~2.1.3]{abelianolsson}). A \emph{degree d polarization} on $A$ is a finite flat map $\lambda: A \to A^{\vee}$ of group schemes whose kernel is finite locally free over $S$ of degree $d^2$. Given a relatively ample line bundle $L$ on an abelian scheme $A/S$, we say \emph{L has degree d} if the map
\[\lambda_L: A \to A^{\vee}, \quad a \mapsto t_a^*L \otimes L^{\vee},\]
is a degree $d$ polarization. Moreover, given a relatively ample line bundle $L$ on a torsor $\pi: P \to S$ under an abelian scheme $A/S$, the sheaf $\pi_*L$ is locally free of some rank $d$ and $L$ induces a polarization $\lambda_{L}: A \to A^{\vee}$ of degree $d$ (see \cite[2.2.3 and 2.2.4]{olssonabelian2}).

\begin{definition} Let $\mathcal{AH}_{g,d}$ denote the fibred category over the category of schemes, whose fibre over a scheme $S$ is the groupoid of triples $(\pi: P \to S, L, s:\mathcal{O}_P \to L)$ such that 
\begin{enumerate} 
\item $\pi: P \to S$ is proper, flat morphism of finite presentation, and each geometric fibre $P_{\bar{t}}$ admits the structure of an abelian variety of dimension $g$.
\item $L \in \Pic P$ is a line bundle such that $L|_{P_{\bar{t}}}$ is an ample line bundle of degree  $d$ on every geometric fibre $P_{\bar{t}}$.
\item the zero locus $V(s) \subset P$ is flat over $S$. 
\end{enumerate} 
A morphism $(\pi': P' \to S', L', s':\mathcal{O}_{P'} \to L') \to (\pi: P \to S, L, s:\mathcal{O}_{P} \to L)$ over $S' \to S$ consists of a morphism $f$ which makes the following square Cartesian

\begin{center}
\begin{tikzcd}
 P' \arrow[d] \arrow[r, "f"] & P \arrow[d]  \\
 S' \arrow[r] & S 
\end{tikzcd}
\end{center}
and an isomorphism $g: f^*L \simeq L'$ which sends $f^*s$ to $s'$. We call $\mathcal{AH}_{g,d}$ the \emph{moduli stack of abelian hypersurfaces of degree d}. The subcategory $\mathcal{AH}_{g,d}^{\text{sm}} \subset \mathcal{AH}_{g,d}$ consisting of $(\pi: P \to S, L, s:\mathcal{O}_P \to L)$ where $V(s) \subset P$ is smooth over $S$ will be referred to as the \emph{moduli stack of smooth abelian hypersurfaces of degree d}.
\end{definition}

\begin{remark} \label{rem:clarifyH}
The locally principal subscheme $V(s) \subset P$ is flat over $S$ if and only if it is Cartier on every fibre (see \cite[Tag 062Y]{stacks-project}). In other words, $V(s)$ is flat over $S$ if and only if $f_*s: \mathcal{O}_S \to \pi_*L$ avoids the zero section.
\end{remark} 

First we show these are stacks of divisorial pairs, in the sense of Definition \ref{def:pairs}.

\begin{proposition} \label{prop:Hpairs} The fibred categories $\mathcal{AH}_{g,d}$ and $\mathcal{AH}_{g,d}^{\text{sm}}$ are moduli stacks of divisorial pairs which have affine diagonal.
\end{proposition}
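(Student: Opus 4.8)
The plan is to exhibit $\mathcal{AH}_{g,d}$ as an open substack of $\mathbb{V}^{\circ}(u_*\mathcal{L})$ over a suitable moduli stack of varieties with a polarizing line bundle, thereby directly matching Definition \ref{def:pairs}, and then to cut out $\mathcal{AH}_{g,d}^{\text{sm}}$ as a further open substack. First I would let $\mathcal{A}b_{g,d}$ denote the locus inside $\mathcal{P}ol^{\mathcal{L}}$ where the universal object $(u:\mathcal{U}\to\mathcal{A}b_{g,d}, L)$ has geometric fibres that are abelian varieties of dimension $g$ and $L$ has degree $d$; I would check this is a locally closed (in fact open) substack. Openness of the abelian-variety condition among smooth proper geometrically connected families follows since ``having trivial $\mathrm{H}^0$ of the tangent sheaf being $g$-dimensional and the fibre being a torsor under its Albanese'' is an open condition — more concretely, one can appeal to the fact that being an abelian scheme is detected by the existence of a section together with the group structure extending, and the locus where a fibre is an abelian variety is open by standard deformation theory of abelian varieties (the relevant cohomology groups are locally free and base-change compatible). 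The degree-$d$ condition on $L$ is then open and closed on this locus. Having $\mathcal{A}b_{g,d}$, I would verify that $u_*L$ is locally free of rank $d$ and compatible with arbitrary base change: this is cohomology and base change, using that $\mathrm{H}^i(A_{\bar t}, L|_{A_{\bar t}})=0$ for $i>0$ when $L$ is ample on an abelian variety, so $u_*L$ is locally free of the expected rank $\chi(L)=d$ and formation commutes with base change.

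With this in hand, $\mathbb{V}^{\circ}(u_*L)$ over $\mathcal{A}b_{g,d}$ parametrizes triples $(\pi:P\to S, L, s:\mathcal{O}_P\to L)$ with $s$ nowhere zero as a section of $\pi_*L$, i.e.\ (by Remark \ref{rem:clarifyH}) with $V(s)$ flat over $S$. This is precisely the description of $\mathcal{AH}_{g,d}$ in the definition, so $\mathcal{AH}_{g,d}\cong\mathbb{V}^{\circ}(u_*L)$ and hence is a moduli stack of divisorial pairs. (One subtlety: the definition of $\mathcal{AH}_{g,d}$ allows $P$ to be merely a flat proper family whose geometric fibres admit the structure of an abelian variety, without a marked zero section. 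But the section $s$ defining the divisor is not required to respect any group structure, and in fact the moduli of abelian varieties without origin equals the moduli with origin since any abelian torsor over a field with a rational point — here we don't even have that — hmm; more carefully, I would note that $\mathcal{P}ol^{\mathcal{L}}$ does not remember an origin, and the abelian-variety locus is the one where fibres are torsors under their Picard-dual's dual, so the matching is exact.) Then $\mathcal{AH}_{g,d}^{\text{sm}}\subset\mathcal{AH}_{g,d}$ is the open substack where $V(s)\to S$ is additionally smooth; openness of smoothness of the divisor is standard (smooth locus of a morphism is open, combined with properness of $V(s)\to S$ or with flatness plus fibrewise smoothness being open, as in \cite[Tag 062Y]{stacks-project} and the openness of the smooth locus).

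Finally, for the affine diagonal: the diagonal of $\mathcal{P}ol^{\mathcal{L}}$ is affine (automorphisms of a polarized variety form an affine group scheme, and isomorphisms between two polarized families are an affine scheme over the base — this is \cite[Tags 0D1M, 0D4X]{stacks-project} or standard Hilbert-scheme/isomorphism-scheme arguments since a relatively ample line bundle is present), hence so is the diagonal of any substack, and $\mathbb{V}^{\circ}(u_*L)\to\mathcal{A}b_{g,d}$ is representable affine (it is an open subscheme of a vector bundle), so the composite $\mathcal{AH}_{g,d}\to\Spec\mathbb{Z}$ still has affine diagonal; taking the open substack $\mathcal{AH}_{g,d}^{\text{sm}}$ preserves this. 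The main obstacle I anticipate is the first step — carefully justifying that the locus of abelian varieties (without a chosen origin, and allowing $P$ to be an a priori merely proper flat family) is a well-defined locally closed, and in fact open, substack of $\mathcal{P}ol^{\mathcal{L}}$, and identifying it correctly; once that is pinned down, everything else is a routine unwinding of definitions and invocation of cohomology-and-base-change plus openness of the smooth locus.
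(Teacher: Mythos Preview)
Your approach is essentially the same as the paper's: define the auxiliary stack $\overline{\mathcal{AH}}_{g,d}$ (your $\mathcal{A}b_{g,d}$) of pairs $(P,L)$ with $P$ fibrewise an abelian variety of dimension $g$ and $L$ ample of degree $d$, show it is open in $\mathcal{P}ol^{\mathcal{L}}$, and then realize $\mathcal{AH}_{g,d}$ as $\mathbb{V}^{\circ}(u_*L)$ over it; the smooth variant is the open substack where $V(s)$ is smooth.

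Two points of comparison. First, for the openness of the abelian-variety locus you remain vague and flag it as the main obstacle; the paper handles this cleanly by reducing to Noetherian base, then base-changing along $P\to T$ itself to acquire a section (the diagonal), and invoking \cite[Thm.~6.14]{GIT} to conclude that $P\times_T P\to P$ is an abelian scheme, hence the locus is open. This is exactly the missing ingredient you anticipated. Second, a small slip: $\mathbb{V}^{\circ}(u_*L)\to\mathcal{A}b_{g,d}$ is \emph{quasi}-affine, not affine, once $d>1$ (the complement of the origin in $\mathbb{A}^d$ is not affine). This does not affect your conclusion, since a representable separated (in particular quasi-affine) morphism over a stack with affine diagonal still has affine diagonal; the paper states it this way. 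For the affine diagonal of the base, the paper cites \cite[\S 2.1]{dJS10} rather than asserting it for all of $\mathcal{P}ol^{\mathcal{L}}$.
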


\begin{proof} 
We first construct the associated stack with polarising line bundle. 
Consider the fibred category $\overline{\mathcal{AH}}_{g,d}$ whose fibre consists of $(\pi: P \to S, L)$ where 
\begin{enumerate} \item $\pi: P \to S$ is a proper, flat morphism of finite presentation and each geometric fibre $P_{\bar{s}}$ admits the structure of an abelian variety of dimension $g$.
\item $L \in \Pic P$ is a line bundle such that $L|_{P_{\bar{s}}}$ is a degree $d$ ample line bundle for each geometric fibre $P_{\bar{s}}$.
\end{enumerate}
A morphism $(\pi: P' \to S', L') \to (\pi: P \to S, L)$ is given by a morphism $f$ making the diagram Cartesian  

\begin{center}
    \begin{tikzcd}
 P' \arrow[d] \arrow[r, "f"] & P \arrow[d]  \\
 S' \arrow[r] & S 
\end{tikzcd}
\end{center}
and an isomorphism $g: f^*L \simeq L'$. 

We claim that this is an open substack of $\mathcal{P}ol^{\mathcal{L}}$. Indeed, the locus where the morphism $f: P \to T$ is smooth with geometrically integral fibres is open (see \cite[Appendix E.1]{torstenalg}) and the locus where $P_{\bar{t}}$ has the structure of an abelian variety is open in $T$. To see the latter claim: by standard approximation methods it suffices to assume that $T$ is Noetherian and we may also assume that $T$ is connected with a point $\bar{t}: \Spec k \to T$ such that $P_{\bar{t}}$ is an abelian variety. Base changing along $P \to T$ one obtains a section, thus we may apply \cite[Thm.~6.14]{GIT} to deduce that $f_P: P \times_T P \to P$ (equipped with the diagonal map) is an abelian scheme. 

Let $(\mathcal{U} \to \mathcal{P}ol^{\mathcal{L}}, \mathcal{L})$ denote the universal object of $\mathcal{P}ol^{\mathcal{L}}$, so far we have cut out an open locus $\mathcal{V} \subset \mathcal{P}ol^{\mathcal{L}}$ where the geometric fibres of the universal family $\mathcal{U}|_{\mathcal{V}} \to \mathcal{V}$ admit the structure of an abelian variety. By \cite[Prop.~6.13]{GIT} $\overline{\mathcal{AH}}_{g,d}$ is open in $\mathcal{V}$ since it defines the locus where the accompanying line bundle has degree $d$ on each fibre. Thus $\overline{\mathcal{AH}}_{g,d}$ is an open substack of $\mathcal{P}ol^{\mathcal{L}}$, and it has affine diagonal by \cite[\S 2.1]{dJS10}.

To conclude, observe that if $(f:\mathcal{P} \to \overline{\mathcal{AH}}_{g,d}, \mathcal{L})$ is the universal object of $\overline{\mathcal{AH}}_{g,d}$ then $\mathcal{AH}_{g,d}$ can be realized as the complement of the zero section in the tautological rank $d$ vector bundle $f_*(\mathcal{L})$ over $\overline{\mathcal{AH}}_{g,d}$. It follows that $\mathcal{AH}_{g,d}$ is a moduli stack of divisorial pairs, and the same follows for the open substack $\mathcal{AH}_{g,d}^{\text{sm}}$. Lastly, since the morphism $\mathcal{AH}_{g,d} \to \overline{\mathcal{AH}}_{g,d}$ is quasi-affine, the stack  $\mathcal{AH}_{g,d} $  has   affine diagonal. 
\end{proof}

\begin{proposition} \label{prop:Hseparated}
	The stack $\mathcal{AH}_{g,d}^{sm}$ has finite diagonal. In particular, it is separated. 
\end{proposition}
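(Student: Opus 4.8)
The plan is to use the criterion — already recalled in the proof of Lemma~\ref{lem:finitediagonal}, namely \cite[Tag~02LS]{stacks-project} — that an algebraic stack has finite diagonal if and only if it is quasi-DM and separated. By Proposition~\ref{prop:Hpairs} the stack $\mathcal{AH}_{g,d}^{sm}$ already has affine, hence quasi-compact and separated, diagonal, so I only need to establish (i) that its stabilizer group schemes are zero-dimensional (quasi-DM), and (ii) that it satisfies the valuative criterion of separatedness.

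For (i) I would fix an algebraically closed field $k$ and an object $\xi = (P \to \Spec k, L, s)$, and note that an automorphism of $\xi$ amounts to an automorphism of the scheme $P$ preserving the effective divisor $D = V(s)$: the compatible line-bundle isomorphism $g$ is then uniquely determined, since $s$ is a non-zero-divisor and $H^0(P,\mathcal{O}_P^\times) = k^\times$. Choosing a $k$-point as origin makes $P$ an abelian variety, whose automorphism group scheme $\underline{\mathrm{Aut}}_{P/k}$ has identity component $P$ acting by translations. The subgroup scheme $\underline{\mathrm{Aut}}(P,D)$ preserving $D$ has finite identity component: a positive-dimensional translation stabilizer of $D$ would contain a nonzero abelian subvariety $B$, so that $D$ descends to $P/B$ and $L = \mathcal{O}_P(D)$ would be trivial on complete curves contained in cosets of $B$, contradicting ampleness of $L$. (In fact $\underline{\mathrm{Aut}}(\xi)$ is finite: modulo the finite translation stabilizer it maps into the automorphism group of the polarized abelian variety $(P,\lambda_L)$, which is finite by the classical rigidity argument — but zero-dimensionality is all that is needed for quasi-DM.)

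For (ii) I would reduce, by a standard limit argument (cf.\ \cite[Tag~0CM0]{stacks-project}), to a strictly henselian discrete valuation ring $R$ with fraction field $K$, take objects $\xi_i = (P_i \to \Spec R, L_i, s_i)$ for $i=1,2$ together with an isomorphism $\varphi_K$ of their generic fibres, and extend $\varphi_K$ to an isomorphism $\xi_1 \xrightarrow{\sim} \xi_2$ over $R$ (uniqueness being automatic, as $\underline{\mathrm{Isom}}_R(\xi_1,\xi_2)$ is affine over $R$). The crucial geometric input is that, $R$ being strictly henselian, each $P_i$ acquires an $R$-point and is therefore an abelian scheme over $R$ by \cite[Thm.~6.14]{GIT}, exactly as in the proof of Proposition~\ref{prop:Hpairs}; since an abelian scheme is the Néron model of its generic fibre, the underlying morphism of $\varphi_K$ extends uniquely to a morphism $P_1 \to P_2$ over $R$, and it is an isomorphism because the same applies to $\varphi_K^{-1}$ and $P_1, P_2$ are integral with schematically dense generic fibre. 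It then remains to extend the line-bundle datum: one has $\varphi^*L_2 \cong L_1$ because $\Pic P_1 \to \Pic P_{1,K}$ is injective ($P_1$ being regular with irreducible reduced special fibre and $\Pic R = 0$), and the sections $\varphi^*s_2$ and $s_1$ cut out the same effective Cartier divisor on $P_1$ — this is exactly where the $R$-flatness of the divisor built into the definition of $\mathcal{AH}_{g,d}^{sm}$ enters, since an $R$-flat closed subscheme coincides with the schematic closure of its generic fibre — so there is a unique isomorphism $g \colon \varphi^*L_2 \xrightarrow{\sim} L_1$ sending $\varphi^*s_2$ to $s_1$, and by this same uniqueness over $K$ it restricts to $g_K$.

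Putting these together, $\mathcal{AH}_{g,d}^{sm}$ is quasi-DM and separated, so \cite[Tag~02LS]{stacks-project} gives that it has finite diagonal, and in particular it is separated. I expect the separatedness to be the only real obstacle: one must genuinely invoke that abelian varieties have canonical extensions over valuation rings (Néron models) and, simultaneously, that the auxiliary divisor rigidifies the polarization, so that the $\mathbb{G}_m$-gerbe and line-bundle ambiguities which prevent $\mathcal{P}ol^{\mathcal{L}}$ or bare polarized abelian varieties from being separated do not intervene.
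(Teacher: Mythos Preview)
Your argument is correct, but the paper's own proof is a two-line application of a much more general black box. The paper simply observes that the diagonal is affine (Proposition~\ref{prop:Hpairs}) and proper (by the Matsusaka--Mumford theorem \cite[Thm.~2]{MatMum}, which extends an isomorphism of smooth projective polarized varieties over the generic point of a DVR across the special fibre whenever that fibre is not ruled; abelian varieties are not ruled), and affine plus proper gives finite immediately.

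Your part~(ii) is, in effect, a direct proof of the relevant instance of Matsusaka--Mumford, replacing the general extension theorem by the N\'eron mapping property of abelian schemes together with the flat-closure argument for the divisor. This is a legitimate and arguably more transparent substitute if one wishes to avoid citing Matsusaka--Mumford. Your part~(i), however, is not needed: once you have established that the diagonal is proper (your~(ii)) and you already know it is affine (which you cite from Proposition~\ref{prop:Hpairs}), finiteness follows at once without any separate quasi-DM or stabilizer analysis. So the stabilizer computation, while correct, is redundant for the stated goal --- you are proving strictly more than required by going through the quasi-DM~$+$~separated criterion rather than the shorter affine~$+$~proper route.
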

\begin{proof} 
The diagonal is proper by the Matsusaka-Mumford theorem \cite[Thm.~2]{MatMum} (see \cite[Thm.~4.3]{Popp} for a different formulation) and affine by Proposition \ref{prop:Hpairs}. Therefore,  the diagonal is finite, as required. 
\end{proof}
  
We briefly recall the Albanese torsor and its universal property. (For references in much greater generality see \cite[Thm.~VI.3.3]{FGA} and \cite[5.14, 5.20, and 5.21]{KlePic}.) Let $X$ be a smooth proper $S$-scheme where $S$ is defined over $\mathbb{Q}$ and assume that $X/S$ admits a relatively ample line bundle. Then there is an abelian $S$-scheme $\text{Alb}^0_{X/S}$, a torsor $\text{Alb}^1_{X/S}$ under it, and a $S$-morphism $X \to \text{Alb}^1_{X/S}$. This has the following universal property: if $P$ is a torsor under an abelian scheme $A/S$ and $X \to P$ is an $S$-morphism, then there is a unique factorization $X \to \text{Alb}^1_{X/S} \to P$ over $S$ and a unique map of abelian $S$-schemes $\text{Alb}^0_{X/S} \to A$ making  $\text{Alb}^1_{X/S} \to P$ equivariant.

The following is an application of the main result of Lawrence--Sawin \cite{lawrence2020shafarevich}.

\begin{theorem} \label{thm:sawinhyp} 
	Let $d \geq 1$ and $g \geq 4$ or $g=2$.
	Let $K$ be a number field and $S$ a finite set of finite places of $K$.
	Then $\pi_0(\mathcal{AH}_{g,d}^{sm}(\O_{K,S}))$ is finite.
\end{theorem}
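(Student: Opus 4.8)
The statement to prove is that $\pi_0(\mathcal{AH}_{g,d}^{sm}(\O_{K,S}))$ is finite when $g=2$ or $g\geq 4$. The strategy is to reduce this, via the twisting lemma and the abelian-scheme structure, to the Lawrence--Sawin finiteness theorem for hypersurfaces in a fixed polarized abelian variety. The main obstacle will be (a) descending from the algebraically closed field to $\O_{K,S}$, which requires knowing the relevant stacks have finite diagonal (already available: Proposition~\ref{prop:Hseparated}), and (b) accounting for the fact that an object of $\mathcal{AH}^{sm}_{g,d}$ does not come with a marked origin on its abelian fibre — it is a hypersurface in a \emph{torsor}, not in an abelian variety — so one must first rigidify by passing to the Albanese.

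\medskip

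\emph{Step 1: Reduce to arithmetic hyperbolicity over $\overline{K}$.} Since $\mathcal{AH}^{sm}_{g,d}$ has finite diagonal (Proposition~\ref{prop:Hseparated}), the twisting lemma (Theorem~\ref{thm:twists}) reduces the claim to showing that $\mathcal{AH}^{sm}_{g,d,\overline{K}}$ is arithmetically hyperbolic over $\overline{K}$, in the sense of Definition~\ref{defn:arithmetic_hyperbolicity}. (Strictly, one should also observe that $\mathcal{AH}^{sm}_{g,d}$ is of finite type; this follows from its construction as a quasi-affine morphism to an open substack of $\mathcal{P}ol^{\mathcal{L}}$ cut out by boundedness conditions.) So fix a $\mathbb{Z}$-finitely generated subring $A \subset \overline{K}$, with a suitable model, and let $A \subset A' \subset \overline{K}$ be a larger such subring; we must bound the image of $\pi_0(\mathcal{AH}^{sm}_{g,d}(A'))$ in $\pi_0(\mathcal{AH}^{sm}_{g,d}(\overline{K}))$.

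\medskip

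\emph{Step 2: Rigidify via the Albanese, producing a marked polarized abelian variety.} Given $(\pi\colon P \to \Spec A', L, s)$ in $\mathcal{AH}^{sm}_{g,d}(A')$, the generic fibre $P_{\overline{K}}$ is a torsor under an abelian variety $B=\mathrm{Alb}^0_{P_{\overline{K}}/\overline{K}}$ of dimension $g$ over $\overline{K}$, with the hypersurface $D = V(s) \subset P_{\overline{K}}$ a smooth ample divisor. After extending $A'$ inside $\overline{K}$ (which is harmless for the purpose of bounding the image in $\pi_0(\ldots(\overline{K}))$), we may assume $P_{\overline{K}}(\overline{K})\ne\emptyset$ is witnessed over $A'$, so that $P$ itself acquires a section and hence the structure of an abelian scheme over $\Spec A'$ with a relatively ample degree-$d$ line bundle. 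Concretely: choosing a point $x_0 \in D(\overline{K})$ (possible after enlarging $A'$, since $D$ is geometrically integral of positive dimension — one picks a closed point and extends) translates $D$ to pass through the origin. This identifies the image of $\pi_0(\mathcal{AH}^{sm}_{g,d}(A'))$, up to the action of $B(\overline{K})$ by translation and up to the finitely many choices of polarization of degree $d$, with the set of isomorphism classes of pairs $(B, D)$ with $B$ a $g$-dimensional abelian variety over $\overline{K}$ admitting good reduction outside a finite set of places (determined by $A'$ together with finitely many places arising from spreading out), $D \subset B$ a smooth ample divisor of the appropriate degree, also with good reduction outside that finite set. Crucially, passing to $\overline{K}$ trivializes all Galois-theoretic subtleties and all torsor structure, so there are only finitely many choices of $B$ (by Faltings, after fixing the places of bad reduction and the dimension) and, for each fixed $B$, finitely many polarizations to consider.

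\medskip

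\emph{Step 3: Apply Lawrence--Sawin.} For each of the finitely many pairs $(B,\lambda)$ — a polarized abelian variety over $\overline{K}$ of dimension $g\in\{2\}\cup\{n:n\geq 4\}$ with good reduction outside a fixed finite set — the theorem of Lawrence and Sawin \cite{lawrence2020shafarevich} asserts that there are only finitely many smooth ample hypersurfaces $D \subset B$ of bounded degree with good reduction outside that set, up to the natural equivalence (translation and automorphisms of $B$ preserving $\lambda$). Summing over the finitely many $(B,\lambda)$, and over the finitely many translation/polarization ambiguities introduced in Step~2, yields the finiteness of $\Im[\pi_0(\mathcal{AH}^{sm}_{g,d}(A')) \to \pi_0(\mathcal{AH}^{sm}_{g,d}(\overline{K}))]$. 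Hence $\mathcal{AH}^{sm}_{g,d,\overline{K}}$ is arithmetically hyperbolic over $\overline{K}$, and by Step~1 we conclude $\pi_0(\mathcal{AH}^{sm}_{g,d}(\O_{K,S}))$ is finite.

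\medskip

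\emph{Remarks on the main difficulty.} The genuinely substantive input is the Lawrence--Sawin theorem; everything else is bookkeeping in the theory of moduli stacks of polarized abelian varieties. The one point requiring care is the bookkeeping in Step~2: one must be careful that ``good reduction of $P$'' (as a torsor) over $\O_{K,S}$ translates, after the Albanese rigidification and a controlled enlargement of the base ring, into good reduction of the marked pair $(B,D)$ outside a set of places that depends only on $S$ and the dimension/degree data — not on the individual object. This is exactly where the definition of arithmetic hyperbolicity via the image in the $\overline{K}$-points pays off, since it permits enlarging $A'$ freely. The hypothesis $g=2$ or $g\geq 4$ is inherited verbatim from the range in which \cite{lawrence2020shafarevich} is proved; no improvement is possible by this argument. (This is the informal Theorem~\ref{mainthm:gentype} and underlies Theorem~\ref{thm:Ggp}; see the discussion preceding Theorem~\ref{thm:sawinhyp} on the Albanese torsor.)
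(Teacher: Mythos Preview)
Your overall architecture matches the paper's: use Proposition~\ref{prop:Hseparated} and the twisting lemma to reduce to arithmetic hyperbolicity over $\overline{\mathbb{Q}}$, then pass through the Albanese, invoke Faltings to fix the underlying abelian scheme, fix the polarization, and finish with Lawrence--Sawin (and Faltings for curves when $g=2$; Lawrence--Sawin itself is only stated for $g\geq 4$).

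There is, however, a genuine gap in your Step~2. You write that ``after extending $A'$ inside $\overline{K}$'' one may assume the torsor $P$ acquires a section, and you justify this by saying that arithmetic hyperbolicity ``permits enlarging $A'$ freely''. This is backwards: enlarging $A'$ to $A''$ enlarges the image $\Im[\pi_0(\mathcal{X}(A'))\to\pi_0(\mathcal{X}(\overline{K}))]$, so it suffices to bound the image of $A''$-points only if a \emph{single} $A''$ trivializes \emph{all} torsors that arise. But your choice of extension depends on the individual object $(P,L,s)$, so as written you have not produced a uniform bound. Passing to $\overline{K}$ does kill the torsor class, but Lawrence--Sawin is a statement over a number field, so to apply it you must descend $D\subset P_{\overline{K}}$ back to a hypersurface in an abelian \emph{scheme} over some fixed $\O_{K',S'}$; that descent is exactly what requires controlling the torsor.

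The paper fills this gap with an explicit step you omit: having fixed the abelian scheme $A$ (Faltings) and the polarization $\lambda$ (finiteness of polarizations of given degree), the set of isomorphism classes of $A$-torsors $P$ admitting a line bundle inducing $\lambda$ is a coset of a quotient of $\H^1(\O_{K,S},\ker\lambda)$, which is finite by Lemma~\ref{lem:GMB} (cf.\ \cite[Prop.~4.3]{poonen1999cassels}). Only after this are there finitely many torsors in play, and only then can one pass to a single $K'$ over which all of them have sections and apply \cite{lawrence2020shafarevich}. Inserting this torsor-finiteness step makes your argument correct and essentially identical to the paper's.
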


\begin{proof} 
The result from \cite{lawrence2020shafarevich} applies to families of hypersurfaces in a fixed abelian variety. We want to show an analogous result for hypersurfaces in varying families of torsors under abelian varieties. To prove  this we use known finiteness statements and pass to a field extension to trivialise various data, then perform a descent back to the ground field.

We first show that $\mathcal{AH}_{g,d, \overline{\mathbb{Q}}}^{sm}$ is arithmetically hyperbolic. To do so, it suffices to show that for every number field $K \subset \overline{\mathbb{Q}}$ and every finite set of places $S$, the set
\begin{equation} \label{eqn:is_finite}
\text{Im}[\pi_0(\mathcal{AH}_{g,d}^{sm}(\O_{K,S})) \to \pi_0(\mathcal{AH}^{sm}_{g,d}(\overline{\mathbb{Q}}))] \quad \text{is finite}.
\end{equation} 
 Observe that for every $(X, L, s) \in \mathcal{AH}_{g,d}(\O_{K,S})$ the universal morphism to the Albanese torsor $X \to \text{Alb}^1_{X/\O_{K,S}}$ is an isomorphism. Thus, $X$ is naturally a torsor under the abelian scheme $\text{Alb}^0_{X/\O_{K,S}}$ over $\O_{K,S}$. By the Shafarevich conjecture for abelian varieties  \cite{Faltings2}, it suffices to assume that every $X$ is a torsor under a fixed abelian scheme $A \to \O_{K,S}$. 
Note that $L$ induces a degree $d$ polarization $\lambda: A \to A^{\vee}$ on $A$. But the set of isomorphism classes of polarizations of degree $d$ on $A$ is finite by \cite[Tem.~V.18.1]{CornellSilverman}, so we may assume that $\lambda_L$ induces a fixed degree $d$ polarization $\lambda$. In fact, we may also fix the isomorphism class of $X$ since the set, $J$, of isomorphism classes of $A$-torsors $X$ which admit a line bundle $L$ inducing the polarization $\lambda_L=\lambda$ is finite. Indeed, let $c_{\lambda}$ denote the image of $\lambda \in \text{NS}_{A}(\O_{K,S})$ in $\H^1(\Spec \O_{K,S}, A^{\vee})$ then by \cite[Prop.~4.3]{poonen1999cassels} the set $J$ is the inverse image of $c_{\lambda}$ along the map
\[\H^1(\lambda): \H^1(\O_{K,S}, A) \to \H^1(\O_{K,S}, A^{\vee})\]
Thus $J$ is a torsor under a quotient of $\H^1(\O_{K,S}, \text{Ker}(\lambda))$ and the latter is finite by Lemma \ref{lem:GMB}.

To prove \eqref{eqn:is_finite}, we base change to a larger number field $K \subset K'$ so that $X_{K'}$ admits a section and $X_{\O_{K',S'}}$ inherits the structure of an abelian scheme, where $S'$ denotes the set of places of $K'$ above $S$. Now the set of sections $s: \mathcal{O}_X \to L$, with smooth zero locus, up to isomorphism, is finite by \cite[Thm.~1.1]{lawrence2020shafarevich} ($g \geq 4$) and Faltings \cite{Faltings2} ($g=2$). This gives the required finiteness, hence proves that $\mathcal{AH}_{g,d, \overline{\mathbb{Q}}}^{sm}$ is arithmetically hyperbolic.
To complete the proof, by Proposition \ref{prop:Hseparated} we know that $\mathcal{AH}_{g,d}^{sm}$ has finite diagonal. Therefore,  since $\mathcal{AH}_{g,d, \overline{\mathbb{Q}}}^{sm}$ is arithmetically hyperbolic over $\overline{\mathbb{Q}}$, the result   follows from Theorem \ref{thm:twists}. 
\end{proof}

\subsection{Double covers}

Theorem \ref{thm:sawinhyp} and Corollary \ref{cor:finite_dp} immediately give finiteness results for integral points on cyclic covering stacks of stacks of polarised abelian varieties. We make these explicit for double covers to obtain infinitely many new moduli stacks of canonically polarized varieties which are arithmetically hyperbolic. 

\begin{definition} 
We define $\mathcal{G}_{g,p} \to \Spec \mathbb{Q}$ to be a category fibred in groupoids whose fibre category over a scheme $S$ consists of morphisms $\pi: X \to S$ such that

\begin{enumerate} \item $\pi$ is a smooth proper morphism  of relative dimension $g$
\item the  fibres of $\pi$ are connected varieties of general-type with geometric genus $p$.
\item the Albanese map of the geometric fibres of $\pi$ is finite flat of degree $2$.
\end{enumerate}

\noindent Morphisms in this fibre category are isomorphisms over $S$. We call $\mathcal{G}_{g,p}$ the stack of \emph{double Albanese varieties} of dimension $g$ and genus $p$. 
\end{definition}

Such varieties occur for example amongst surfaces $S$ of general type with $q(S) = 2$ and $K_S^2 = 4\chi(\O_S)$ \cite[Thm.~0.1]{Man03}. 

\begin{proposition} \label{prop:G_{g,p}}
 The category $\mathcal{G}_{g,p}$ is a moduli stack of polarized varieties. 
 \end{proposition}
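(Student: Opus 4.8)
The plan is to exhibit $\mathcal{G}_{g,p}$ as the stack underlying a stack of cyclic covers of degree $2$, and then to equip it with a polarising line bundle by pushing forward a power of a relatively ample line bundle along the Albanese map. First I would observe that, by the universal property of the Albanese torsor recalled just before the statement (valid since we work over $\mathbb{Q}$ and our families carry a relatively ample line bundle), any object $\pi \colon X \to S$ of $\mathcal{G}_{g,p}(S)$ comes with a canonical morphism $a \colon X \to \mathrm{Alb}^1_{X/S}$ which, by hypothesis (3) and the compatibility of the Albanese construction with base change, is fibrewise finite flat of degree $2$; hence $a$ itself is finite flat of degree $2$. Moreover $\mathrm{Alb}^1_{X/S}$ is a torsor under the abelian scheme $\mathrm{Alb}^0_{X/S}$, so we obtain a morphism of $S$-schemes whose source is a family of double Albanese varieties and whose target is a family of torsors under abelian $g$-folds. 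This produces a functor $\mathcal{G}_{g,p} \to \mathcal{AH}^{\mathrm{sm}}_{g,d}$ for a suitable $d$: indeed, since $2 \in \mathcal{O}_S^\times$, Lemma \ref{lem:doublecovers} endows $a$ with a unique structure of relative uniform cyclic cover of degree $2$, whence by Remark \ref{rem:alternatecyclic} the datum $(X \to \mathrm{Alb}^1_{X/S}, \mathcal{L}, s)$ recovers $X$, and $\mathcal{L}^{-1}$ is relatively ample on $\mathrm{Alb}^1_{X/S}$ of some fixed degree $d$ (fixed because $p$ and $g$ determine it via the eigensheaf decomposition $a_*\mathcal{O}_X = \mathcal{O} \oplus \mathcal{L}$ and Riemann--Roch on the geometric fibres).

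Having identified $\mathcal{G}_{g,p}$ with an open-and-closed locus of the stack of relative uniform cyclic covers of degree $2$ sitting over $\overline{\mathcal{AH}}_{g,d}$ — concretely, with (an open substack of) $(\mathcal{AH}_{g,d})_2$ in the notation of Theorem \ref{thm:properetale}, after passing to the $r$-divisible locus — I would then transport the polarising line bundle. The key point is that a moduli stack of cyclic covers of degree $r$ over $\mathcal{Y}$ carries, on its universal total space $\mathcal{X}_r$, the line bundle $\mathcal{M}$ with $\mathcal{M}^r \simeq \mathcal{L}|_{\mathcal{Y}_r}$ (see the proof of Theorem \ref{thm:properetale}(1)); here the branch divisor line bundle $\mathcal{L} = \mathcal{O}(D)$ on $\mathrm{Alb}^1_{X/S}$ is relatively ample, and one checks that its pullback to $X$ via the finite morphism $a$, suitably twisted by the relatively ample $\mathcal{L}^{-1}$ on the base torsor, is relatively ample on $X$ — alternatively, one shows directly that the restriction of $\mathcal{O}_{\mathrm{Alb}^1}(D)$ pulls back to an ample line bundle on each geometric fibre of $X/S$, using that a finite morphism pulls ample to ample. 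This yields a relatively ample line bundle $M$ on the universal family over $\mathcal{G}_{g,p}$, i.e.\ an object of $\mathcal{P}ol^{\mathcal{L}}(\mathcal{G}_{g,p})$, and passing to its class in $\Pic_{X/S}$ (rigidifying by $\mathbb{G}_m$ as in Remark \ref{rem:correslb}) gives the required morphism $\mathcal{G}_{g,p} \to \mathcal{P}ol^\Lambda$. To conclude that $\mathcal{G}_{g,p}$ is a moduli stack of polarized varieties in the sense of Definition \ref{def:canpol}, it remains to verify that this morphism is an immersion: fullness and faithfulness follow because a morphism in $\mathcal{G}_{g,p}$ is an isomorphism of the varieties over $S$, and such an isomorphism automatically commutes with the functorial Albanese maps, hence (by the uniqueness in Lemma \ref{lem:doublecovers} and Remark \ref{rem:doublecovers}) is a morphism of cyclic covers compatible with the induced polarisations; and the locus cut out — smooth proper with connected general-type fibres of the right dimension, genus, and Albanese degree — is locally closed in $\mathcal{P}ol^\Lambda$ by the openness results for smoothness, geometric connectedness, Hodge numbers, and ampleness of the canonical bundle, together with the locally closed nature of the condition that the Albanese map be finite flat of degree $2$ (the Albanese being compatible with base change in char.\ $0$).

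I expect the main obstacle to be the last point: showing that the defining conditions of $\mathcal{G}_{g,p}$ carve out a \emph{locally closed} substack (not merely a constructible one) inside $\mathcal{P}ol^\Lambda$, and more precisely that the ``double Albanese'' condition is well-behaved in families. The cleanest route is probably to avoid checking this directly in $\mathcal{P}ol^\Lambda$ and instead route everything through the already-constructed stack of divisorial pairs $\mathcal{AH}^{\mathrm{sm}}_{g,d}$ and its universal stack of degree-$2$ cyclic covers $(\mathcal{AH}^{\mathrm{sm}}_{g,d})_2$, which is a moduli stack of cyclic covers and in particular an open substack of a vector bundle over a moduli stack of varieties with polarising line bundle, hence visibly of the required form by Theorem \ref{thm:properetale}(1); one then shows $\mathcal{G}_{g,p}$ is open in (a union of components of) $(\mathcal{AH}^{\mathrm{sm}}_{g,d})_2$ by identifying, via the Albanese universal property, the objects of $\mathcal{G}_{g,p}(S)$ with exactly those relative uniform cyclic covers $X \to P \to S$ in $(\mathcal{AH}^{\mathrm{sm}}_{g,d})_2$ whose total space $X$ is smooth of general type with geometric genus $p$ — and general type plus the fixed geometric genus $p$ is an open condition on the fibres by semicontinuity and the openness of the ample locus for the relative canonical bundle. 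Apart from this, the arguments are bookkeeping: the equivalence with cyclic covers is Lemma \ref{lem:doublecovers} together with Remark \ref{rem:alternatecyclic}, and ampleness of the induced polarisation is standard.
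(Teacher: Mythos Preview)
Your approach takes a substantial detour. The paper's proof is direct: it first verifies that every object of $\mathcal{G}_{g,p}$ over a field is \emph{canonically polarized} --- writing the Albanese map as a degree-$2$ cyclic cover $a\colon X \to A$ one computes $\omega_X = a^* L^{-1}$ with $L^{-1}$ big on $A$, hence ample (big implies ample on an abelian variety). The immersion into $\mathcal{P}ol^\Lambda$ is then simply $(X \to S) \mapsto (X \to S, [\omega_{X/S}])$; full faithfulness is automatic because isomorphisms preserve the canonical bundle, and the image is cut out inside the locally closed substack $\mathcal{P}ol^{\omega}$ (where $\lambda = [\omega]$, via \cite[Cor.~4.3.2]{abramovich2011stable}) by the further locally closed conditions that the fibres have dimension $g$, that $\pi_*\omega$ is locally free of rank $p$, and that the universal Albanese map is finite flat of degree $2$.

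Your route instead identifies $\mathcal{G}_{g,p}$ with (an open piece of) the stack $(\mathcal{AH}^{\mathrm{sm}}_{g,d})_2$ and then asserts this is ``visibly of the required form'' by Theorem~\ref{thm:properetale}(1). That inference is a genuine gap: Theorem~\ref{thm:properetale}(1) says $\mathcal{X}_r$ is a moduli stack of \emph{cyclic covers} in the sense of Definition~\ref{def:stackcyclic}, i.e.\ an open substack of $\mathbb{V}^{\circ}(u_*\mathcal{M}^r)$ over a stack whose universal variety is the \emph{base} $P$ of the cover --- not an immersion into $\mathcal{P}ol^\Lambda$ with variety the total space $X$, which is what Definition~\ref{def:canpol} requires. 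So even after your identification (which is essentially the content of Proposition~\ref{prop:alb2finiteproper}, proved \emph{after} the present proposition in the paper), you still owe the immersion of $\mathcal{G}_{g,p}$ into $\mathcal{P}ol^\Lambda$; and the natural polarization on $X$ to use for this is $[\omega_{X/S}]$ (your suggested $a^*\mathcal{O}(D)$ is just $\omega_X^{\otimes 2}$), which brings you back to the paper's direct argument. A secondary circularity issue: you form the relative Albanese torsor of an arbitrary object of $\mathcal{G}_{g,p}(S)$ at the outset, but the recalled construction needs a relatively ample line bundle on $X/S$ --- the paper avoids this by first establishing ampleness of $\omega$ and only then forming the Albanese, already working inside $\mathcal{P}ol^{\omega}$.
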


\begin{proof} 
We first prove that each $X \in \mathcal{G}_{g,p}(k)$ is canonically polarized. To show this, we may assume that $k$ is algebraically closed, since the canonical bundle is defined over the ground field. Then, by definition and Lemma \ref{lem:doublecovers}, there is an abelian variety $A$ over $k$ and a uniform cyclic cover $f: X \to A$ of degree $2$. So if we write $X=\Spec_A \mathcal{O}_A \oplus L$, then $\omega_{X/k}=f^*L^{-1}$ (see e.g. \cite[Prop. 3.5]{Wehler}) because $\omega_{A/k}=\mathcal{O}_A$.  Since $\omega_{X/k}$ is big, the line bundle  $L^{-1}$ is big as well. The claim then follows from the fact that any big line bundle on an abelian variety is ample (use \cite[Ex.~1.4.7, Cor.~1.5.18, and Thm.~2.2.16]{Laz04}).

Next consider the universal object $(u: \mathcal{U} \to \mathcal{P}ol^\Lambda,\lambda)$ and note that there is a locally closed subscheme $\mathcal{P}ol^{\omega} \subset \mathcal{P}ol^\Lambda$ where $\lambda|_{\mathcal{P}ol^{\omega}}$ represents the class of $\omega=\omega_{\mathcal{U}/\mathcal{P}ol^\Lambda}|_{\mathcal{P}ol^{\omega}}$ and is universal with respect to this property (see \cite[Cor.~4.3.2]{abramovich2011stable}). We can further restrict to the locally closed locus where the fibres of $u$ have dimension $g$ and $\pi_*\omega$ is locally free of rank $p$; call this stack $\mathcal{J}$. Finally, consider the map to the universal Albanese torsor $a: \mathcal{U}|_{\mathcal{J}} \to \text{Alb}^1_{\mathcal{U}/\mathcal{J}}$ (which exists by \cite[5.20, 5.21]{KlePic} and \cite[Thm.~3.3]{FGA})  
and observe that the locus where it is finite flat of degree $2$ is open in $\mathcal{J}$ (see \cite[Appendix E]{torstenalg}). Since isomorphisms preserve the canonical line bundle, this locally closed substack of $\mathcal{P}ol^{\omega}$ is equivalent to $\mathcal{G}_{g,p}$.
\end{proof}

Note that the universal object $\mathcal{U} \to \mathcal{G}_{g,p}$ is a relative uniform cyclic cover over its Albanese torsor since it is  finite locally free of degree $2$ (see \ref{lem:doublecovers}):
\[a: \mathcal{U} \to \text{Alb}^1_{\mathcal{U}/\mathcal{G}_{g,p}}\]
Taking the branch locus of $a$ yields a pair $(\text{Alb}^1_{\mathcal{U}/\mathcal{G}_{g,p}} \to \mathcal{G}_{g,p}, H)$ of a torsor under an abelian scheme and a divisor. By the following lemma and Proposition \ref{rem:smoothness}, this yields a natural morphism $\mathcal{G}_{g,p} \to \mathcal{AH}_{g,2^g(p-1), \mathbb{Q}}$ from the universal property of $\mathcal{AH}_{g,2^g(p-1), \mathbb{Q}}$.

\begin{lemma}
	$H$ is a relatively ample divisor of degree $2^{g}(p-1)$.
\end{lemma}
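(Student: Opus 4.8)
The statement is fibrewise, so the plan is to reduce to geometric fibres and argue there. Since $\mathrm{Alb}^1_{\mathcal{U}/\mathcal{G}_{g,p}} \to \mathcal{G}_{g,p}$ is proper and flat, relative ampleness of $\O(H)$ may be checked on geometric fibres, and the degree of the polarization $\lambda_{\O(H)}$, being the degree of a finite locally free group scheme, may likewise be computed there. After passing to a geometric point $\Spec k$ and choosing a $k$-point of the torsor, we are reduced to the following: $X$ is a smooth projective variety over an algebraically closed field $k$ of characteristic $0$, of dimension $g$, of general type, with $h^0(X,\omega_{X/k}) = p$, and $f \colon X \to A$ is its (based) Albanese map, finite flat of degree $2$; we must show that the branch divisor $D \subset A$ of $f$ has $\O_A(D)$ ample with $\chi(A,\O_A(D)) = 2^g(p-1)$.

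First I would record the structure of the double cover. By Lemma \ref{lem:doublecovers}, $f$ carries a canonical structure of uniform cyclic cover of degree $2$; writing $f_*\O_X = \O_A \oplus \mathcal{L}^{-1}$ as in Remark \ref{rem:linebundle}, we have $\O_A(D) = \mathcal{L}^{\otimes 2}$ and, exactly as in the proof of Proposition \ref{prop:G_{g,p}}, $\omega_{X/k} = f^*(\omega_{A/k} \otimes \mathcal{L}) = f^*\mathcal{L}$, using $\omega_{A/k} \cong \O_A$. Since $X$ is of general type, $\omega_{X/k}$ is big; as $f$ is finite and surjective, $\mathcal{L}$ is then big on $A$, and a big line bundle on an abelian variety is ample (e.g.\ \cite[Cor.~1.5.18, Thm.~2.2.16]{Laz04}). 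Hence $\O_A(D) = \mathcal{L}^{\otimes 2}$ is ample, which gives the relative ampleness of $H$.

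For the degree, the finiteness of $f$ and the projection formula give $h^0(X,\omega_{X/k}) = h^0(X, f^*\mathcal{L}) = h^0(A, \mathcal{L}\otimes f_*\O_X) = h^0(A,\mathcal{L}) + h^0(A,\O_A)$, since $\mathcal{L}\otimes f_*\O_X \cong \mathcal{L}\oplus\O_A$; hence $h^0(A,\mathcal{L}) = p-1$. As $\mathcal{L}$ is ample on the abelian variety $A$, its higher cohomology vanishes, so $\chi(A,\mathcal{L}) = p-1$; and by Riemann--Roch on abelian varieties $\chi(A,\mathcal{L}^{\otimes m}) = m^g\chi(A,\mathcal{L})$ for all $m$, whence $\chi(A,\O_A(D)) = \chi(A,\mathcal{L}^{\otimes 2}) = 2^g(p-1)$. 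This is exactly the statement that $\lambda_{\O_A(D)}$ is a polarization of degree $2^g(p-1)$, which completes the reduction. I do not anticipate a real obstacle here: the only point requiring (routine) care is the fibrewise reduction above, together with the remark that identifying the Albanese torsor with its underlying abelian variety via a base point alters neither ampleness nor the Euler characteristic of line bundles, hence neither of the two things to be checked.
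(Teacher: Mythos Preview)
Your argument is correct and follows essentially the same route as the paper: reduce to a geometric fibre, use the cyclic-cover structure to identify $\omega_X$ with the pullback of an ample line bundle on the abelian variety, compute its $h^0$ via the projection formula, and finish with Riemann--Roch on abelian varieties. The only minor wrinkle is that your sign convention for $\mathcal{L}$ is the inverse of the one in Remark~\ref{rem:linebundle} you cite (there the eigensheaf is called $\mathcal{L}$, so $\O(D)=\mathcal{L}^{-2}$), but this does not affect the computation.
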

\begin{proof}
To prove the result, we may work over an algebraically closed field of characteristic $0$ (see \cite[Thm.~1.7.8]{Laz04}). 
Recall from the proof of Proposition \ref{prop:G_{g,p}} that $f: X=\Spec_A \mathcal{O}_A \oplus L \to A$ for some line bundle $L$ for which $L^{-1}$ is ample.  Recall that $X$ has dimension $g$ and geometric genus $p=\mathrm{h}^0(X,\omega_X)$. Let $H \subset A$ denote the branch locus of $f$ and note that $\mathcal{O}(H)=L^{-2}$. Then $\omega_X=f^*(\omega_A \otimes L^{-1})=f^*L^{-1}$ because $\omega_A=\mathcal{O}_A$. Therefore, by pushing forward and using the push-pull formula, we have
\[p=\mathrm{h}^0(\omega_X)=\mathrm{h}^0(f_*\omega_X)= \mathrm{h}^0(L^{-1} \otimes (\mathcal{O}_A \oplus L))= \mathrm{h}^0(\mathcal{O}_A) +\mathrm{h}^0(L^{-1})=1+\mathrm{h}^0(L^{-1}).\]
 
This gives $h^0(L^{-1})=p-1$. Now, we want to calculate the degree of $\mathcal{O}(H)=L^{-2}$. By \cite[Thm.~V.13.3]{CornellSilverman} this is the same as $\chi(L^{-2})$. Recalling that ample line bundles on abelian varieties have vanishing higher cohomology \cite[Prop.~6.13]{GIT}, the Riemann-Roch theorem \cite[Thm.~V.13.3]{CornellSilverman} gives
\[\chi(L^{-2})=(L^{-2})^g/g!=2^g(L^{-1})^g/g!=2^gh^0(L^{-1})\]
(here $(M)^g$ denotes the top self-intersection of the line bundle $M$). Putting it all together, we have that the degree of the branch locus of $f$ is $2^{g}(p-1)$.
\end{proof} 

\begin{proposition} \label{prop:alb2finiteproper} The morphism $\mathcal{G}_{g,p} \to \mathcal{AH}_{g,2^g(p-1), \mathbb{Q}}$ is proper and  \'etale. \end{proposition}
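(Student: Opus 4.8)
The strategy is to identify the morphism $\mathcal{G}_{g,p} \to \mathcal{AH}_{g,2^g(p-1),\mathbb{Q}}$ with (the restriction of) the morphism $\mathcal{X}_2 \to \mathcal{X}$ from Theorem \ref{thm:properetale}, where $\mathcal{X}$ is a suitable moduli stack of divisorial pairs built out of $\mathcal{AH}^{\mathrm{sm}}_{g,2^g(p-1),\mathbb{Q}}$. First I would observe that, by the lemma just proved together with Proposition \ref{rem:smoothness}, the construction $X \mapsto (\mathrm{Alb}^1_{\mathcal{U}/\mathcal{G}_{g,p}}, H)$ lands in the \emph{smooth} abelian hypersurface stack $\mathcal{AH}^{\mathrm{sm}}_{g,2^g(p-1),\mathbb{Q}}$: the Albanese torsor $\mathrm{Alb}^1$ is smooth proper over the base, $H$ is its branch divisor, and $X$ — being smooth over the base — forces $H$ to be smooth by Proposition \ref{rem:smoothness} (applied after noting $2$ is invertible over $\mathbb{Q}$). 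Conversely, given a smooth abelian hypersurface $(P,H)$ with $H$ of degree $2^g(p-1)$, Remark \ref{rem:rth_root} and Lemma \ref{lem:doublecovers} produce the double cover; the key point is that on an abelian variety $P$ the line bundle $\mathcal{O}_P(H)$ is \emph{always} divisible by $2$ over the algebraic closure, since $\mathrm{Pic}\,P / 2\,\mathrm{Pic}\,P$ injects into $(\mathbb{Z}/2)^{\rho} \cdot (\text{something})$ — more precisely $H$ is algebraically equivalent to a symmetric divisor, and symmetric ample divisors on abelian varieties differ from twice a line bundle by a $2$-torsion point, which is absorbed after base change; this is exactly the $2$-divisibility hypothesis. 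So $\mathcal{AH}^{\mathrm{sm}}_{g,2^g(p-1),\mathbb{Q}}$ is $2$-divisible in the sense preceding Theorem \ref{thm:properetale}.

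Next I would check that the universal double cover $\mathcal{X}_2$ of $\mathcal{X} = \mathcal{AH}^{\mathrm{sm}}_{g,2^g(p-1),\mathbb{Q}}$ (as produced by Definition \ref{def:universalbranch}) is canonically equivalent, over $\mathcal{X}$, to $\mathcal{G}_{g,p}$. In one direction, an object of $\mathcal{X}_2$ is a relative uniform cyclic cover $Y \to P$ of degree $2$ with $P$ an abelian-torsor family and $Y$ necessarily smooth proper with connected fibres (Proposition \ref{rem:smoothness}); by the adjunction computation $\omega_{Y} = f^*(\omega_P \otimes L^{-1}) = f^* L^{-1}$ with $L^{-1}$ ample, so the fibres are of general type with geometric genus $1 + h^0(L^{-1}) = p$, and $Y \to P$ is finite flat of degree $2$; the universal property of the Albanese (recalled just before Theorem \ref{thm:sawinhyp}) shows $P = \mathrm{Alb}^1_{Y}$, so this is an object of $\mathcal{G}_{g,p}$. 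In the other direction one uses Lemma \ref{lem:doublecovers} (uniqueness of the cyclic-cover structure on a degree-$2$ finite flat map between smooth schemes, $2$ invertible) exactly as in the proofs of Propositions \ref{prop:delpezzos} and \ref{prop:k3s}: the Albanese map $X \to \mathrm{Alb}^1_X$ acquires a unique $\mu_2$-equivariant structure, giving an object of $\mathcal{X}_2$, and Remark \ref{rem:doublecovers} guarantees that morphisms are automatically compatible. These two constructions are mutually inverse, so $\mathcal{G}_{g,p} \simeq \mathcal{X}_2$ over $\mathcal{X}$, and the morphism in the statement is identified with $\mathcal{X}_2 \to \mathcal{X}$.

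Finally, properness and étaleness follow immediately from Theorem \ref{thm:properetale}(2),(3): the morphism $\mathcal{X}_2 \to \mathcal{X}$ is proper and quasi-finite in general, and étale because $2 \in \mathbb{Q}^\times = \mathcal{O}_{\mathrm{Spec}\,\mathbb{Q}}^\times$. (One could additionally record that $\mathcal{X}_2 \to \mathcal{X}$ is surjective by Theorem \ref{thm:properetale}(4) together with the $2$-divisibility just established, though this is not needed for the statement.) The main obstacle is the bookkeeping in the equivalence $\mathcal{G}_{g,p} \simeq \mathcal{X}_2$: one must be careful that the "polarizing line bundle" data on the abelian-torsor side (the pushforward of $\mathcal{O}_P(H)$ being locally free of the right rank and compatible with base change, as required in Definitions \ref{def:pairs} and \ref{def:universalbranch}) genuinely holds here — this is where Proposition \ref{prop:Hpairs} and the cohomology-and-base-change properties of ample line bundles on abelian schemes (\cite[Prop.~6.13]{GIT}) are invoked — and that the locally-closed-substack descriptions of $\mathcal{G}_{g,p}$ from Proposition \ref{prop:G_{g,p}} and of $\mathcal{AH}^{\mathrm{sm}}$ are compatible under the Albanese construction. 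None of this is deep, but it requires threading several earlier results together correctly.
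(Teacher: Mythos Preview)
Your proposal is correct and follows essentially the same strategy as the paper: identify $\mathcal{G}_{g,p}$ with the universal degree-$2$ cyclic-cover stack $\mathcal{X}_2$ via the Albanese map in one direction and the forgetful functor in the other (the key point being that for a degree-$2$ cover $X \to P$ the factorization $X \to \mathrm{Alb}^1_{X/S} \to P$ forces $\mathrm{Alb}^1_{X/S} \cong P$ by degree count), then invoke Theorem~\ref{thm:properetale}. Your extra verification that the forgetful functor actually lands in $\mathcal{G}_{g,p}$ (general type, genus $p$) and your discussion of $2$-divisibility are more thorough than the paper's treatment but not required for the statement as written.
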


\begin{proof} Since $a$ realizes $\mathcal{U}$ as a relative uniform cyclic cover of degree $2$ over a torsor under an abelian scheme, we obtain a morphism $\mathcal{G}_{g,p}$ to the stack $\mathcal{X}_2$ of degree $2$ cyclic covers associated to the moduli stack of pairs $\mathcal{AH}_{g,2^g(p-1)}$ (see Theorem \ref{thm:properetale}). We claim that the functor
\[F: \mathcal{G}_{g,p} \to \mathcal{X}_2, \quad (X \to S) \, \mapsto \,  (X \to \text{Alb}^1_{X/S} \to S) \] 
is an equivalence of categories, with inverse given by the forgetful functor
\[G: \mathcal{X}_2 \to \mathcal{G}_{g,p}, \quad (X \to P \to S) \, \mapsto \, (X \to S).\]
It is clear that $G \circ F=\text{id}$ and note that the universal property of the Albanese torsor yields a natural transformation $F \circ G \to \text{id}$. This is an isomorphism because, if $X \to P \to S$ is a relative uniform cyclic cover of degree $2$, then $P$ is isomorphic to the Albanese torsor of $X/S$. Indeed, there is a factorization $X \to \text{Alb}^1_{X/S} \to P$ over $S$ and since $X \to \text{Alb}^1_{X/S}$ is finite but not an isomorphism, and $X \to P$ is a double cover, it follows that $\text{Alb}^1_{X/S} \to P$ is an isomorphism. Since $\mathcal{X}_2$ is proper and \'etale over $\mathcal{AH}_{g,2^g(p-1)}$ by Theorem \ref{thm:properetale}, this concludes the proof.
\end{proof}

We finally prove the following more precise version of Theorem \ref{mainthm:gentype}.

\begin{theorem} \label{thm:Ggp}
	Let $p,g \in \N$ with $g=2$ or $g \geq 4$. There exists a finite set of primes $T$
	and a model $\mathcal{G}_{g,p,\Z[T^{-1}]}$ for $\mathcal{G}_{g,p}$ over $\Z[T^{-1}]$
	with the following property.
	
	Let $K$ be a number field and $S$ a finite set of finite places of $K$ containing all places
	above $T$. Then $\pi_0(\mathcal{G}_{g,p,\Z[T^{-1}]}(\O_{K,S}))$ is finite.
\end{theorem}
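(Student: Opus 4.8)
The plan is to combine the structural results on $\mathcal{G}_{g,p}$ established above with the finiteness statement for abelian hypersurfaces (Theorem \ref{thm:sawinhyp}) via the twisting lemma. First I would fix the model: by standard spreading-out arguments, there is a finite set of primes $T$ such that $\mathcal{G}_{g,p}$ extends to a finitely presented algebraic stack $\mathcal{G}_{g,p,\Z[T^{-1}]}$ over $\Z[T^{-1}]$, and such that the morphism $F \colon \mathcal{G}_{g,p} \to \mathcal{AH}_{g,2^g(p-1),\Q}$ of Proposition \ref{prop:alb2finiteproper} (which is an equivalence onto the degree-$2$ cyclic covering stack $\mathcal{X}_2$ associated to $\mathcal{AH}_{g,2^g(p-1)}$) extends to a proper \'etale morphism over $\Z[T^{-1}]$; we may enlarge $T$ to include $2$ so that Proposition \ref{rem:smoothness} and Lemma \ref{lem:doublecovers} apply integrally. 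Enlarging $T$ further if necessary, we may also assume $\mathcal{AH}_{g,2^g(p-1),\Z[T^{-1}]}^{sm}$ is defined and the identification $\mathcal{G}_{g,p,\Z[T^{-1}]} \simeq (\mathcal{AH}_{g,2^g(p-1),\Z[T^{-1}]}^{sm})_2$ holds over $\Z[T^{-1}]$.

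Next I would run the arithmetic hyperbolicity argument over $\Qbar$. By Theorem \ref{thm:sawinhyp} together with the twisting lemma (Theorem \ref{thm:twists}) and Proposition \ref{prop:Hseparated}, the stack $\mathcal{AH}_{g,2^g(p-1),\Qbar}^{sm}$ is arithmetically hyperbolic over $\Qbar$. Since $\mathcal{G}_{g,p,\Qbar} \simeq (\mathcal{AH}_{g,2^g(p-1),\Qbar}^{sm})_2$ and, by Theorem \ref{thm:properetale}, the morphism $(\mathcal{AH}_{g,2^g(p-1),\Qbar}^{sm})_2 \to \mathcal{AH}_{g,2^g(p-1),\Qbar}^{sm}$ is proper \'etale (and quasi-finite), Lemma \ref{lem:qf} gives that $\mathcal{G}_{g,p,\Qbar}$ is arithmetically hyperbolic over $\Qbar$. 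Alternatively one can invoke Corollary \ref{cor:arithmetic_dp} after checking $r$-divisibility, but the route through Lemma \ref{lem:qf} is cleaner here since $F$ is actually an equivalence onto the cyclic covering stack.

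To descend to the ground ring, I would verify that $\mathcal{G}_{g,p,\Z[T^{-1}]}$ has finite diagonal: indeed $F$ is proper \'etale (in particular proper quasi-finite) and $\mathcal{AH}_{g,2^g(p-1)}^{sm}$ has finite diagonal by Proposition \ref{prop:Hseparated}, so Lemma \ref{lem:finitediagonal} applies. Now let $K$ be a number field and $S$ a finite set of finite places containing all places above $T$, so $\O_{K,S}$ is an integrally closed finitely presented $\Z[T^{-1}]$-algebra. Since $\mathcal{G}_{g,p,\Qbar}$ is arithmetically hyperbolic over $\Qbar$ and $\mathcal{G}_{g,p,\Z[T^{-1}]}$ is a model with finite diagonal, Theorem \ref{thm:twists} yields that $\pi_0(\mathcal{G}_{g,p,\Z[T^{-1}]}(\O_{K,S}))$ is finite, as desired.

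The main obstacle is the first paragraph: making the passage from the characteristic-zero constructions (the Albanese torsor, the equivalence $\mathcal{G}_{g,p}\simeq \mathcal{X}_2$, the smoothness criterion for cyclic covers) to an honest model over a ring $\Z[T^{-1}]$ with all the relevant morphisms still proper \'etale. One must take care that the formation of the relative Albanese and the identification of $\mathcal{U}\to\text{Alb}^1$ with a double cover are compatible with the chosen integral model after possibly enlarging $T$; here the references on spreading out (e.g. \cite[Appendix E]{torstenalg} and the Noetherian approximation results cited in the proof of Lemma \ref{lem:picard}) together with $2$ being invertible do the work, but this bookkeeping is the delicate part. Everything afterwards is a formal consequence of the already-established properties of the cyclic covering stacks and the twisting lemma.
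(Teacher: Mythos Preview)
Your proposal is correct and follows essentially the same route as the paper: spread out the proper \'etale morphism of Proposition~\ref{prop:alb2finiteproper} to a model over $\Z[T^{-1}]$, inherit finite diagonal from $\mathcal{AH}^{sm}_{g,2^g(p-1)}$ (Proposition~\ref{prop:Hseparated} and Lemma~\ref{lem:finitediagonal}), deduce arithmetic hyperbolicity of $\mathcal{G}_{g,p,\Qbar}$ from that of $\mathcal{AH}^{sm}_{g,2^g(p-1),\Qbar}$ via Lemma~\ref{lem:qf}, and conclude with the twisting lemma. The paper handles the spreading-out step by a single citation (\cite[Prop.~B.3]{Rydh2}) rather than your more detailed discussion of the Albanese and the role of $2$, but the logic is the same; your identification of this bookkeeping as the only delicate point is accurate.
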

\begin{proof}
	By Proposition \ref{prop:alb2finiteproper} and
	spreading out \cite[Prop.~B.3]{Rydh2}, there exists a model 
	$\mathcal{G}_{g,p,\Z[T^{-1}]}$ for $\mathcal{G}_{g,p}$ over $\Z[T^{-1}]$ for some
	finite set of primes $T$ together with a morphism 
	$\mathcal{G}_{g,p,\Z[T^{-1}]} \to \mathcal{AH}_{g,2^g(p-1), \Z[T^{-1}]}$ which is proper
	\'etale. Increasing $T$ again if necessary, by Proposition \ref{prop:Hseparated}
	we may assume that $\mathcal{AH}_{g,2^g(p-1), \Z[T^{-1}]}$ has finite diagonal, hence
	so does $\mathcal{G}_{g,p,\Z[T^{-1}]}$. The result now follows from Lemma \ref{lem:qf}
	and  Theorems \ref{thm:twists} and \ref{thm:sawinhyp}.
\end{proof}

	\bibliography{refsci}{}

\def\cprime{$'$}
\begin{thebibliography}{10}

\bibitem{abramovich2011stable}
D.~Abramovich and B.~Hassett.
\newblock Stable varieties with a twist.
\newblock In {\em Classification of algebraic varieties}, EMS Ser. Congr. Rep.,
  pages 1--38. Eur. Math. Soc., Z\"{u}rich, 2011.

\bibitem{Andre}
Y.~Andr{\'e}.
\newblock On the {S}hafarevich and {T}ate conjectures for hyper-{K}\"ahler
  varieties.
\newblock {\em Math. Ann.}, 305(2):205--248, 1996.

\bibitem{ArsieVistoli}
A.~Arsie and A.~Vistoli.
\newblock Stacks of cyclic covers of projective spaces.
\newblock {\em Compos. Math.}, 140(3):647--666, 2004.

\bibitem{BenoistThesis}
O.~Benoist.
\newblock Espace de modules d'intersections compl\`etes lisses.
\newblock {\em Ph.D. thesis}.

\bibitem{Ben13}
O.~Benoist.
\newblock S\'eparation et propri\'et\'e de {D}eligne-{M}umford des champs de
  modules d'intersections compl\`etes lisses.
\newblock {\em J. Lond. Math. Soc. (2)}, 87(1):138--156, 2013.

\bibitem{Bergh}
D.~Bergh.
\newblock Motivic classes of some classifying stacks.
\newblock {\em J. Lond. Math. Soc. (2)}, 93(1):219--243, 2016.

\bibitem{Chang}
H.~C. Chang.
\newblock On plane algebraic curves.
\newblock {\em Chinese J. Math.}, 6(2):185--189, 1978.

\bibitem{CornellSilverman}
G.~Cornell and J.~H. Silverman, editors.
\newblock {\em Arithmetic geometry}.
\newblock Springer-Verlag, New York, 1986.

\bibitem{EHKV}
D.~Edidin, B.~Hassett, A.~Kresch, and A.~Vistoli.
\newblock Brauer groups and quotient stacks.
\newblock {\em Amer. J. Math.}, 123(4):761--777, 2001.

\bibitem{Faltings2}
G.~Faltings.
\newblock Endlichkeitss\"atze f\"ur abelsche {V}ariet\"aten \"uber
  {Z}ahlk\"orpern.
\newblock {\em Invent. Math.}, 73(3):349--366, 1983.

\bibitem{faltingschai}
G.~Faltings and C.-L. Chai.
\newblock {\em Degeneration of abelian varieties}, volume~22.
\newblock Springer Science \& Business Media, 2013.

\bibitem{GilleMoretBailly}
P.~Gille and L.~Moret-Bailly.
\newblock Actions alg\'ebriques de groupes arithm\'etiques.
\newblock In {\em Torsors, \'etale homotopy and applications to rational
  points}, volume 405 of {\em London Math. Soc. Lecture Note Ser.}, pages
  231--249. Cambridge Univ. Press, Cambridge, 2013.

\bibitem{GS17}
P.~Gille and T.~Szamuely.
\newblock {\em Central simple algebras and {G}alois cohomology}, volume 165 of
  {\em Cambridge Studies in Advanced Mathematics}.
\newblock Cambridge University Press, Cambridge, 2017.

\bibitem{torstenalg}
U.~G{\"o}rtz and T.~Wedhorn.
\newblock {\em Algebraic Geometry: Part I: Schemes. With Examples and
  Exercises}.
\newblock Advanced Lectures in Mathematics. Vieweg+Teubner Verlag, 2010.

\bibitem{FGA}
A.~Grothendieck.
\newblock {\em Fondements de la g\'{e}om\'{e}trie alg\'{e}brique}.
\newblock Secr\'{e}tariat math\'{e}matique, Paris, 1962.

\bibitem{hall2015general}
J.~Hall and D.~Rydh.
\newblock General {H}ilbert stacks and {Q}uot schemes.
\newblock {\em Michigan Math. J.}, 64(2):335--347, 2015.

\bibitem{hall2019coherent}
J.~Hall and D.~Rydh.
\newblock Coherent {T}annaka duality and algebraicity of hom-stacks.
\newblock {\em Algebra \& Number Theory}, 13(7):1633--1675, 2019.

\bibitem{HMP}
J.~Harris, B.~Mazur, and R.~Pandharipande.
\newblock Hypersurfaces of low degree.
\newblock {\em Duke Math. J.}, 95(1):125--160, 1998.

\bibitem{hartshorne1966ample}
R.~Hartshorne.
\newblock Ample vector bundles.
\newblock {\em Publications Math{\'e}matiques de l'IH{\'E}S}, 29:63--94, 1966.

\bibitem{Ivinskis}
K.~Ivinskis.
\newblock A variational {T}orelli theorem for cyclic coverings of high degree.
\newblock {\em Compositio Math.}, 85(2):201--228, 1993.

\bibitem{JLitt}
A.~Javanpeykar and D.~Litt.
\newblock Integral points on algebraic subvarieties of period domains: from
  number fields to finitely generated fields.
\newblock {\em arXiv:1907.13536}.

\bibitem{JL}
A.~Javanpeykar and D.~Loughran.
\newblock Complete intersections: {M}oduli, {T}orelli, and good reduction.
\newblock {\em Math. Ann.}, 368(3-4):1191--1225, 2017.

\bibitem{JLFano}
A.~Javanpeykar and D.~Loughran.
\newblock Good reduction of {F}ano threefolds and sextic surfaces.
\newblock {\em Ann. Sc. Norm. Super. Pisa Cl. Sci. (5)}, 18(2):509--535, 2018.

\bibitem{JLalg}
A.~Javanpeykar and D.~Loughran.
\newblock Arithmetic hyperbolicity and a stacky {C}hevalley-{W}eil theorem.
\newblock {\em J. Lond. Math. Soc. (2)}, 103(3):846--869, 2021.

\bibitem{JM}
A.~Javanpeykar and S.~Mathur.
\newblock Smooth hypersurfaces in abelian varieties over arithmetic rings,
  2022.

\bibitem{KlePic}
S.~L. Kleiman.
\newblock The {P}icard scheme.
\newblock In {\em Fundamental algebraic geometry}, volume 123 of {\em Math.
  Surveys Monogr.}, pages 235--321. Amer. Math. Soc., Providence, RI, 2005.

\bibitem{lawrence2020shafarevich}
B.~Lawrence and W.~Sawin.
\newblock The {S}hafarevich conjecture for hypersurfaces in abelian varieties.
\newblock {\em arxiv:2004.09046}.

\bibitem{VL18}
B.~Lawrence and A.~Venkatesh.
\newblock Diophantine problems and {$p$}-adic period mappings.
\newblock {\em Invent. Math.}, 221(3):893--999, 2020.

\bibitem{Laz04}
R.~Lazarsfeld.
\newblock {\em Positivity in algebraic geometry. {I}}, volume~48 of {\em
  Ergebnisse der Mathematik und ihrer Grenzgebiete. 3. Folge.}
\newblock Springer-Verlag, Berlin, 2004.

\bibitem{lieblich2008twisted}
M.~Lieblich.
\newblock Twisted sheaves and the period-index problem.
\newblock {\em Compositio Mathematica}, 144(1):1--31, 2008.

\bibitem{Man03}
M.~Manetti.
\newblock Surfaces of {A}lbanese general type and the {S}everi conjecture.
\newblock {\em Math. Nachr.}, 261/262:105--122, 2003.

\bibitem{MatMum}
T.~Matsusaka and D.~Mumford.
\newblock Two fundamental theorems on deformations of polarized varieties.
\newblock {\em Amer. J. Math.}, 86:668--684, 1964.

\bibitem{Mori79}
S.~Mori.
\newblock Projective manifolds with ample tangent bundles.
\newblock {\em Ann. of Math. (2)}, 110(3):593--606, 1979.

\bibitem{GIT}
D.~Mumford, J.~Fogarty, and F.~Kirwan.
\newblock {\em Geometric invariant theory}, volume~34 of {\em Ergebnisse der
  Mathematik und ihrer Grenzgebiete (2)}.
\newblock Springer-Verlag, Berlin, third edition, 1994.

\bibitem{abelianolsson}
M.~Olsson.
\newblock {\em Compactifying moduli spaces for abelian varieties}, volume 1958
  of {\em Lecture Notes in Mathematics}.
\newblock Springer-Verlag, Berlin, 2008.

\bibitem{olssonabelian2}
M.~Olsson.
\newblock Compactifications of moduli of abelian varieties: an introduction.
\newblock In {\em Current developments in algebraic geometry}, volume~59 of
  {\em Math. Sci. Res. Inst. Publ.}, pages 295--348. Cambridge Univ. Press,
  Cambridge, 2012.

\bibitem{OlssonBook}
M.~Olsson.
\newblock {\em Algebraic spaces and stacks}, volume~62 of {\em American
  Mathematical Society Colloquium Publications}.
\newblock American Mathematical Society, Providence, RI, 2016.

\bibitem{Poo17}
B.~Poonen.
\newblock {\em Rational Points on Varieties}, volume 186 of {\em Graduate
  Studies in Mathematics}.
\newblock American Mathematical Society, Providence, RI, 2017.

\bibitem{poonen1999cassels}
B.~Poonen and M.~Stoll.
\newblock The {C}assels-{T}ate pairing on polarized abelian varieties.
\newblock {\em Ann. of Math. (2)}, 150(3):1109--1149, 1999.

\bibitem{Popp}
H.~Popp.
\newblock {\em Moduli theory and classification theory of algebraic varieties}.
\newblock Lecture Notes in Mathematics, Vol. 620. Springer-Verlag, Berlin-New
  York, 1977.

\bibitem{Rizov}
J.~Rizov.
\newblock Moduli stacks of polarized {$K3$} surfaces in mixed characteristic.
\newblock {\em Serdica Math. J.}, 32(2-3):131--178, 2006.

\bibitem{rydh2011canonical}
D.~Rydh.
\newblock The canonical embedding of an unramified morphism in an {\'e}tale
  morphism.
\newblock {\em Mathematische Zeitschrift}, 268(3-4):707--723, 2011.

\bibitem{Rydh2}
D.~Rydh.
\newblock Noetherian approximation of algebraic spaces and stacks.
\newblock {\em J. Algebra}, 422:105--147, 2015.

\bibitem{rydh2015noetherian}
David Rydh.
\newblock Noetherian approximation of algebraic spaces and stacks.
\newblock {\em Journal of Algebra}, 422:105--147, 2015.

\bibitem{Scholl}
A.~J. Scholl.
\newblock A finiteness theorem for del {P}ezzo surfaces over algebraic number
  fields.
\newblock {\em J. London Math. Soc. (2)}, 32(1):31--40, 1985.

\bibitem{sernesidef}
E.~Sernesi.
\newblock {\em Deformations of algebraic schemes}, volume 334 of {\em
  Grundlehren der Mathematischen Wissenschaften}.
\newblock Springer-Verlag, Berlin, 2006.

\bibitem{Shaf1962}
I.~R. Shafarevich.
\newblock Algebraic number fields.
\newblock In {\em Proc. {I}nternat. {C}ongr. {M}athematicians ({S}tockholm,
  1962)}, pages 163--176. Inst. Mittag-Leffler, Djursholm, 1963.

\bibitem{Silverman}
J.~H. Silverman.
\newblock {\em The arithmetic of elliptic curves}, volume 106 of {\em Graduate
  Texts in Mathematics}.
\newblock Springer, Dordrecht, second edition, 2009.

\bibitem{stacks-project}
The {Stacks Project Authors}.
\newblock \emph{{S}tacks {P}roject}.
\newblock http://stacks.math.columbia.edu, 2020.

\bibitem{dJS10}
J.~Starr and J.~de~Jong.
\newblock Almost proper {GIT}-stacks and discriminant avoidance.
\newblock {\em Doc. Math.}, 15:957--972, 2010.

\bibitem{ViehwegZuoCM1}
E.~Viehweg and K.~Zuo.
\newblock Complex multiplication, {G}riffiths-{Y}ukawa couplings, and rigidity
  for families of hypersurfaces.
\newblock {\em J. Algebraic Geom.}, 14(3):481--528, 2005.

\bibitem{Wehler}
J.~Wehler.
\newblock Cyclic coverings: deformation and {T}orelli theorem.
\newblock {\em Math. Ann.}, 274(3):443--472, 1986.

\end{thebibliography}
	\bibliographystyle{plain}
	
\end{document}